\title{The fixed point and the Craig interpolation properties for sublogics of $\mathbf{IL}$}
\author{Sohei Iwata, Taishi Kurahashi and Yuya Okawa}
\date{}
\theoremstyle{plain}
\newtheorem{thm}{Theorem}[section]
\newtheorem{lem}[thm]{Lemma}
\newtheorem{prop}[thm]{Proposition}
\newtheorem{cor}[thm]{Corollary}
\newtheorem{fact}[thm]{Fact}
\newtheorem{prob}[thm]{Problem}
\newtheorem{cl}{Claim}
\theoremstyle{definition}
\newtheorem{defn}[thm]{Definition}
\newcommand{\GL}{\mathbf{GL}}
\newcommand{\CL}{\mathbf{CL}}
\newcommand{\IL}{\mathbf{IL}}
\newcommand{\rank}{\mathrm{rank}}
\newcommand{\Sub}{\mathrm{Sub}}
\newcommand{\PSub}{\mathrm{PSub}}
\newcommand{\G}[1]{\mathbf{L#1}}
\newcommand{\J}[1]{\mathbf{J#1}}
\newcommand{\R}[1]{\mathbf{R#1}}
\newcommand{\seq}[1]{\langle#1\rangle}
\begin{document}

\maketitle

\begin{abstract}
We study the fixed point property and the Craig interpolation property for sublogics of the interpretability logic $\IL$. 
We provide a complete description of these sublogics concerning the uniqueness of fixed points, the fixed point property and the Craig interpolation property. 
\end{abstract}

\section{Introduction}\label{Sec:Intro}

De Jongh and Sambin's fixed point theorem \cite{Sam76} for the modal propositional logic $\GL$ is one of notable results of modal logical investigation of formalized provability. 
For any modal formula $A$, let $v(A)$ be the set of all propositional variables contained in $A$. 
A logic $L$ is said to have the fixed point property (FPP) if for any modal formula $A(p)$ in which the propositional variable $p$ appears only in the scope of $\Box$, there exists a modal formula $B$ such that $v(B) \subseteq v(A) \setminus \{p\}$ and $L \vdash B \leftrightarrow A(B)$. 
De Jongh and Sambin's theorem states that $\GL$ has FPP, and this is understood as a counterpart of the fixed point theorem in formal arithmetic (see \cite{Boo93}). 
Bernardi \cite{Ber76} also proved the uniqueness of fixed points (UFP) for $\GL$. 

A logic $L$ is said to have the Craig interpolation property (CIP) if for any formulas $A$ and $B$, if $L \vdash A \to B$, then there exists a formula $C$ such that $v(C) \subseteq v(A) \cap v(B)$, $L \vdash A \to C$ and $L \vdash C \to B$. 
Smory\'nski \cite{Smo78} and Boolos \cite{Boo79} independently proved that $\GL$ has CIP. 
Smory\'nski also made an important observation that FPP for $\GL$ follows from CIP and UFP.  

The interpretability logic $\IL$ is an extension of $\GL$ in the language of $\GL$ equipped with the binary modal operator $\rhd$, where the modal formula $A \rhd B$ is read as ``$T + B$ is relatively interpretable in $T + A$''. 
It is natural to ask whether $\IL$ also has the properties that hold for $\GL$.
Indeed, de Jongh and Visser \cite{DeJVis91} proved UFP for $\IL$ and that $\IL$ has FPP. 
Also Areces, Hoogland and de Jongh \cite{AHD01} proved that $\IL$ has CIP. 

Ignatiev \cite{Ign91} introduced the sublogic $\CL$ of $\IL$ as a base logic of the modal logical investigation of the notion of partial conservativity, and proved that $\CL$ is complete with respect to relational semantics (that is, regular Veltman semantics). 
Kurahashi and Okawa \cite{KO20} also introduced several sublogics of $\IL$, and showed the completeness and the incompleteness of these sublogics with respect to relational semantics. 

In this paper, we investigate UFP, FPP and CIP for sublogics of $\IL$ shown in Figure \ref{Fig1}. 

\begin{figure}[h]\label{Fig1}
\centering
\begin{tikzpicture}
\node (IL-) at (0,1.2) {$\IL^-$};
\node (IL5) at (3,0) {$\IL^-(\J{5})$};
\node (IL1) at (3,1.2) {$\IL^-(\J{1})$};
\node (IL4) at (3,2.4){$\IL^-(\J{4}_+)$};
\node (IL15) at (6,0){$\IL^-(\J{1}, \J{5})$};
\node (IL45) at (6,1.2){$\IL^-(\J{4}_+, \J{5})$};
\node (IL14) at (6,2.4){$\IL^-(\J{1}, \J{4}_+)$};
\node (IL2) at (6,3.6){$\IL^-(\J{2}_+)$};
\node (IL145) at (9,1.2){$\IL^-(\J{1}, \J{4}_+, \J{5})$};
\node (IL25) at (9,2.4){$\IL^-(\J{2}_+, \J{5})$};
\node (CL) at (9,3.6){$\CL$};
\node (IL) at (12,2.4){$\IL$};
\draw [-] (IL5)--(IL-);
\draw [-] (IL1)--(IL-);
\draw [-] (IL4)--(IL-);
\draw [-] (IL15)--(IL5);
\draw [-] (IL45)--(IL5);
\draw [-] (IL15)--(IL1);
\draw [-] (IL14)--(IL1);
\draw [-] (IL45)--(IL4);
\draw [-] (IL14)--(IL4);
\draw [-] (IL2)--(IL4);
\draw [-] (IL145)--(IL15);
\draw [-] (IL145)--(IL45);
\draw [-] (IL25)--(IL45);
\draw [-] (IL145)--(IL14);
\draw [-] (CL)--(IL14);
\draw [-] (IL25)--(IL2);
\draw [-] (CL)--(IL2);
\draw [-] (IL)--(IL145);
\draw [-] (IL)--(IL25);
\draw [-] (IL)--(CL);
\end{tikzpicture}
\caption{Sublogics of $\IL$}
\end{figure}

Moreover, for technical reasons, we introduce and investigate the notions of $\ell$UFP and $\ell$FPP that are restricted versions of UFP and FPP with respect to some particular forms of formulas, respectively. 
Table \ref{Tab1} summarizes a complete description of these sublogics concerning $\ell$UFP, UFP, $\ell$FPP, FPP and CIP. 

\begin{table}[h]\label{Tab1}
\centering
\begin{tabular}{|l||c|c|c|c|c|}
\hline
 & $\ell$UFP & UFP & $\ell$FPP & FPP & CIP \\
\hline
\hline
$\IL^-$ & $\checkmark$ & $\times$ & $\times$ & $\times$ & $\times$ \\
\hline
$\IL^-(\J{1})$ & $\checkmark$ & $\times$ & $\times$ & $\times$ & $\times$ \\
\hline
$\IL^-(\J{5})$ & $\checkmark$ & $\times$ & $\times$ & $\times$ & $\times$ \\
\hline
$\IL^-(\J{1}, \J{5})$ & $\checkmark$ & $\times$ & $\times$ & $\times$ & $\times$ \\
\hline
$\IL^-(\J{4}_+)$ & $\checkmark$ & $\checkmark$ & $\times$ & $\times$ & $\times$ \\
\hline
$\IL^-(\J{1}, \J{4}_+)$ & $\checkmark$ & $\checkmark$ & $\times$ & $\times$ & $\times$ \\
\hline
$\IL^-(\J{2}_+)$ & $\checkmark$ & $\checkmark$ & $\times$ & $\times$ & $\times$ \\
\hline
$\CL$ & $\checkmark$ & $\checkmark$ & $\times$ & $\times$ & $\times$ \\
\hline
$\IL^-(\J{4}_+, \J{5})$ & $\checkmark$ & $\checkmark$ & $\checkmark$ & $\times$ & $\times$ \\
\hline
$\IL^-(\J{1}, \J{4}_+, \J{5})$ & $\checkmark$ & $\checkmark$ & $\checkmark$ & $\times$ & $\times$ \\
\hline
$\IL^-(\J{2}_+, \J{5})$ & $\checkmark$ & $\checkmark$ & $\checkmark$ & $\checkmark$ & $\checkmark$ \\
\hline
$\IL$ & $\checkmark$ & $\checkmark$ \cite{DeJVis91} & $\checkmark$ & $\checkmark$ \cite{DeJVis91} & $\checkmark$ \cite{AHD01} \\
\hline
\end{tabular}
\caption{$\ell$UFP, UFP, $\ell$FPP, FPP and CIP for sublogics of $\IL$}
\end{table}

The paper is organized as follows. 
In Section \ref{Sec:UFP}, we show that UFP holds for extensions of $\IL^-(\J{4}_+)$, and that UFP is not the case for sublogics of $\IL^-(\J{1}, \J{5})$. 
We also show that $\ell$UFP holds for extensions of $\IL^-$. 
In Section \ref{Sec:CIP}, we prove that the logic $\IL^-(\J{2}_+, \J{5})$ has CIP by modifying a semantical proof of CIP for $\IL$ by Areces, Hoogland and de Jongh. 
We also notice that CIP for $\IL$ easily follows from CIP for $\IL^-(\J{2}_+, \J{5})$. 
In Section \ref{Sec:FPP}, we observe that FPP for $\IL^-(\J{2}_+, \J{5})$ immediately follows from our results in the previous sections. 
Also we give a syntactical proof of FPP for $\IL^-(\J{2}_+, \J{5})$. 
Moreover, we prove that $\IL^-(\J{4}, \J{5})$ has $\ell$FPP. 
In Section \ref{Sec:CE}, we provide counter models of $\ell$FPP for $\CL$ and $\IL^-(\J{1}, \J{5})$ and a counter model of FPP for $\IL^-(\J{1}, \J{4}_+, \J{5})$. 
As a consequence, we also show that CIP is not the case for these sublogics except for $\IL^-(\J{2}_+, \J{5})$ and $\IL$.

\section{Preliminaries}\label{Sec:Prelim}

\subsection{$\IL$ and its sublogics}

The interpretability logic $\IL$ is a base logic of modal logical investigations of the notion of relative interpretability (see \cite{Vis88,Vis90}). 
The language of $\IL$ consists of propositional variables $p, q, \ldots$, the propositional constant $\bot$, the logical connective $\to$, the unary modal operator $\Box$ and the binary modal operator $\rhd$. 
Other logical connectives, the propositional constant $\top$ and the modal operator $\Diamond$ are introduced as usual abbreviations. 
The formulas of $\IL$ are generated by the following grammar: 
\[
	A :: =  \bot \mid p \mid A \to A \mid \Box A \mid A \rhd A. 
\]
For each formula $A$, let $\boxdot A \equiv A \land \Box A$. 

\begin{defn}
The axioms of the modal propositional logic $\IL$ are as follows: 
\begin{description}
	\item [$\G{1}$] All tautologies in the language of $\IL$; 
	\item [$\G{2}$] $\Box(A \to B) \to (\Box A \to \Box B)$; 
	\item [$\G{3}$] $\Box(\Box A \to A) \to \Box A$; 
	\item [$\J{1}$] $\Box (A \to B) \to A \rhd B$; 
	\item [$\J{2}$] $(A \rhd B) \land (B \rhd C) \to A \rhd C$; 
	\item [$\J{3}$] $(A \rhd C) \land (B \rhd C) \to (A \lor B) \rhd C$; 
	\item [$\J{4}$] $A \rhd B \to (\Diamond A \to \Diamond B)$; 
	\item [$\J{5}$] $\Diamond A \rhd A$. 
\end{description}
The inference rules of $\IL$ are Modus Ponens $\dfrac{A\ \ \ A \to B}{B}$ and Necessitation $\dfrac{A}{\Box A}$. 
\end{defn}

The conservativity logic $\CL$ is obtained from $\IL$ by removing the axiom scheme $\J{5}$, that was introduced by Ignatiev \cite{Ign91} as a base logic of modal logical investigations of the notion of partial conservativity.
Several other sublogics of $\IL$ were introduced in \cite{KO20}. 
The basis for these newly introduced logics is the logic $\IL^-$. 

\begin{defn}
The language of $\IL^-$ is that of $\IL$, and the axioms of $\IL^-$ are $\G{1}, \G{2}, \G{3}, \J{3}$ and $\J{6}$: $\Box A \leftrightarrow (\neg A) \rhd \bot$. 
The inference rules of $\IL^-$ are Modus Ponens, Necessitation, $\R{1}$ $\dfrac{A \to B}{C \rhd A \to C \rhd B}$ and $\R{2}$ $\dfrac{A \to B}{B \rhd C \to A \rhd C}$. 
\end{defn}

For schemata $\Sigma_1, \ldots, \Sigma_n$, let $\IL^-(\Sigma_1, \ldots, \Sigma_n)$ be the logic obtained by adding $\Sigma_1, \ldots, \Sigma_n$ as axiom schemata to $\IL^-$. 
The following schemata $\J{2}_+$ and $\J{4}_+$ were introduced in \cite{KO20} and \cite{Vis88}, respectively:  

\begin{description}
	\item [$\J{2}_+$] $(A \rhd (B \lor C)) \land (B \rhd C) \to A \rhd C$; 
	\item [$\J{4}_+$] $\Box (A \to B) \to (C \rhd A \to C \rhd B)$. 
\end{description}

In this paper, we mainly deal with logics consisting of some of the axiom schemata $\J{1}, \J{2}_+, \J{4}_+$ and $\J{5}$ (see Figure \ref{Fig1} in Section \ref{Sec:Intro}). 
Then we have the following proposition. 

\begin{prop}\label{Prop:KO}
Let $A$, $B$ and $C$ be any formulas. 
\begin{enumerate}
	\item $\IL^- \vdash \Box \neg A \to A \rhd B$. 
	\item $\IL^- \vdash \Box(A \to B) \to (B \rhd C \to A \rhd C)$. 
	\item $\IL^- \vdash (\neg A \land B) \rhd C \to (A \rhd C \to B \rhd C)$. 
	\item $\IL^-(\J{4}_+) \vdash \J{4}$. 
	\item $\IL^-(\J{2}_+) \vdash \J{2} \land \J{4}_+$. 
	\item $\IL^-(\J{2}_+) \vdash (A \rhd B) \land ((B \land \neg C) \rhd C) \to (A \rhd C)$. 
	\item $\IL^-(\J{1}) \vdash A \rhd A$. 
	\item $\CL$ is deductively equivalent to $\IL^-(\J{1}, \J{2}_+)$. 
	\item $\IL$ is deductively equivalent to $\IL^-(\J{1}, \J{2}_+, \J{5})$. 
\end{enumerate}
\end{prop}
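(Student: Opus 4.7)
The plan is to prove the nine items in the stated order, since later items build on earlier ones. Items 1--3 are internal lemmas of $\IL^-$ obtained from $\J{6}$, the rules $\R{1}$ and $\R{2}$, and the case-splitting axiom $\J{3}$. Item 1 reduces to unfolding $\Box \neg A$ via $\J{6}$ as $A \rhd \bot$ and then applying $\R{1}$ to the tautology $\bot \to B$. For item 2, I would split $A$ as $(A \land B) \lor (A \land \neg B)$: starting from $B \rhd C$, the rule $\R{2}$ applied to $A \land B \to B$ gives $(A \land B) \rhd C$, while $\Box(A \to B)$ entails $\Box \neg(A \land \neg B)$, so item 1 gives $(A \land \neg B) \rhd C$; then $\J{3}$ combined with $\R{2}$ on $A \to (A \land B) \lor (A \land \neg B)$ yields $A \rhd C$. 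Item 3 uses the symmetric split of $B$ into $(A \land B) \lor (\neg A \land B)$.

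Items 4--7 concern the extensions. For item 4 I would instantiate $\J{4}_+$ at the second argument $\bot$, obtaining $\Box(B \to \bot) \to (A \rhd B \to A \rhd \bot)$, and then translate both $\Box \neg B$ and $A \rhd \bot$ through $\J{6}$ to recover the contrapositive form of $\J{4}$. Item 5 carries the main content. Deriving $\J{2}$ from $\J{2}_+$ is just padding, since $\R{1}$ gives $A \rhd B \to A \rhd (B \lor C)$, after which $\J{2}_+$ applies directly. Deriving $\J{4}_+$ from $\J{2}_+$ requires the tautology $A \to (A \land \neg B) \lor B$, which together with $\R{1}$ yields $C \rhd A \to C \rhd ((A \land \neg B) \lor B)$, while item 1 supplies $(A \land \neg B) \rhd B$ from $\Box(A \to B)$; a single application of $\J{2}_+$ then delivers $C \rhd B$. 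Item 6 is the same padding trick applied to the tautology $B \to (B \land \neg C) \lor C$, and item 7 is the immediate application of $\J{1}$ to $\Box(A \to A)$.

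For items 8 and 9 I would verify both containments. The direction $\CL \supseteq \IL^-(\J{1}, \J{2}_+)$ requires showing that $\J{6}$ is a theorem of $\CL$ (the $\to$-direction comes from $\J{1}$ applied to $\Box(\neg A \to \bot)$, and the converse comes from $\J{4}$ together with the fact that $\neg \Diamond \bot$ is a theorem of $\GL$), that $\J{2}_+$ is derivable from $\J{2}$, $\J{3}$ and item 7 (the middle term $B \lor C$ is absorbed by $C \rhd C$), and that the rules $\R{1}$, $\R{2}$ are admissible via Necessitation, $\J{1}$ and $\J{2}$. The reverse direction combines items 4, 5 and 7 to supply $\J{1}, \J{2}$ and $\J{4}$ on top of the $\IL^-$ basis. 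Item 9 follows from item 8 by adjoining $\J{5}$ to both sides.

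The main obstacle I foresee is the derivation of $\J{4}_+$ from $\J{2}_+$ inside item 5. On the surface $\J{2}_+$ only strengthens transitivity by allowing a disjunction in its middle term, so it is not obvious that it encodes the monotonicity property expressed by $\J{4}_+$. The crucial observation is that the extra disjunct $A \land \neg B$ appearing in $C \rhd ((A \land \neg B) \lor B)$ can be eliminated via item 1, precisely because $\Box(A \to B)$ makes $A \land \neg B$ provably impossible; once this trick is in place, every remaining item follows by routine manipulation of the rules and the already-established cases.
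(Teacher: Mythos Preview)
Your proposal is correct. Every step you sketch goes through: the case-splits for items 1--3 work as you describe, the padding tricks for items 5 and 6 are exactly the right instantiations of $\J{2}_+$, and your verification of the two containments in items 8 and 9 is complete (in particular, your derivation of $\J{6}$ in $\CL$ via $\J{1}$ and $\J{4}$, and of $\J{2}_+$ via $\J{2}$, $\J{3}$, and $C \rhd C$, is the standard one).

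The only point of comparison worth recording is that the paper does not actually prove most of this proposition: it cites \cite{KO20} for items 1, 2, and 4--9, and supplies an argument only for item 3. That argument is essentially identical to yours---combine the two hypotheses via $\J{3}$ to get $((\neg A \land B) \lor A) \rhd C$, then apply $\R{2}$ to the tautology $B \to (\neg A \land B) \lor A$---with the minor cosmetic difference that you first pass from $A \rhd C$ to $(A \land B) \rhd C$ before invoking $\J{3}$, whereas the paper uses $A \rhd C$ directly. So your write-up is in fact more self-contained than the paper's own treatment.
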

\begin{proof}
Except for 3, see \cite{KO20}. 
For 3, by $\J{3}$, $\IL^- \vdash ((\neg A \land B) \rhd C) \land (A \rhd C) \to ((\neg A \land B) \lor A) \rhd C$. 
Since $\IL^- \vdash B \to ((\neg A \land B) \lor A)$, we have $\IL^- \vdash ((\neg A \land B) \lor A) \rhd C \to B \rhd C$ by the rule $\R{2}$. 
Thus $\IL^- \vdash ((\neg A \land B) \rhd C) \land (A \rhd C) \to B \rhd C$. 
\end{proof}

The following lemma (Lemma \ref{l2}) plays an important role in our proofs of CIP and FPP for $\IL^-(\J{2}_+, \J{5})$ in Sections \ref{Sec:CIP} and \ref{Sec:FPP}.

\begin{fact}[See \cite{Vis97}]\label{l1}
For any formula $A$, 
\[
	\IL^- \vdash (A \lor \Diamond A) \leftrightarrow ((A \land \Box \lnot A) \lor \Diamond (A \land \Box \lnot A)). 
\]
\end{fact}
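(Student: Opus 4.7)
The plan is to prove the two implications separately, treating $\Diamond$ as the abbreviation $\neg\Box\neg$.

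For the right-to-left direction I would just use monotonicity. Propositionally, $A \land \Box\lnot A \to A$, so the first disjunct on the right implies $A \lor \Diamond A$. For the second disjunct, the derivable rule ``from $B \to C$ conclude $\Diamond B \to \Diamond C$'' (obtained by contraposition, Necessitation, and $\G{2}$) applied to $A \land \Box\lnot A \to A$ gives $\Diamond(A \land \Box\lnot A) \to \Diamond A$, and hence also implies $A \lor \Diamond A$.

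For the left-to-right direction, I would first peel off the disjunct $A$ by pure propositional reasoning: since $A \leftrightarrow (A \land \Box\lnot A) \lor (A \land \lnot\Box\lnot A)$ and $A \land \lnot\Box\lnot A \to \Diamond A$, we get
\[
\IL^- \vdash (A \lor \Diamond A) \leftrightarrow \bigl((A \land \Box\lnot A) \lor \Diamond A\bigr).
\]
So it suffices to derive $\Diamond A \to \Diamond(A \land \Box\lnot A)$ in $\IL^-$. This is the well-known consequence of L\"ob's axiom $\G{3}$: instantiating $\G{3}$ with $\lnot A$ yields $\Box(\Box\lnot A \to \lnot A) \to \Box\lnot A$, whose contrapositive is $\lnot\Box\lnot A \to \lnot\Box(\Box\lnot A \to \lnot A)$, i.e.\ $\Diamond A \to \Diamond(A \land \Box\lnot A)$ after pushing the negation inside using propositional logic under $\Diamond$ (again via Necessitation and $\G{2}$ applied to the tautology $\lnot(\Box\lnot A \to \lnot A) \leftrightarrow (A \land \Box\lnot A)$).

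There is essentially no obstacle here; the only point requiring care is that the argument must stay inside $\IL^-$, which only uses the $\Box$-fragment axioms $\G{1}$--$\G{3}$ together with Necessitation and propositional logic, and none of the $\rhd$-axioms or $\rhd$-rules. In particular, the two monotonicity-style facts above about $\Diamond$, and the use of L\"ob's axiom $\G{3}$, are all already available in $\IL^-$, so the proof transfers unchanged from the standard $\GL$ argument.
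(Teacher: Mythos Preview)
Your argument is correct: both directions are handled properly, and in particular the key step $\Diamond A \to \Diamond(A \land \Box\lnot A)$ is exactly the contrapositive of the L\"ob instance $\Box(\Box\lnot A \to \lnot A) \to \Box\lnot A$, using only $\G{1}$--$\G{3}$ and Necessitation, so it stays inside $\IL^-$ as you note.

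There is nothing to compare against here: the paper does not supply its own proof of this statement but records it as a known fact with a reference to Visser. Your write-up is the standard $\GL$ argument and would serve perfectly well as the omitted proof.
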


\begin{lem}\label{l2}
Let $A$ and $C$ be any formulas. 
\begin{enumerate}
	\item $\IL^-(\J{2}, \J{5}) \vdash ((A \land \Box \lnot A) \rhd C) \leftrightarrow (A \rhd C)$.
	\item $\IL^-(\J{2}_{+}, \J{5}) \vdash (C \rhd (A \land \Box \lnot A)) \leftrightarrow (C \rhd A)$.
\end{enumerate}
\end{lem}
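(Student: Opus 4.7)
\medskip

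The plan is to use Fact~\ref{l1} as the propositional bridge and then chain the two halves using the available $\rhd$-principles. In both parts the easy direction is handled by one application of $\R{1}$ or $\R{2}$ to the trivial implication $A \land \Box \neg A \to A$, so the real content is the other direction.

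For part 1, the strategy is to assume $(A \land \Box \neg A) \rhd C$ and reach $A \rhd C$ by pushing $C$ backwards along the equivalence in Fact~\ref{l1}. First I would apply $\J{5}$ to the formula $A \land \Box \neg A$ to obtain $\Diamond(A \land \Box \neg A) \rhd (A \land \Box \neg A)$, then use $\J{2}$ together with the hypothesis to get $\Diamond(A \land \Box \neg A) \rhd C$. Combining with the hypothesis by $\J{3}$ yields
\[
\bigl((A \land \Box \neg A) \lor \Diamond(A \land \Box \neg A)\bigr) \rhd C.
\]
Since $A \to A \lor \Diamond A$ is a tautology and Fact~\ref{l1} rewrites the right-hand side as $(A \land \Box \neg A) \lor \Diamond(A \land \Box \neg A)$, a single application of $\R{2}$ delivers $A \rhd C$.

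For part 2, the argument is structurally dual but uses $\J{2}_+$ (and the $\J{4}_+$ it entails by Proposition~\ref{Prop:KO}(5)) instead of $\J{2}$ and $\R{2}$. Starting from $C \rhd A$, I would first use Fact~\ref{l1} in the form $A \to (A \land \Box \neg A) \lor \Diamond(A \land \Box \neg A)$, apply necessitation, and invoke $\J{4}_+$ to obtain
\[
C \rhd \bigl((A \land \Box \neg A) \lor \Diamond(A \land \Box \neg A)\bigr).
\]
Then $\J{5}$ gives $\Diamond(A \land \Box \neg A) \rhd (A \land \Box \neg A)$, and $\J{2}_+$ (instantiated with $X=C$, $Y=\Diamond(A \land \Box \neg A)$, $Z=A \land \Box \neg A$) collapses these two to $C \rhd (A \land \Box \neg A)$.

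The main obstacle to anticipate is part 2: one might first try to mimic the argument of part 1 by proving the auxiliary fact $A \rhd (A \land \Box \neg A)$ and then using transitivity, but this route seems to require $(A \land \Box \neg A) \rhd (A \land \Box \neg A)$, which is not available without $\J{1}$. The key insight is therefore to avoid that detour entirely by using $\J{4}_+$ (free for us because $\IL^-(\J{2}_+) \vdash \J{4}_+$) to transport the propositional implication from Fact~\ref{l1} under $C \rhd$, and then close up via a single application of $\J{2}_+$ rather than $\J{2}+\J{3}+\R{2}$ as in part 1.
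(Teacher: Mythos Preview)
Your proof is correct and follows essentially the same route as the paper's. The only cosmetic difference is in part~2: you transport the implication $A \to (A \land \Box\neg A) \lor \Diamond(A \land \Box\neg A)$ under $C \rhd (-)$ via Necessitation and $\J{4}_+$, whereas the paper does this in one step with the primitive rule $\R{1}$ of $\IL^-$, so your appeal to Proposition~\ref{Prop:KO}(5) is not actually needed.
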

\begin{proof}
In this proof, let $B \equiv (A \land \Box \neg A)$. 

1. $(\leftarrow)$: Since $\IL^- \vdash B \to A$, we have $\IL^- \vdash A \rhd C \to B \rhd C$ by $\R{2}$. 

$(\rightarrow)$:  
Since $\IL^-(\J{5}) \vdash \Diamond B \rhd B$, we have $\IL^-(\J{2}, \J{5}) \vdash B \rhd C \to \Diamond B \rhd C$. 
Hence, by $\J{3}$, 
\[
\IL^-(\J{2}, \J{5}) \vdash B \rhd C \to (B \lor \Diamond B)\rhd C. 
\]
By Fact \ref{l1} and $\R{2}$, we obtain
\[
\IL^-(\J{2}, \J{5}) \vdash  B \rhd C \to (A \lor \Diamond A) \rhd C.
\]
Since $\IL^- \vdash  A \to (A \lor \Diamond A)$, we obtain 
\[
\IL^-(\J{2}, \J{5}) \vdash B \rhd C \to A \rhd C 
\]
by $\R{2}$.

2. 
$(\rightarrow)$: This is immediate from $\IL^- \vdash B \to A$ and $\R{1}$. 

$(\leftarrow)$: 
Since $\IL^- \vdash A \to (A \lor \Diamond A)$, we obtain
\[
\IL^- \vdash C \rhd A \to C \rhd (A \lor \Diamond A)
\]
by $\R{1}$. 
Then, by Fact \ref{l1} and $\R{1}$, 
\[
\IL^- \vdash C \rhd A \to C \rhd (B \lor \Diamond B). 
\]
Since $\IL^-(\J{5}) \vdash \Diamond B \rhd B$, we obtain
\[
\IL^-(\J{2}_{+}, \J{5}) \vdash C \rhd A \to C \rhd B
\]
because $(C \rhd (\Diamond B \lor B)) \land (\Diamond B \rhd B) \to C \rhd B$ is an instance of $\J{2}_+$. 
\end{proof}

\subsection{$\IL^-$-frames and models}

\begin{defn}
We say that a system $\langle W, R, \{S_w\}_{w \in W} \rangle$ is an \textit{$\IL^-$-frame} if it satisfies the following three conditions: 
\begin{enumerate}
	\item $W$ is a non-empty set; 
	\item $R$ is a transitive and conversely well-founded binary relation on $W$; 
	\item For each $w \in W$, $S_w$ is a binary relation on $W$ with
\[
	\forall x, y \in W(x S_w y \Rightarrow w R x). 
\]
\end{enumerate}
A system $\langle W, R, \{S_w\}_{w \in W}, \Vdash \rangle$ is called an \textit{$\IL^-$-model} if $\langle W, R, \{S_w\}_{w \in W} \rangle$ is an $\IL^-$-frame and $\Vdash$ is a usual satisfaction relation on the Kripke frame $\langle W, R \rangle$ with the following additional condition: 
\[
w \Vdash A \rhd B \iff \forall x \in W(w R x \ \&\ x \Vdash A \Rightarrow \exists y \in W (x S_w y\ \&\ y \Vdash B)). 
\]
A formula $A$ is said to be \textit{valid} in an $\IL^-$-frame $\langle W, R, \{S_w\}_{w \in W} \rangle$ if for any satisfaction relation $\Vdash$ on the frame and any $w \in W$, $w\Vdash A$. 
\end{defn}

For each $w \in W$, let $\uparrow (w) : = \{x \in W : w R x\}$. 

\begin{prop}[See \cite{Vis88} and \cite{KO20}]\label{Prop:FC}
Let $\mathcal{F} = \langle W, R, \{S_w\}_{w \in W} \rangle$ be any $\IL^-$-frame. 
\begin{enumerate}
	\item $\J{1}$ is valid in $\mathcal{F}$ if and only if for any $w, x \in W$, if $w R x$, then $x S_w x$. 
	\item $\J{2}_+$ is valid in $\mathcal{F}$ if and only if $\J{4}_+$ is valid in $\mathcal{F}$ and for any $w \in W$, $S_w$ is transitive.  
	\item $\J{4}_+$ is valid in $\mathcal{F}$ if and only if for any $w \in W$, $S_w$ is a binary relation on $\uparrow (w)$. 
	\item $\J{5}$ is valid in $\mathcal{F}$ if and only if for any $w, x, y \in W$, $w R x$ and $x R y$ imply $x S_w y$. 
\end{enumerate}
\end{prop}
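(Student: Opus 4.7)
The plan is to prove each of the four equivalences separately, in each case splitting into the ``condition implies validity'' direction (a direct semantic unfolding of the truth clause for $\rhd$) and the converse ``validity implies condition'' direction (a valuation-choice argument, where one sets a propositional variable to be true at exactly the point one wants to isolate).

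For the easy direction in each clause, I would just chase definitions. For $\J{1}$: given $x \Vdash A$ and $w \Vdash \Box(A\to B)$, use $x S_w x$ to witness $A \rhd B$. For $\J{4}_+$: if $S_w \subseteq \uparrow(w)\times\uparrow(w)$, any $S_w$-successor of an $A$-point sees $\Box(A\to B)$ pushed down to a pointwise $A\to B$, so the witness for $C\rhd A$ is automatically a witness for $C\rhd B$. For $\J{5}$: if $x\Vdash \Diamond A$ and $xRy$ with $y\Vdash A$, the assumption $xRy \Rightarrow xS_w y$ makes $y$ the required witness. For $\J{2}_+$ (the $\Leftarrow$ direction): given $w \Vdash A\rhd(B\lor C)$ and $w\Vdash B\rhd C$, grab the first $S_w$-witness $y$; if $y\Vdash C$ we are done, otherwise $y\Vdash B$ and, by $\J{4}_+$ validity, $wRy$, so $w\Vdash B\rhd C$ applied at $y$ gives a further witness $z \Vdash C$ with $y S_w z$, and transitivity of $S_w$ closes it to $xS_w z$.

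For the converse directions I would use the standard ``fresh variable true at a single point'' trick. For $\J{1}$: assume $wRx$, let $p$ be true only at $x$, and read off $xS_w x$ from $w\Vdash p\rhd p$. For $\J{5}$: assume $wRx$ and $xRy$, let $p$ be true only at $y$, then $x \Vdash \Diamond p$ and $w \Vdash \Diamond p \rhd p$ forces the unique $p$-witness $y$ to satisfy $xS_w y$. For $\J{4}_+$: assume $xS_w y$ but, for contradiction, $\lnot(wRy)$; let $q$ be true only at $x$ and $p$ only at $y$, so that $\Box\lnot p$ holds at $w$ while $w \Vdash q \rhd p$ (witnessed by $y$), and then $\J{4}_+$ with $A \equiv p,\ B\equiv \bot$ forces $w \Vdash q\rhd\bot$, contradicting $wRx$ and $x\Vdash q$. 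For $\J{2}_+$ ($\Rightarrow$): first obtain $\J{4}_+$ validity from soundness of Proposition~\ref{Prop:KO}(5) on $\IL^-$-frames (so the codomain issue is handled), then, given $xS_w y$ and $yS_w z$, set three fresh variables $p,q,r$ true only at $x,y,z$ respectively; verify $w \Vdash p \rhd (q\lor r)$ and $w \Vdash q \rhd r$ (using $\J{4}_+$ to guarantee $wRy$), apply $\J{2}_+$, and read off $xS_w z$ from the unique $r$-point.

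The main obstacle I anticipate is the $\J{2}_+$ case, since the condition to be proved is a conjunction (\,$\J{4}_+$ validity \emph{and} transitivity of each $S_w$) and the transitivity argument genuinely needs the $\J{4}_+$ clause already in hand, to know that the intermediate $S_w$-point lies in $\uparrow(w)$ and is therefore visible to the hypothesis $w \Vdash B \rhd C$. So the order of proof matters: one must derive $\J{4}_+$ validity first (most cleanly via semantic soundness of the derivation $\IL^-(\J{2}_+) \vdash \J{4}_+$ on $\IL^-$-frames), and only then argue transitivity. The remaining subtlety, shared by all four converse directions, is merely to choose the valuations to make exactly the intended points satisfy each variable, so that the witnesses forced by the axiom must be the ones one is trying to link.
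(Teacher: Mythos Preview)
The paper does not supply a proof of this proposition; it is stated with references to \cite{Vis88} and \cite{KO20} and used as a black box thereafter. Your proposal is the standard frame-correspondence argument and is correct in all four parts, including the delicate ordering in the $\J{2}_+$ case (obtaining $\J{4}_+$-validity first, then using it to see that the intermediate $S_w$-point lies in $\uparrow(w)$ before arguing transitivity).
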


\begin{thm}[See \cite{KO20}, \cite{Ign91} and \cite{DeJVel90}]\label{Thm:KC}\leavevmode
Let $L$ be one of logics shown in Figure \ref{Fig1} in Section \ref{Sec:Intro}. 
Then for any formula $A$, the following are equivalent: 
\begin{enumerate}
	\item $L \vdash A$. 
	\item $A$ is valid in all (finite) $\IL^-$-frames in which all axioms of $L$ are valid. 
\end{enumerate}
\end{thm}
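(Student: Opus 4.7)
The plan is to verify soundness by a routine induction on proof length: the axioms $\G{1}$--$\G{3}$, $\J{3}$ and $\J{6}$ are validated on every $\IL^-$-frame directly from the defining clauses of the satisfaction relation, and each additional axiom present in $L$ is validated by the corresponding frame condition listed in Proposition \ref{Prop:FC}. The rules Modus Ponens, Necessitation, $\R{1}$ and $\R{2}$ all clearly preserve validity in any $\IL^-$-frame, since $\R{1}$ and $\R{2}$ only manipulate components of $\rhd$-formulas that are quantified over in its truth clause.

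For completeness I would prove the contrapositive by constructing a finite counter-model. Fix $A$ with $L \nvdash A$, close the subformulas of $A$ under single negation together with a standard L\"ob-style witness for each $\Box$-subformula to enforce strict $R$-successors, and call the resulting finite set $\Phi$. Let $W$ be the collection of $L$-maximal $\Phi$-consistent subsets of $\Phi$, and set $w \Vdash p$ iff $p \in w$. Define $w R x$ iff $\{\Box B, B : \Box B \in w \cap \Phi\} \subseteq x$ together with the usual irreflexivity witness, and define $x S_w y$ to hold iff $w R x$ and, for every $B \rhd C \in w \cap \Phi$, $B \in x$ implies $C \in y$. This basic definition is then modified (restriction to $\uparrow(w) \times \uparrow(w)$, transitive closure, inclusion of the diagonal over $\uparrow(w)$, or inclusion of $R$-edges) according to which of $\J{1}, \J{2}_+, \J{4}_+, \J{5}$ are axioms of $L$.

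The Truth Lemma is proved by induction on $B \in \Phi$; the only nontrivial case is $B \equiv B_1 \rhd B_2$. The $(\Rightarrow)$ direction follows from the definition of $S_w$. For $(\Leftarrow)$, one establishes the existence lemma: if $B_1 \rhd B_2 \notin w$ then there are $x, y \in W$ with $w R x$, $B_1 \in x$, $\lnot B_2 \in y$ and $x S_w y$. The construction of $x$ and $y$ uses a standard extension-lemma argument, the rules $\R{1}$ and $\R{2}$ playing the role of necessitation for $\rhd$, together with $\J{6}$ to interface $\rhd$ with $\Box$.

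The principal obstacle is tailoring the definition of $S_w$ so that both the Truth Lemma and the required frame conditions hold simultaneously for each of the twelve logics in Figure \ref{Fig1}. For $\J{4}_+$ one must restrict $S_w$ to $\uparrow(w) \times \uparrow(w)$; for $\J{2}_+$ one takes the transitive closure, and the nontrivial check is that the $(\Rightarrow)$ direction of the Truth Lemma survives, which appeals to $\J{2}_+$ itself; for $\J{5}$ one adjoins all pairs $(x,y)$ with $w R x$ and $x R y$, invoking $\J{5}$ to preserve the Truth Lemma; for $\J{1}$ one adds the diagonal $\{(x,x) : w R x\}$ and uses Proposition \ref{Prop:KO}(7). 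Coordinating these modifications consistently for all the logics in the diagram is the core technical content, and is carried out in \cite{KO20}, \cite{Ign91} and \cite{DeJVel90}.
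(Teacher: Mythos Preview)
The paper does not prove Theorem~\ref{Thm:KC}; it is stated with citation to \cite{KO20}, \cite{Ign91} and \cite{DeJVel90} and no argument is given in the text, so there is no in-paper proof against which to compare your sketch. Your outline follows the standard canonical-model strategy and ultimately defers the substantive work to the same references, which is precisely what the paper itself does.

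One point in your sketch is worth flagging, however, since it would mislead if taken literally. With your basic definition of $S_w$ (namely $x S_w y$ iff $w R x$ and every $B \rhd C \in w$ with $B \in x$ has $C \in y$), the direction ``$B_1 \rhd B_2 \in w \Rightarrow w \Vdash B_1 \rhd B_2$'' does \emph{not} follow from the definition alone: given $x$ with $w R x$ and $B_1 \in x$ you must still \emph{produce} some $y$ with $x S_w y$, i.e.\ show that $\{C : B \rhd C \in w,\ B \in x\}$ extends to an element of $W$. Conversely, your stated existence lemma for the other direction has the quantifiers the wrong way round: from $\neg(B_1 \rhd B_2) \in w$ one must exhibit an $x$ such that \emph{every} $S_w$-successor of $x$ omits $B_2$, not merely a single $y$ with $x S_w y$ and $\lnot B_2 \in y$. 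The proofs in the cited sources address this with a sharper definition of $S_w$ (typically via $C$-critical successors, or via a labelled step-by-step construction as in \cite{DeJVel90}), and getting both directions of the $\rhd$-clause to go through simultaneously with the required frame conditions for each of the twelve logics is precisely the nontrivial content you correctly allude to at the end.
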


\subsection{The fixed point and the Craig interpolation properties}

For each formula $A$, let $v(A)$ be the set of all propositional variables contained in $A$. 

\begin{defn}
We say that a formula $A$ is \textit{modalized} in a propositional variable $p$ if every occurrence of $p$ in $A$ is in the scope of some modal operators $\Box$ or $\rhd$. 
\end{defn}

\begin{defn}
A logic $L$ is said to have the \textit{fixed point property} (FPP) if for any propositional variable $p$ and any formula $A(p)$ which is modalized in $p$, there exists a formula $F$ such that $v(F) \subseteq v(A) \setminus \{p\}$ and $L \vdash F \leftrightarrow A(F)$. 
\end{defn}

\begin{defn}
We say that the \textit{uniqueness of fixed points} (UFP) holds for a logic $L$ if for any propositional variables $p$, $q$ and any formula $A(p)$ which is modalized in $p$ and does not contain $q$, 
\[
	L \vdash \boxdot(p \leftrightarrow A(p)) \land \boxdot(q \leftrightarrow A(q)) \to (p \leftrightarrow q).
\]
\end{defn}

\begin{thm}[De Jongh and Visser \cite{DeJVis91}]\leavevmode
\begin{enumerate}
	\item $\IL$ has FPP. 
	\item UFP holds for $\IL$. 
\end{enumerate}
\end{thm}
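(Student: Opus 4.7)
The plan is to prove (2) first by a substitution lemma combined with L\"ob's theorem, and then derive (1) from (2) and \textbf{CIP} for $\IL$ (Areces, Hoogland and de Jongh) via Smory\'nski's classical argument. The key auxiliary is the substitution lemma: for any formulas $A(p)$, $B$, $C$,
\[
\IL \vdash \boxdot(B \leftrightarrow C) \to (A(B) \leftrightarrow A(C)),
\]
proved by induction on $A$. The non-trivial cases are $A = \Box D$, handled by necessitation together with the $\GL$-theorem $\Box X \to \Box\Box X$, and $A = D \rhd E$, handled by combining $\J{4}_+$ (derivable in $\IL$ via $\J{2}_+$ by Proposition~\ref{Prop:KO}(5)) with the admissible rule $\Box(X \to Y) \to (Y \rhd Z \to X \rhd Z)$ from Proposition~\ref{Prop:KO}(2). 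When $p$ is modalized in $A(p)$, every occurrence of $p$ sits under one of these modalities, which strengthens the hypothesis to $\IL \vdash \Box(B \leftrightarrow C) \to (A(B) \leftrightarrow A(C))$.

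For (2), assume $\boxdot(p \leftrightarrow A(p)) \land \boxdot(q \leftrightarrow A(q))$. The modalized substitution lemma gives $\Box(p \leftrightarrow q) \to (A(p) \leftrightarrow A(q))$; combined with the fixed-point equations this yields $\Box(p \leftrightarrow q) \to (p \leftrightarrow q)$ under the hypothesis. Necessitating and using $\boxdot X \to \Box\boxdot X$ to propagate the hypotheses produces $\Box[\Box(p \leftrightarrow q) \to (p \leftrightarrow q)]$, whence L\"ob's theorem delivers $\Box(p \leftrightarrow q)$ and therefore $p \leftrightarrow q$. For (1), (2) yields $\IL \vdash \boxdot(p \leftrightarrow A(p)) \land p \to (\boxdot(q \leftrightarrow A(q)) \to q)$ for fresh $q$; \textbf{CIP} produces an interpolant $F$ with $v(F) \subseteq v(A) \setminus \{p\}$, and substituting $p$ for $q$ in the right half and combining with the left half gives $\IL \vdash \boxdot(p \leftrightarrow A(p)) \to (p \leftrightarrow F)$. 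Substituting $A(F)$ for $p$ (valid since $p \notin v(F)$) yields $\IL \vdash \boxdot(A(F) \leftrightarrow A(A(F))) \to (F \leftrightarrow A(F))$, and the modalized substitution lemma together with $\GL$-reasoning upgrades $\Box(F \leftrightarrow A(F))$ to $\boxdot(A(F) \leftrightarrow A(A(F)))$; chaining delivers $\Box(F \leftrightarrow A(F)) \to (F \leftrightarrow A(F))$, to which L\"ob's theorem applies.

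The main obstacle is the $\rhd$ case of the substitution lemma: the asymmetry of $\rhd$ forces two different tools, namely $\J{4}_+$ for substitution into the right argument and the derived rule of Proposition~\ref{Prop:KO}(2) for substitution into the left argument. Equally delicate is the bookkeeping in the \textbf{FPP} argument of which steps require the full $\boxdot$ hypothesis and which need only $\Box$; it is precisely the modalization of $p$ in $A(p)$ that permits L\"ob to close the final bootstrap.
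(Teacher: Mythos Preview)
Your argument is correct. The paper does not prove this theorem directly; it cites De Jongh and Visser and records that their proof of FPP is the explicit syntactic one: $A(\top) \rhd B(\Box\neg A(\top))$ is a fixed point of $A(p) \rhd B(p)$, and the substitution principle then handles arbitrary modalized formulas (the paper reproduces this computation for $\IL^-(\J{2}_+,\J{5})$ as Theorem~\ref{FPrhd}). Your route for FPP instead combines UFP with CIP via Smory\'nski's argument, which is exactly the content of the paper's Lemma~\ref{FPP-CU}; your UFP argument is the one the paper presents as Theorem~\ref{UFP} (and the displayed chain in the proof of Theorem~\ref{UFP'}).

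Both routes are sound, but they buy different things. The syntactic route yields an explicitly computable fixed point and is self-contained; it is also the historically honest one, since CIP for $\IL$ was only established a decade after De Jongh--Visser. Your route is shorter once CIP is available, but it is non-constructive and imports the much heavier semantic machinery behind Theorem~\ref{Thm:AHD}.
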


In particular, de Jongh and Visser showed that a fixed point of a formula $A(p) \rhd B(p)$ is $A(\top) \rhd B(\Box \neg A(\top))$. 
Then a fixed point of every formula $A(p)$ which is modalized in $p$ is explicitly calculable by a usual argument. 

\begin{defn}
A logic $L$ is said to have the \textit{Craig interpolation property} (CIP) if for any formulas $A$ and $B$, there exists a formula $C$ such that $v(C) \subseteq v(A) \cap v(B)$, $L \vdash A \to C$ and $L \vdash C \to B$. 
\end{defn}

\begin{thm}[Areces, Hoogland and de Jongh \cite{AHD01}]\label{Thm:AHD}
$\IL$ has CIP. 
\end{thm}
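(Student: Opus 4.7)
My plan is to obtain Theorem \ref{Thm:AHD} as a consequence of CIP for the weaker logic $\IL^-(\J{2}_+, \J{5})$, which is the main technical novelty of Section \ref{Sec:CIP}; this is legitimate because $\IL$ is deductively equivalent to $\IL^-(\J{1}, \J{2}_+, \J{5})$ by Proposition \ref{Prop:KO}(9), so once CIP for $\IL^-(\J{2}_+, \J{5})$ is available, together with a semantic argument showing that adding $\J{1}$ preserves the property, the theorem follows. I therefore focus on sketching the semantic CIP proof for $\IL^-(\J{2}_+, \J{5})$, in the spirit of Areces--Hoogland--de Jongh.

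The argument is by contraposition using the completeness theorem (Theorem \ref{Thm:KC}). Assume $\IL^-(\J{2}_+, \J{5}) \vdash A \to B$ but no formula $C$ with $v(C) \subseteq v(A) \cap v(B)$ satisfies both $\vdash A \to C$ and $\vdash C \to B$. Call a pair $(\Gamma, \Delta)$ of finite sets of formulas \emph{separated} if $v(\gamma) \subseteq v(A)$ for all $\gamma \in \Gamma$, $v(\delta) \subseteq v(B)$ for all $\delta \in \Delta$, and there is no formula $C$ in the common vocabulary with $\vdash \bigwedge \Gamma \to C$ and $\vdash C \to \bigvee \Delta$. The classical recipe is to build a finite $\IL^-$-frame whose worlds are suitably saturated separated pairs, choosing $R$ and $\{S_w\}_{w}$ so that the frame correspondences for $\J{2}_+$ and $\J{5}$ from Proposition \ref{Prop:FC}(2)(4) hold. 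A truth lemma then yields that the root pair $(\{A\}, \{B\})$ forces $A$ and refutes $B$, contradicting the assumption. The critical case of the truth lemma is the $\rhd$ clause: Lemma \ref{l2} is decisive here, since it lets one replace $X \rhd Y$ by $(X \land \Box \lnot X) \rhd Y$, and dually on the consequent, producing the saturated $S_w$-successors needed without breaking the separation property.

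Passing from CIP for $\IL^-(\J{2}_+, \J{5})$ to CIP for $\IL$ is then easy. Since $\J{1}$ corresponds semantically to $x S_w x$ whenever $w R x$ (Proposition \ref{Prop:FC}(1)), I re-run the construction above and additionally close each $S_w$ under the reflexive diagonal on $\uparrow(w)$. This only adds $S_w$-successors and hence cannot violate the separation invariant, while it does validate $\J{1}$. The interpolant read off is in $v(A) \cap v(B)$ as required. The main obstacle throughout is the $\rhd$ step of the truth lemma, where one must produce $S_w$-witnesses while maintaining pair separation; the $\J{5}$-axiom compounds this by demanding $R \subseteq S_w$ on $\uparrow(w)$, and Lemma \ref{l2} is precisely the bridge that rescues the construction by aligning the witness requirements with a syntactic form that the pair-saturation procedure can actually supply.
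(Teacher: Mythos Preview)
Your overall plan---reduce to CIP for $\IL^-(\J{2}_+,\J{5})$ and then pass to $\IL$---matches the paper's, and your sketch of the semantic construction for $\IL^-(\J{2}_+,\J{5})$ is in the spirit of Theorem~\ref{T3-1}. The gap is in the final transfer. Closing each $S_w$ reflexively on the model built from $\IL^-(\J{2}_+,\J{5})$-inseparable pairs is \emph{not} harmless for the Truth Lemma. The dangerous direction is $\neg(G\rhd F)\in\Gamma_1\cup\Gamma_2 \Rightarrow \langle\Gamma,\tau\rangle\nVdash G\rhd F$: you must exhibit some $\langle\Delta,\sigma\rangle$ with $G\in\Delta_1\cup\Delta_2$ such that no $S_{\langle\Gamma,\tau\rangle}$-successor of it satisfies $F$, and after reflexive closure $\langle\Delta,\sigma\rangle$ is its own successor, so you also need $F\notin\Delta_1\cup\Delta_2$. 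In Lemma~\ref{L3-m1} one inserts $G,\Box{\sim}G,\Box{\sim}F$ into $\Delta$ together with $\Box{\sim}A,{\sim}A$ for each $A\rhd F\in\Gamma_1$; this forces ${\sim}F\in\Delta$ only when $F\rhd F\in\Gamma_1$. But $F\rhd F$ is not $\IL^-(\J{2}_+,\J{5})$-provable, so a complete pair may well contain $\neg(F\rhd F)$. Concretely, take $G\equiv F\equiv p$ with $\neg(p\rhd p)\in\Gamma_1$: then $p\in\Delta_1$ is forced, and with the reflexive arrow no witness for $\langle\Gamma,\tau\rangle\nVdash p\rhd p$ can exist at all. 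If instead you rerun the whole construction with $\IL$-provability, the problem disappears since then $F\rhd F\in\Gamma_1$ always---but then you have simply reproduced the Areces--Hoogland--de Jongh argument and have not actually used CIP for the weaker logic.

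The paper's transfer is syntactic rather than semantic. Proposition~\ref{Prop:EMB} shows $\IL\vdash A$ iff $\IL^-(\J{2}_+,\J{5})\vdash \boxdot\bigwedge\{C\rhd C:C\in\PSub(A)\}\to A$. Given $\IL\vdash A_0\to B_0$, one uses $\PSub(A_0\to B_0)=\Sub(A_0)\cup\Sub(B_0)$ to split the added conjuncts $C\rhd C$ according to whether their variables lie in $v(A_0)$ or in $v(B_0)$, applies Theorem~\ref{T3-1} to the resulting $\IL^-(\J{2}_+,\J{5})$-implication to obtain an interpolant $D$ with $v(D)\subseteq v(A_0)\cap v(B_0)$, and then absorbs the extra conjuncts via $\IL\vdash C\rhd C$ (Proposition~\ref{Prop:KO}.7) to conclude $\IL\vdash A_0\to D$ and $\IL\vdash D\to B_0$. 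This genuinely derives CIP for $\IL$ from CIP for $\IL^-(\J{2}_+,\J{5})$, which your semantic modification does not.
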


\section{Uniqueness of fixed points}\label{Sec:UFP}

In this section, we investigate the uniqueness of fixed points for sublogics. 
First, we show that UFP holds for extensions of $\IL^-(\J{4}_{+})$. 
Secondly, we prove that UFP is not the case for sublogics of $\IL^-(\J{1}, \J{5})$. 
Then we investigate the newly introduced notion that a formula $A(p)$ is \textit{left-modalized} in a propositional variable $p$. 
We prove that UFP with respect to formulas which are left-modalized in $p$ ($\ell$UFP) holds for all extensions of $\IL^-$. 
At last, we discuss Smory\'nski's implication ``CIP + UFP $\Rightarrow$ FPP'' in our framework. 

\subsection{UFP}

By adapting Smory\'nski's argument \cite{Smo85}, de Jongh and Visser \cite{DeJVis91} showed that UFP holds for every logic closed under Modus Ponens and Necessitation, and containing $\G{1}$, $\G{2}$, $\G{3}$, $\mathbf{E1}$ and $\mathbf{E2}$, where
\begin{itemize}
	\item[$\mathbf{E1}$] $\Box(A \leftrightarrow B) \to (A \rhd C \leftrightarrow B \rhd C)$; 
	\item[$\mathbf{E2}$] $\Box(A \leftrightarrow B) \to (C \rhd A \leftrightarrow C \rhd B)$. 
\end{itemize}


Since $\mathbf{E1}$ and $\mathbf{E2}$ are easy consequences of Proposition \ref{Prop:KO}.2 and $\J{4}_+$ respectively, we obtain the following theorem. 

\begin{thm}[UFP for $\IL^-(\J{4}_+)$]\label{UFP} 
UFP holds for every extension of the logic $\IL^-(\J{4}_+)$. 
\end{thm}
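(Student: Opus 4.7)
The plan is to directly invoke the general result of de Jongh and Visser mentioned right before the theorem: any logic closed under Modus Ponens and Necessitation that extends $\G{1}, \G{2}, \G{3}, \mathbf{E1}, \mathbf{E2}$ already satisfies UFP. So the entire task reduces to verifying that $\mathbf{E1}$ and $\mathbf{E2}$ are provable in $\IL^-(\J{4}_+)$, because $\IL^-(\J{4}_+)$ clearly contains $\G{1}, \G{2}, \G{3}$ and is closed under MP and Necessitation.

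For $\mathbf{E1}$, I would not even need $\J{4}_+$. From $\Box(A \leftrightarrow B)$ one obtains $\Box(A \to B)$ and $\Box(B \to A)$ by $\G{2}$ and propositional logic. Now Proposition \ref{Prop:KO}.2 supplies $\IL^- \vdash \Box(X \to Y) \to (Y \rhd C \to X \rhd C)$; applying this with $(X,Y) := (A,B)$ yields $B \rhd C \to A \rhd C$, and with $(X,Y) := (B,A)$ yields $A \rhd C \to B \rhd C$. Combining gives the required biconditional, so $\mathbf{E1}$ is already a theorem of $\IL^-$.

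For $\mathbf{E2}$, here $\J{4}_+$ enters. By definition $\J{4}_+$ is $\Box(A \to B) \to (C \rhd A \to C \rhd B)$, so from $\Box(A \leftrightarrow B)$ I again derive the two one-directional $\Box$-statements and apply $\J{4}_+$ in each direction to conclude $C \rhd A \leftrightarrow C \rhd B$. Thus $\mathbf{E2}$ is derivable in $\IL^-(\J{4}_+)$.

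The combined derivability of $\mathbf{E1}$ and $\mathbf{E2}$ places every extension $L$ of $\IL^-(\J{4}_+)$ in the scope of the de Jongh--Visser theorem, and UFP for $L$ follows. There is no genuine obstacle here: the work of adapting Smory\'nski's argument is already done in \cite{DeJVis91,Smo85}, and the only thing to check in our framework is the easy syntactic fact that $\J{4}_+$ (together with Proposition \ref{Prop:KO}.2) is strong enough to recover $\mathbf{E1}$ and $\mathbf{E2}$, which is just the one-line reduction above.
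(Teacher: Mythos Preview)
Your proposal is correct and matches the paper's own argument essentially line for line: the paper also observes that $\mathbf{E1}$ follows from Proposition~\ref{Prop:KO}.2 and $\mathbf{E2}$ from $\J{4}_+$, and then invokes the de Jongh--Visser result. There is nothing to add.
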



As shown in \cite{DeJVis91}, in the proof of Theorem \ref{UFP}, the use of the following substitution principle is essential. 

\begin{prop}[The Substitution Principle]\label{p2}
Let $A$, $B$ and $C(p)$ be any formulas.  
\begin{enumerate}
	\item $\IL^-(\J{4}_{+}) \vdash \boxdot(A \leftrightarrow B) \to (C(A) \leftrightarrow C(B))$.
	\item If $C(p)$ is modalized in $p$, then $\IL^-(\J{4}_{+}) \vdash \Box (A \leftrightarrow B) \to (C(A) \leftrightarrow C(B))$.
\end{enumerate} 
\end{prop}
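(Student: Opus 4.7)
The plan is to establish both parts simultaneously by induction on the complexity of $C(p)$. The propositional cases (base cases $C(p) = \bot$ and $C(p) = q$ for $q \neq p$, together with the implication case $C(p) = D(p) \to E(p)$) are routine and require only propositional reasoning on top of the induction hypothesis. The base case $C(p) = p$ of part 1 is immediate since $\boxdot(A \leftrightarrow B) \to (A \leftrightarrow B)$ is a propositional tautology; this case does not arise in part 2, since $p$ itself fails to be modalized in $p$. The content of the proof therefore lies in the two cases $C(p) = \Box D(p)$ and $C(p) = D(p) \rhd E(p)$, where the induction hypothesis supplies part 1 for the subformulas.

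For $C(p) = \Box D(p)$, the induction hypothesis gives $\IL^-(\J{4}_+) \vdash \boxdot(A \leftrightarrow B) \to (D(A) \leftrightarrow D(B))$. Applying Necessitation and $\G{2}$ yields
\[
\IL^-(\J{4}_+) \vdash \Box \boxdot(A \leftrightarrow B) \to (\Box D(A) \leftrightarrow \Box D(B)).
\]
The key auxiliary fact is that $\G{3}$ entails the K4 principle $\Box X \to \Box \Box X$ in $\IL^-$; together with $\Box X \to \Box X$ this shows $\IL^- \vdash \Box(A \leftrightarrow B) \to \Box \boxdot(A \leftrightarrow B)$, and a fortiori the same with $\boxdot$ in the antecedent. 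Combining, we obtain both part 1 and part 2 in this case at once.

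For $C(p) = D(p) \rhd E(p)$, apply the induction hypothesis to $D$ and to $E$ separately, and then necessitate as above to get $\IL^-(\J{4}_+) \vdash \Box(A \leftrightarrow B) \to \Box(D(A) \leftrightarrow D(B))$ and the analogous implication for $E$. The equivalence $D(A) \rhd E(A) \leftrightarrow D(B) \rhd E(B)$ then follows by two substitutions: on the right argument of $\rhd$ by means of $\J{4}_+$, and on the left argument by means of Proposition \ref{Prop:KO}.2, each used in both directions of the respective box equivalence. Since $\Box(A \leftrightarrow B)$ is already enough to drive this argument, part 2 falls out together with part 1.

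The point requiring the most care is tracking precisely which antecedent, $\boxdot(A \leftrightarrow B)$ or $\Box(A \leftrightarrow B)$, is needed at each step: part 1 must carry the stronger hypothesis because of the atomic case $C(p) = p$, whereas part 2 avoids that case entirely, and once one passes beneath a single $\Box$ or a $\rhd$, the Löb-derived K4 principle lets the weaker $\Box(A \leftrightarrow B)$ take over. I do not anticipate any real obstacle beyond this bookkeeping; the whole argument is a direct adaptation of the standard $\GL$ substitution principle, with $\J{4}_+$ and Proposition \ref{Prop:KO}.2 supplying the congruence for the right and left arguments of $\rhd$ respectively.
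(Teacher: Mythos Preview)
Your proposal is correct, and the argument is exactly the standard one. The paper itself does not spell out a proof of Proposition~\ref{p2}, pointing instead to de Jongh and Visser \cite{DeJVis91} and to the schemata $\mathbf{E1}$, $\mathbf{E2}$ (which are precisely Proposition~\ref{Prop:KO}.2 and $\J{4}_+$); the detailed proof it does give, for the closely analogous Proposition~\ref{p'2}, follows the same inductive template you describe, handling only the $\rhd$ case explicitly.
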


Proposition \ref{p2}.2 shows that every extension $L$ of $\IL^-(\J{4}_+)$ proves $\Box(A \leftrightarrow B) \to (C(A) \leftrightarrow C(B))$ for any formula $C(p)$ which is modalized in $p$. 
We notice that the converse of this statement also holds. 

\begin{prop}
Let $L$ be any extension of $\IL^-$. 
Suppose that for any formula $C(p)$ which is modalized in $p$, $L \vdash \Box(A \leftrightarrow B) \to (C(A) \leftrightarrow C(B))$. 
Then $L \vdash \J{4}_+$.
\end{prop}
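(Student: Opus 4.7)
The plan is to derive $\J{4}_+$ directly from the hypothesis by instantiating it at a carefully chosen context formula. Given that $\J{4}_+$ concerns the right argument of $\rhd$, the natural choice is $C(p) \equiv D \rhd p$, where $D$ is an auxiliary formula not containing $p$. This $C(p)$ is modalized in $p$ because the single occurrence of $p$ sits inside $\rhd$, so the hypothesis applies and yields
\[
L \vdash \Box(A \leftrightarrow B') \to (D \rhd A \leftrightarrow D \rhd B')
\]
for all formulas $A, B'$.

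The next step is to bridge between $\Box(A \to B)$ (the antecedent of $\J{4}_+$) and $\Box(A \leftrightarrow B')$ (the antecedent the hypothesis gives me). The standard trick is to take $B' \equiv A \land B$. Since $(A \to B) \to (A \leftrightarrow A \land B)$ is a propositional tautology, necessitation together with $\G{2}$ give $L \vdash \Box(A \to B) \to \Box(A \leftrightarrow A \land B)$. Combining this with the instance of the hypothesis above produces
\[
L \vdash \Box(A \to B) \to (D \rhd A \to D \rhd (A \land B)).
\]

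Finally, I close the gap between $D \rhd (A \land B)$ and the desired $D \rhd B$ using the rule $\R{1}$ of $\IL^-$: from the tautology $A \land B \to B$ I obtain $L \vdash D \rhd (A \land B) \to D \rhd B$. Chaining these two implications yields $L \vdash \Box(A \to B) \to (D \rhd A \to D \rhd B)$, which is precisely the axiom scheme $\J{4}_+$.

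The argument is essentially just choosing the right witness, so there is no real obstacle; the only thing worth double-checking is that $D \rhd p$ genuinely counts as modalized in $p$ under the definition given, and that the manipulation $\Box(A \to B) \to \Box(A \leftrightarrow A \land B)$ is available in every extension of $\IL^-$, which it is since $\IL^-$ contains $\G{1}$, $\G{2}$ and is closed under Necessitation.
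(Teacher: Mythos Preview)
Your proof is correct and essentially identical to the paper's own argument: both instantiate the hypothesis at the context $D \rhd p$ (the paper writes $C \rhd p$) with $p$ fresh, pass from $\Box(A \to B)$ to $\Box(A \leftrightarrow A \land B)$, and then collapse $D \rhd (A \land B)$ to $D \rhd B$ via $\R{1}$. The only difference is that you spell out the roles of Necessitation, $\G{2}$, and $\R{1}$ a bit more explicitly than the paper does.
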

\begin{proof}
Let $A$, $B$ and $C$ be any formulas and assume $p \notin v(C)$. 
Then the formula $C \rhd p$ is modalized in $p$. 
By the supposition, we have 
\[
	L \vdash \Box(A \leftrightarrow A \land B) \to (C \rhd A \leftrightarrow C \rhd (A \land B)).
\]
Since $\IL^- \vdash \Box(A \to B) \to \Box(A \leftrightarrow A \land B)$ and $\IL^- \vdash C \rhd (A \land B) \to C \rhd B$, we obtain $L \vdash \Box(A \to B) \to (C \rhd A \to C \rhd B)$. 
\end{proof}

On the other hand, we show that UFP does not hold for sublogics of $\IL^{-}(\J{1}, \J{5})$ in general.

\begin{prop}
Let $p, q$ be distinct propositional variables. 
Then, 
\[
\IL^{-}(\J{1}, \J{5}) \nvdash \boxdot(p \leftrightarrow (\top \rhd \lnot p)) \land \boxdot(q \leftrightarrow (\top \rhd \lnot q)) \to (p \leftrightarrow q).
\]
\end{prop}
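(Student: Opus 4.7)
The plan is to invoke the completeness theorem (Theorem~\ref{Thm:KC}) and reduce the claim to producing a finite $\IL^{-}(\J{1},\J{5})$-frame equipped with a valuation at whose root both $\boxdot(p\leftrightarrow (\top\rhd\lnot p))$ and $\boxdot(q\leftrightarrow (\top\rhd\lnot q))$ hold while $p\leftrightarrow q$ fails. The key idea is to build a frame in which the fixed-point equation $x\mapsto \top\rhd\lnot x$ admits two distinct solutions at the root, so that $p$ and $q$ may be interpreted by these two different solutions.

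Concretely, I propose the three-point frame with $W=\{w,a,t\}$, $R=\{(w,a),(w,t),(a,t)\}$, and
\[
S_w=\{(a,a),(t,t),(a,t),(t,a)\},\qquad S_a=\{(t,t),(t,w)\},\qquad S_t=\emptyset.
\]
By Proposition~\ref{Prop:FC}, verifying $\J{1}$ and $\J{5}$ amounts to routine incidence checks: each $R$-edge $(u,v)$ contributes the required loop $(v,v)\in S_u$, and the sole two-step $R$-chain $(w,a),(a,t)$ contributes the required $\J{5}$-edge $(a,t)\in S_w$. The non-obvious ingredients are the ``downward'' edge $(t,w)\in S_a$, which lets the value of $p$ at $a$ genuinely depend on its value at $w$, and the symmetric pair $(a,t),(t,a)\in S_w$, which makes the fixed-point equation at $w$ symmetric enough to admit both truth values.

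Next I would define the valuation by $V(p)=\{a,t\}$ and $V(q)=\{w,t\}$ and verify the fixed-point equations world by world. At the terminal $t$ both $\top\rhd\lnot p$ and $\top\rhd\lnot q$ hold vacuously, matching $t\Vdash p$ and $t\Vdash q$. At $a$ the only $R$-successor $t$ has $S_a$-image $\{t,w\}$; this image contains the $\lnot p$-point $w$ but no $\lnot q$-point, matching $a\Vdash p$ and $a\not\Vdash q$. At $w$ the $R$-successor $a$ has $S_w$-image $\{a,t\}$, which contains no $\lnot p$-point, so $\top\rhd\lnot p$ fails at $w$ in accordance with $w\not\Vdash p$; on the other hand both $S_w$-images $\{a,t\}$ and $\{t,a\}$ contain the $\lnot q$-point $a$, so $\top\rhd\lnot q$ holds at $w$ in accordance with $w\Vdash q$. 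Hence $p\leftrightarrow q$ fails at $w$, and by Theorem~\ref{Thm:KC} the formula is not derivable in $\IL^{-}(\J{1},\J{5})$.

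The main obstacle is the design step rather than the verification. Because terminal worlds are forced to satisfy any fixed point of $x\mapsto \top\rhd\lnot x$, the natural bottom-up recursion along $R$ would compute a unique fixed point; the downward $S$-edge $(t,w)\in S_a$ is inserted precisely to break this well-foundedness, and the symmetric $S_w$-edges are then needed so that both truth values for $p$ at $w$ extend consistently to the rest of the model. Finding a small configuration in which the edges forced by $\J{1}$ and $\J{5}$ do not destroy this symmetry is where the real work lies.
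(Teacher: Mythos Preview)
Your proposal is correct and follows the same strategy as the paper: exhibit a finite $\IL^{-}$-frame validating $\J{1}$ and $\J{5}$ together with a valuation refuting the formula at a chosen world, the key being an $S$-edge that leaves the $R$-cone (violating $\J{4}_+$) so that the recursion defining the fixed point is broken. The paper's model is a little simpler---it takes $W=\{w,x,y\}$, $R=\{(w,x)\}$, $S_w=\{(x,x),(x,y)\}$, $S_x=S_y=\emptyset$, with $p$ true at $w,x$ and $q$ true at $x,y$---but your three-edge frame with the downward $S_a$-edge $(t,w)$ achieves exactly the same effect and all your verifications go through.
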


\begin{proof}
We define an $\IL^-$-frame $\mathcal{F} = \seq{W, R, \{S_{w}\}_{w \in W}}$ as follows: 
\begin{itemize}
	\item $W := \{w, x, y\}$; 
	\item $R := \{\seq{w, x}\}$; 
	\item $S_{w}:=\{\seq{x, x}, \seq{x, y}\}$, $S_{x} := \emptyset$, $S_{y} := \emptyset$. 
\end{itemize}

Obviously, by Proposition \ref{Prop:FC}, $\IL^-(\J{1}, \J{5})$ is valid in $\mathcal{F}$. 
Let $\Vdash$ be a satisfaction relation on $\mathcal{F}$ satisfying the following conditions: 


\begin{itemize}
	\item $w \Vdash p$ and $w \nVdash q$;
	\item $x \Vdash p$ and $x \Vdash q$;
	\item $y \nVdash p$ and $y \Vdash q$. 
\end{itemize}

\begin{figure}[th]
\centering
\begin{tikzpicture}
\node [draw, circle] (w) at (0,0) {$w$};
\node [draw, circle] (x) at (0,1.5) {$x$};
\node [draw, circle] (y) at (1.5,1.5) {$y$};

\draw [thick, ->] (w)--(x);
\draw [thick, ->, dashed] (x)--(y);
\draw [thick, ->, dashed] (0.2,1.7) arc (-45:225:0.3);

\draw (-0.3, 0) node[left] {$p$};
\draw (-0.3, 1.5) node[left] {$p, q$};
\draw (1.8, 1.5) node[right] {$q$};

\end{tikzpicture}
\caption{A counter model of UFP for $\IL^-(\J{1}, \J{5})$}
\end{figure}

We prove $w \Vdash \boxdot(p \leftrightarrow (\top \rhd \lnot p)) \land \boxdot(q \leftrightarrow (\top \rhd \lnot q)) \land \lnot (p \leftrightarrow q)$. 
Since $w \Vdash p$ and $w \nVdash q$, $w \Vdash \lnot (p \leftrightarrow q)$ is obvious. 
We show $w \Vdash (p \leftrightarrow (\top \rhd \lnot p)) \land (q \leftrightarrow (\top \rhd \lnot q))$. 
Since $w \Vdash p$ and $w \nVdash q$, it suffices to prove $w \Vdash \top \rhd \lnot p$ and $w \Vdash \lnot(\top \rhd \lnot q)$. 

\vspace{2mm}
\noindent
$w \Vdash \top \rhd \lnot p$: Let $z \in W$ be any element with $w R z$. 
Then $z = x$. 
Since $x S_{w} y$ and $y \Vdash \lnot p$, we obtain $w \Vdash \top \rhd \lnot p$. 

\vspace{2mm}
\noindent
$w \Vdash \lnot(\top \rhd \lnot q)$: 
Let $z \in W$ be any element with $x S_{w} z$. 
Then $z = x$ or $z = y$. 
In either case, we obtain $z \Vdash q$. 
Since $x R y$, we conclude $w \Vdash \lnot(\top \rhd \lnot q)$. 

At last, we show $w \Vdash \Box(p \leftrightarrow (\top \rhd \lnot p)) \land \Box(q \leftrightarrow (\top \rhd \lnot q))$. 
Let $z \in W$ be such that $w R z$. 
Then $z = x$. 
Since there is no $z' \in W$ such that $x R z'$, $x \Vdash (\top \rhd \lnot p) \land (\top \rhd \lnot q)$. 
Since $x \Vdash p$ and $x \Vdash q$, we have $x \Vdash (p \leftrightarrow (\top \rhd \lnot p)) \land (q \leftrightarrow (\top \rhd \lnot q))$. 
Hence, we obtain $w \Vdash \Box(p \leftrightarrow (\top \rhd \lnot p)) \land \Box(q \leftrightarrow (\top \rhd \lnot q))$.

Therefore, $w \Vdash \boxdot(p \leftrightarrow (\top \rhd \lnot p)) \land \boxdot(q \leftrightarrow (\top \rhd \lnot q)) \land \lnot (p \leftrightarrow q)$. 

\end{proof}

\subsection{$\ell$UFP}

Even for extensions of $\IL^-$, Proposition \ref{Prop:KO}.2 suggests that the uniqueness of fixed points may hold with respect to formulas in some particular forms. 
From this perspective, we introduce the notion that formulas are left-modalized in $p$.

\begin{defn}
We say that a formula $A$ is \textit{left-modalized} in a propositional variable $p$ if $A$ is modalized in $p$ and for any subformula $B \rhd C$ of $A$, $p \notin v(C)$. 
\end{defn}

Then we obtain the following version of the substitution principle. 

\begin{prop}\label{p'2}
Let $A$, $B$ and $C(p)$ be any formulas such that for any subformula $D \rhd E$ of $C$, $p \notin v(E)$. 
\begin{enumerate}
	\item $\IL^- \vdash \boxdot(A \leftrightarrow B) \to (C(A) \leftrightarrow C(B))$.
	\item If $C(p)$ is left-modalized in $p$, then $\IL^- \vdash \Box(A \leftrightarrow B) \to (C(A) \leftrightarrow C(B))$.
\end{enumerate}
\end{prop}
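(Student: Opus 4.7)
The plan is to prove (1) by induction on $C(p)$, inheriting the side-condition (namely $p \notin v(E)$ for every $\rhd$-subformula $D \rhd E$) to all subformulas; part (2) then follows by a short secondary induction that reuses (1) at modalised positions.

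The base and propositional cases of (1) are immediate: $C(A) = C(B)$ when $p \notin v(C)$; if $C = p$ then $(A \leftrightarrow B)$ is the first conjunct of $\boxdot(A \leftrightarrow B)$; and $C = C_1 \to C_2$ is handled by the two induction hypotheses plus propositional reasoning. For $C = \Box C_1$, the induction hypothesis gives $\IL^- \vdash \boxdot(A \leftrightarrow B) \to (C_1(A) \leftrightarrow C_1(B))$; Necessitation and $\G{2}$ promote this to $\IL^- \vdash \Box \boxdot(A \leftrightarrow B) \to (\Box C_1(A) \leftrightarrow \Box C_1(B))$, while $\boxdot(A \leftrightarrow B) \to \Box \boxdot(A \leftrightarrow B)$ follows from the transitivity of $\Box$ in $\GL$ (a consequence of $\G{3}$).

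The main obstacle is the case $C = C_1 \rhd C_2$. The side-condition forces $p \notin v(C_2)$, so it suffices to derive $C_1(A) \rhd C_2 \leftrightarrow C_1(B) \rhd C_2$. From the induction hypothesis, Necessitation, $\G{2}$, and the transitivity step above I obtain $\IL^- \vdash \boxdot(A \leftrightarrow B) \to \Box(C_1(A) \leftrightarrow C_1(B))$, and Proposition \ref{Prop:KO}.2 (``$\IL^- \vdash \Box(X \to Y) \to (Y \rhd Z \to X \rhd Z)$'') applied in both directions supplies the required $\rhd$-equivalence. This is exactly where the restriction on $\rhd$-subformulas earns its keep: allowing $p$ to appear on the right of a $\rhd$ would force us to substitute $\Box$-provable equivalents there, which is the content of $\J{4}_+$ and is unavailable in $\IL^-$. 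Everything else in (1) is bookkeeping with $\G{2}$, $\G{3}$, and Necessitation.

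For (2), I run a secondary induction on $C$. The $p$-free and Boolean cases use the induction hypothesis of (2), since each Boolean component of a left-modalized $C$ is either $p$-free or itself left-modalized in $p$. In the $\Box C_1$ and $C_1 \rhd C_2$ cases I invoke part (1) on the immediate subformula $C_1$; inspecting the corresponding arguments above shows that $\boxdot(A \leftrightarrow B)$ is used there only through its $\Box$-prefixed form $\Box \boxdot(A \leftrightarrow B)$, which is already implied by $\Box(A \leftrightarrow B)$ in $\GL$. Hence $\Box(A \leftrightarrow B)$ alone suffices, yielding (2).
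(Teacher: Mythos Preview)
Your proposal is correct and follows essentially the same route as the paper: induction on $C$ for part (1), with the $\rhd$-case handled via Proposition~\ref{Prop:KO}.2 after the side-condition forces $p \notin v(C_2)$, and part (2) obtained by observing that in the modal cases the hypothesis $\boxdot(A \leftrightarrow B)$ enters only through $\Box\boxdot(A \leftrightarrow B)$, which already follows from $\Box(A \leftrightarrow B)$ in $\GL$. The paper's treatment of part (2) is terser (``This follows from our proof of 1''), but the content matches your secondary induction.
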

\begin{proof}
1. This is proved by induction on the construction of $C(p)$. 
We only prove the case $C(p) \equiv D(p) \rhd E$ (By our supposition, $p \notin v(E)$). 
For any subformula $D' \rhd E'$ of $D$, it is also a subformula of $C$, and hence $p \notin v(E')$. 
Then, by induction hypothesis, we obtain
\[
\IL^- \vdash \boxdot(A \leftrightarrow B) \to (D(A) \leftrightarrow D(B)). 
\]
Then, $\IL^- \vdash \Box(A \leftrightarrow B) \to \Box(D(A) \leftrightarrow D(B))$. Therefore, by Proposition \ref{Prop:KO}.2, 
\[
\IL^- \vdash \Box(A \leftrightarrow B) \to (D(A) \rhd E \leftrightarrow D(B) \rhd E). 
\]
Since $p \notin v(E)$, $C(A) \equiv (D(A) \rhd E)$ and $C(B) \equiv (D(B) \rhd E)$. Therefore, 
\[
\IL^- \vdash \Box(A \leftrightarrow B) \to (C(A) \leftrightarrow C(B)). 
\]

2. This follows from our proof of 1. 
\end{proof}



We introduce our restricted versions of UFP and FPP. 

\begin{defn}
We say that $\ell$UFP holds for a logic $L$ if for any formula $A(p)$ which is left-modalized in $p$, $L \vdash \boxdot(p \leftrightarrow A(p)) \land \boxdot(q \leftrightarrow A(q)) \to (p \leftrightarrow q)$.
\end{defn}

\begin{defn}
We say that a logic $L$ has $\ell$FPP if for any formula $A(p)$ which is left-modalized in $p$, there exists a formula $F$ such that $v(F) \subseteq v(A) \setminus \{p\}$ and $L \vdash F \leftrightarrow A(F)$. 
\end{defn}

Then $\ell$UFP holds for every our sublogic of $\IL$.



\begin{thm}[$\ell$UFP for $\IL^-$]\label{UFP'} 
$\ell$UFP holds for all extensions of $\IL^-$. 
\end{thm}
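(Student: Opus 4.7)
The plan is to imitate Smory\'nski's Löb-style uniqueness argument, with the only real adjustment being the use of the restricted substitution principle Proposition \ref{p'2}.2 in place of Proposition \ref{p2}.2. The left-modalization hypothesis is exactly what is needed to make this weaker substitution principle applicable in $\IL^-$, because it guarantees that $p$ never appears on the right-hand side of any $\rhd$ in $A(p)$.

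Concretely, let $A(p)$ be left-modalized in $p$. The steps I would carry out, in order, are:
\begin{enumerate}
\item Apply Proposition \ref{p'2}.2 with $C(p) := A(p)$ to obtain
\[
    \IL^- \vdash \Box(p \leftrightarrow q) \to (A(p) \leftrightarrow A(q)).
\]
\item By pure propositional reasoning from $(p \leftrightarrow A(p))$, $(q \leftrightarrow A(q))$ and the conclusion of step 1,
\[
    \IL^- \vdash (p \leftrightarrow A(p)) \land (q \leftrightarrow A(q)) \land \Box(p \leftrightarrow q) \to (p \leftrightarrow q).
\]
\item Apply Necessitation and distribute $\Box$ over implication and conjunction using $\G{2}$, producing
\[
    \IL^- \vdash \Box(p \leftrightarrow A(p)) \land \Box(q \leftrightarrow A(q)) \to \Box\bigl(\Box(p \leftrightarrow q) \to (p \leftrightarrow q)\bigr).
\]
\item Chain with Löb's axiom $\G{3}$ applied to $p \leftrightarrow q$ to deduce
\[
    \IL^- \vdash \Box(p \leftrightarrow A(p)) \land \Box(q \leftrightarrow A(q)) \to \Box(p \leftrightarrow q).
\]
\item Combine step 4 with step 2, absorbing the non-boxed conjuncts into $\boxdot$, to conclude
\[
    \IL^- \vdash \boxdot(p \leftrightarrow A(p)) \land \boxdot(q \leftrightarrow A(q)) \to (p \leftrightarrow q).
\]
\end{enumerate}

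There is essentially no serious obstacle left: the only place where the proof could fail is step 1, and that is precisely where the left-modalization hypothesis is used. Without it, one would need the full substitution principle Proposition \ref{p2}.2, which requires $\J{4}_+$ and is unavailable over bare $\IL^-$. Everything else is the textbook Löb/Smory\'nski manipulation, and it is available already from the $\GL$-fragment $\G{1}$--$\G{3}$ of $\IL^-$. Since these ingredients are all provable in $\IL^-$, the conclusion transfers to every extension $L$ of $\IL^-$, establishing $\ell$UFP.
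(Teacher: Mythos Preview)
Your proposal is correct and follows essentially the same approach as the paper: both start from Proposition~\ref{p'2}.2 to obtain $\IL^- \vdash \Box(p \leftrightarrow q) \to (A(p) \leftrightarrow A(q))$, and then run the standard Smory\'nski/L\"ob argument via $\G{3}$. The paper presents the argument as a single chain of implications under the hypothesis $\boxdot(p \leftrightarrow A(p)) \land \boxdot(q \leftrightarrow A(q))$, while you spell out the Necessitation/$\G{2}$ step explicitly, but the underlying reasoning is identical.
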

\begin{proof}
Let $A(p)$ be any formula which is left-modalized in $p$. 
Then by Proposition \ref{p'2}.2, $\IL^- \vdash \Box(p \leftrightarrow q) \to (A(p) \leftrightarrow A(q))$. 
Therefore, 
\begin{align*}
\IL^- \vdash \boxdot(p \leftrightarrow A(p)) \land \boxdot(q \leftrightarrow A(q)) 
 & \to (\Box(p \leftrightarrow q) \to (A(p) \leftrightarrow A(q))) \\
 & \to (\Box(p \leftrightarrow q) \to (p \leftrightarrow q)) \\
 & \to (\Box(\Box(p \leftrightarrow q) \to (p \leftrightarrow q))) \\
 & \to \Box(p \leftrightarrow q)\\
 & \to (p \leftrightarrow q). 
\end{align*}
\end{proof}

\subsection{Applications of Smory\'nski's argument}

We have shown that UFP and the substitution principle hold for extensions of $\IL^-(\J{4}_+)$ (Theorem \ref{UFP} and Proposition \ref{p2}). 
Then by applying Smory\'nski's argument \cite{Smo78}, we prove that for any appropriate extension of $\IL^-(\J{4}_+)$, CIP implies FPP. 

\begin{lem}\label{FPP-CU}
Let $L$ be any extension of $\IL^{-}(\J{4}_{+})$ that is closed under substituting a formula for a propositional variable. 
If $L$ has CIP, then $L$ also has FPP.  
\end{lem}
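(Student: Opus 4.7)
The plan is to run Smoryński's argument from \cite{Smo78} inside $L$, with all three necessary ingredients now in place: UFP (Theorem \ref{UFP}), CIP (by assumption), and the substitution principle of Proposition \ref{p2}.2. Given $A(p)$ modalized in $p$, I may assume $p \in v(A)$, since otherwise $A$ itself already serves as a fixed point. Fix a fresh propositional variable $q \notin v(A)$. Applying UFP and rearranging yields
\[
L \vdash \boxdot(p \leftrightarrow A(p)) \land p \to \bigl(\boxdot(q \leftrightarrow A(q)) \to q\bigr).
\]
The antecedent has variable set $v(A)$ and the consequent has variable set $(v(A) \setminus \{p\}) \cup \{q\}$, so their intersection is $v(A) \setminus \{p\}$. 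Hence CIP supplies an interpolant $F$ with $v(F) \subseteq v(A) \setminus \{p\}$ such that
\[
L \vdash \boxdot(p \leftrightarrow A(p)) \land p \to F \quad \text{and} \quad L \vdash F \to \bigl(\boxdot(q \leftrightarrow A(q)) \to q\bigr).
\]
I will show that this $F$ is the desired fixed point.

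Using closure of $L$ under substitution, I substitute $A(F)$ for $p$ in the first implication and $A(F)$ for $q$ in the second; this is sound because $p, q \notin v(F)$. Rearranging and combining the two resulting implications gives
\[
L \vdash \boxdot\bigl(A(F) \leftrightarrow A(A(F))\bigr) \to \bigl(F \leftrightarrow A(F)\bigr). \tag{$\dagger$}
\]
Next, I apply Proposition \ref{p2}.2 to the modalized $A(p)$ with substitutands $F$ and $A(F)$ to obtain $L \vdash \Box(F \leftrightarrow A(F)) \to (A(F) \leftrightarrow A(A(F)))$. The $\GL$-internal 4-axiom $\Box X \to \Box\Box X$, together with Necessitation and $\G{2}$, then promotes this to
\[
L \vdash \Box(F \leftrightarrow A(F)) \to \boxdot\bigl(A(F) \leftrightarrow A(A(F))\bigr).
\]
Chaining with $(\dagger)$ yields $L \vdash \Box(F \leftrightarrow A(F)) \to (F \leftrightarrow A(F))$, and Löb's rule (available in every extension of $\GL$) finally concludes $L \vdash F \leftrightarrow A(F)$.

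The most delicate point is the upgrade from $\Box$ to $\boxdot$ performed on the antecedent of $(\dagger)$: the substitution principle Proposition \ref{p2}.2 is essential here, because at this stage $F$ and $A(F)$ are only equivalent under $\Box$ rather than provably in $L$. Apart from this bookkeeping, the argument is routine modal reasoning, and any additional schemata (such as $\J{5}$ or $\J{1}$) in an extension $L$ of $\IL^-(\J{4}_+)$ play no role.
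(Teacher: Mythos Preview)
Your proof is correct and follows essentially the same Smory\'nski argument as the paper: obtain the interpolant $F$ from UFP + CIP, substitute $A(F)$ to reach $\boxdot(A(F)\leftrightarrow A(A(F)))\to(F\leftrightarrow A(F))$, and close the loop with Proposition~\ref{p2}.2 and L\"ob. The only cosmetic differences are that the paper first substitutes $p$ for $q$ (then $A(F)$ for $p$ once) and applies L\"ob to $A(F)\leftrightarrow A(A(F))$ rather than to $F\leftrightarrow A(F)$, but these are equivalent rearrangements of the same computation.
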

\begin{proof}
Suppose $L \supseteq \IL^-(\J{4}_+)$ and $L$ has CIP. 
Let $A(p)$ be any formula modalized in $p$. 
Then by Theorem \ref{UFP}, 
\[
	L \vdash \boxdot(p \leftrightarrow A(p)) \land \boxdot(q \leftrightarrow A(q)) \to (p \leftrightarrow q).
\]
We have
\[
	L \vdash \boxdot(p \leftrightarrow A(p)) \land p \to (\boxdot(q \leftrightarrow A(q))  \to q).
\]
Since $L$ has CIP, there exists a formula $F$ such that $v(F) \subseteq v(A) \setminus \{p\}$, $L \vdash \boxdot(p \leftrightarrow A(p)) \land p \to F$ and $L \vdash F \to (\boxdot(q \leftrightarrow A(q)) \to q)$. 
Since $q \notin v(F)$, we have $L \vdash F \to (\boxdot(p \leftrightarrow A(p)) \to p)$ by substituting $p$ for $q$. 
Then 
\[
	L \vdash \boxdot(p \leftrightarrow A(p)) \to (F \leftrightarrow p). 
\]
By substituting $A(F)$ for $p$, we get
\begin{eqnarray}\label{FP1}
	L \vdash \boxdot(A(F) \leftrightarrow A(A(F))) \to (F \leftrightarrow A(F)). 
\end{eqnarray}
Then
\[
	L \vdash \Box(A(F) \leftrightarrow A(A(F))) \to \Box (F \leftrightarrow A(F)). 
\]
Since $A(p)$ is modalized in $p$, by Proposition \ref{p2}.2, 
\[
	L \vdash \Box(A(F) \leftrightarrow A(A(F))) \to (A(F) \leftrightarrow A(A(F))). 
\]
Then by applying the axiom scheme $\G{3}$, we obtain $L \vdash A(F) \leftrightarrow A(A(F))$. 
From this with (\ref{FP1}), we conclude $L \vdash F \leftrightarrow A(F)$. 
Therefore $F$ is a fixed point of $A(p)$ in $L$. 
\end{proof}

Also we have shown that $\ell$UFP and the substitution principle with respect to left-modalized formulas hold for extensions of $\IL^-$ (Theorem \ref{UFP'} and Proposition \ref{p'2}). 
Thus our proof of Lemma \ref{FPP-CU} also works for the following lemma.

\begin{lem}\label{lFPP-ClU}
Let $L$ be any extension of $\IL^{-}$ that is closed under substituting a formula for a propositional variable. 
If $L$ has CIP, then $L$ also has $\ell$FPP.  
\end{lem}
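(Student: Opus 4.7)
The plan is to transcribe the proof of Lemma \ref{FPP-CU} nearly verbatim, replacing each ingredient specific to $\IL^-(\J{4}_+)$ with its left-modalized counterpart, which is already available over $\IL^-$. Concretely, Theorem \ref{UFP'} supplies $\ell$UFP in place of Theorem \ref{UFP}, and Proposition \ref{p'2}.2 supplies the substitution principle for formulas left-modalized in a propositional variable in place of Proposition \ref{p2}.2. Both of these apply over any extension of $\IL^-$, matching our hypothesis on $L$; the remaining ingredients (CIP, closure under substitution, and the L\"ob axiom $\G{3}$) are unchanged.

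Given a formula $A(p)$ left-modalized in $p$ and a fresh variable $q$, Theorem \ref{UFP'} and elementary propositional manipulation yield
\[
L \vdash \boxdot(p \leftrightarrow A(p)) \land p \to \bigl(\boxdot(q \leftrightarrow A(q)) \to q\bigr).
\]
By CIP, there is an interpolant $F$ with $v(F) \subseteq v(A) \setminus \{p\}$ such that $L$ proves both $\boxdot(p \leftrightarrow A(p)) \land p \to F$ and $F \to (\boxdot(q \leftrightarrow A(q)) \to q)$. Since $q \notin v(F)$, substituting $p$ for $q$ in the second sequent (using closure under substitution) gives $L \vdash \boxdot(p \leftrightarrow A(p)) \to (F \leftrightarrow p)$. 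Substituting $A(F)$ for $p$ then produces
\[
L \vdash \boxdot(A(F) \leftrightarrow A(A(F))) \to (F \leftrightarrow A(F)). \quad (*)
\]

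Applying $\Box$ to $(*)$ yields $L \vdash \Box(A(F) \leftrightarrow A(A(F))) \to \Box(F \leftrightarrow A(F))$. Since $A(p)$ is left-modalized in $p$, Proposition \ref{p'2}.2 applied with $C(p) := A(p)$ gives $L \vdash \Box(F \leftrightarrow A(F)) \to (A(F) \leftrightarrow A(A(F)))$, so composing the two implications yields $L \vdash \Box(A(F) \leftrightarrow A(A(F))) \to (A(F) \leftrightarrow A(A(F)))$. By $\G{3}$ we conclude $L \vdash A(F) \leftrightarrow A(A(F))$, which combined with $(*)$ gives $L \vdash F \leftrightarrow A(F)$; hence $F$ is the desired fixed point of $A(p)$ in $L$.

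I do not expect any genuine obstacle here. The only invariant worth checking is that the hypothesis ``$A(p)$ is left-modalized in $p$'' is used consistently: we only ever apply Proposition \ref{p'2}.2 to $C(p) := A(p)$ itself, which is left-modalized by assumption, so the weaker substitution principle of Proposition \ref{p'2}.2 suffices everywhere Proposition \ref{p2}.2 was invoked in Lemma \ref{FPP-CU}, and the argument goes through over the weaker base $\IL^-$.
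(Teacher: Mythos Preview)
Your proposal is correct and takes essentially the same approach as the paper, which simply remarks that the proof of Lemma \ref{FPP-CU} goes through verbatim once Theorem \ref{UFP} and Proposition \ref{p2}.2 are replaced by their left-modalized analogues Theorem \ref{UFP'} and Proposition \ref{p'2}.2. Your write-up even makes explicit the intermediate step (composing $\Box(F \leftrightarrow A(F)) \to (A(F) \leftrightarrow A(A(F)))$ with the boxed form of $(*)$) that the paper leaves implicit.
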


\section{The Craig interpolation property}\label{Sec:CIP}

In this section, we prove the following theorem. 

\begin{thm}[CIP for $\IL^-(\J{2}_+, \J{5})$]\label{T3-1}
The logic $\IL^-(\J{2}_+, \J{5})$ has CIP. 
\end{thm}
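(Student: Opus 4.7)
The plan is to adapt the semantic proof of CIP for $\IL$ due to Areces, Hoogland and de Jongh to the setting of $\IL^-$-frames. I would argue by contraposition: assuming the implication $A \to B$ has no Craig interpolant in the common vocabulary, I would construct an $\IL^-$-model in which both $\J{2}_+$ and $\J{5}$ are valid, together with a world satisfying $A \land \lnot B$, thereby concluding $\IL^-(\J{2}_+, \J{5}) \nvdash A \to B$ by the completeness theorem (Theorem \ref{Thm:KC}).

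Following the AHD blueprint, the worlds of the model would be \emph{inseparable pairs} $(\Gamma, \Delta)$ of sets of formulas with $v(\Gamma) \subseteq v(A)$ and $v(\Delta) \subseteq v(B)$, meaning that no formula $C$ with $v(C) \subseteq v(A) \cap v(B)$ satisfies $\Gamma \vdash C$ and $\Delta \vdash \lnot C$. The root would be a maximal such extension of the pair $(\{A\},\{\lnot B\})$; its existence is the Maehara-style starting point granted by the assumption that no interpolant exists. I would then define $R$ on pairs by a simultaneous $\Box$-lift on both coordinates, and define $\{S_w\}_{w \in W}$ by a simultaneous $\rhd$-lift mimicking the semantic clause, so that a joint truth lemma for both coordinates can be proved by induction on formula complexity.

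The main technical hurdle is to verify that the constructed frame validates $\J{2}_+$ and $\J{5}$. By Proposition \ref{Prop:FC}, this requires $S_w \subseteq\, \uparrow(w)\times \uparrow(w)$, transitivity of each $S_w$, and the inclusion $\{(x,y) : w R x \text{ and } x R y\} \subseteq S_w$. The first two are secured by closing the definition of $S_w$ under the $\J{4}_+$- and $\J{2}_+$-driven composition of $\rhd$-witnesses while preserving inseparability. The third is delicate: the $S_w$ produced by a naive pair-Henkin construction does not automatically subsume the restriction of $R$ to $\uparrow(w)$. This is exactly where Lemma \ref{l2} becomes essential. Since $\IL^-(\J{2}_+, \J{5})$ proves both $A' \rhd C \leftrightarrow (A' \land \Box \lnot A') \rhd C$ and $C \rhd A' \leftrightarrow C \rhd (A' \land \Box \lnot A')$, we may assume that the $\rhd$-antecedents and $\rhd$-consequents driving the construction are $\Box$-minimal in the sense of carrying an extra conjunct $\Box\lnot(\cdot)$. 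With this reduction, the additional $\J{5}$-required pairs in $S_w$ can always be witnessed by existing members of the domain without violating the two-sided inseparability bookkeeping.

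The step I expect to be the principal obstacle is this last verification: simultaneously maintaining inseparability on both coordinates while enforcing the $\J{5}$-closure on the whole family $\{S_w\}_{w\in W}$, and ensuring that the truth lemma still handles the $\rhd$-case uniformly. Once Lemma \ref{l2} is deployed to restrict to $\Box$-minimal witnesses, and once $\J{2}_+$-closure is realized via $\J{4}_+$-substitution inside $\rhd$, the truth lemma for the pair model and hence the desired counter-model construction should follow the AHD argument almost directly. As a by-product, CIP for $\IL$ drops out as noted in the introduction, since every instance of a valid implication in $\IL$ is already an instance valid in $\IL^-(\J{2}_+,\J{5})$ after a routine translation.
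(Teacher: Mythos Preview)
Your plan is the paper's approach: an AHD-style countermodel built from inseparable pairs, with Lemma~\ref{l2} as the key device. Two corrections are worth making.

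First, the worlds are not bare complete pairs $\Gamma=(\Gamma_1,\Gamma_2)$ but tagged objects $\langle \Gamma, \tau\rangle$, where $\tau$ is a finite sequence of formulas from $\Phi^1_\rhd \cup \Phi^2_\rhd$ subject to a rank bound. The tag records which formula $A$ governs the $A$-critical-successor relation $\prec_A$ at that node, and the definition of $S_{\langle \Gamma,\tau\rangle}$ is phrased in terms of the first entry of $\tau$ beyond the base; without this bookkeeping the ``simultaneous $\rhd$-lift'' you sketch is underdetermined, and one cannot simultaneously secure the frame conditions and the $\rhd$-clause of the Truth Lemma.

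Second, you locate Lemma~\ref{l2} in the verification of the $\J{5}$ frame condition, but in the paper the frame conditions for $\J{2}_+$ and $\J{5}$ fall out almost immediately from the definitions together with a short closure observation (Lemma~\ref{L3-2}). Lemma~\ref{l2} is instead the engine of the two \emph{existence lemmas} (Lemmas~\ref{L3-m1} and~\ref{L3-m2}) that feed the $\rhd$-case of the Truth Lemma: one must show that certain candidate successor pairs $(X',Y')$ are inseparable, and it is precisely there that the equivalences $(A \land \Box\lnot A)\rhd C \leftrightarrow A \rhd C$ and $C \rhd (A \land \Box\lnot A) \leftrightarrow C \rhd A$ are invoked. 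So the principal obstacle is not enforcing $\J{5}$ on the frame but establishing inseparability in the successor-existence arguments; your diagnosis is off by one step in the architecture.
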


Our proof of Theorem \ref{T3-1} is based on a semantical proof of CIP for $\IL$ due to Areces, Hoogland and de Jongh \cite{AHD01}.

\subsection{Preparations for our proof of Theorem \ref{T3-1}}

In this subsection, we prepare several definitions and prove some lemmas that are used in our proof of Theorem \ref{T3-1}. 
Only in this section, we write $\vdash A$ instead of $\IL^-(\J{2}_+, \J{5}) \vdash A$ if there is no confusion. 
Notice that by Proposition \ref{Prop:KO}, $\vdash \J{2} \land \J{4} \land \J{4}_+$. 

For a formula $A$, we define the formula ${\sim} A$ as follows:
\begin{eqnarray*}
{\sim} A :\equiv \left\{
\begin{array}{ll}
B & \text{if}\ A \equiv \neg B\ \text{for some formula}\ B, \\
\neg A & \text{otherwise.}
\end{array}
\right.
\end{eqnarray*}

For a set $X$ of formulas, by $\mathcal{L}_X$ we denote the set of all formulas built up from $\bot$ and propositional variables occurring in formulas in $X$. 
We simply write $\mathcal{L}_A$ instead of $\mathcal{L}_{\{ A \}}$. 
For a finite set $X$ of formulas, let $\bigwedge X$ be a conjunction of all elements of $X$. 
For the sake of simplicity, only in this section, $\vdash \bigwedge X \to A$ will be written as $\vdash X \to A$.

For a set $\Phi$ of formulas, we define 
\[
	\Phi_\rhd := \{A : \text{there exists a formula}\ B \ \text{such that}\ A \rhd B \in \Phi\ \text{or}\ B \rhd A \in \Phi \}.
\]
\begin{defn}
A set $\Phi$ of formulas is said to be \textit{adequate} if it satisfies the following conditions:
\begin{enumerate}
\item $\Phi$ is closed under taking subformulas and the ${\sim}$-operation;
\item $\bot \in \Phi_\rhd$;
\item If $A, B \in \Phi_\rhd$, then $A \rhd B \in \Phi$;
\item If $A \in \Phi_\rhd$, then $\Box {\sim} A \in \Phi$.
\end{enumerate}
\end{defn}

Note that for any finite set $X$ of formulas, there exists the smallest finite adequate set $\Phi$ containing $X$. 
We denote this set by $\Phi_X$.

\begin{defn}\leavevmode
\begin{enumerate}
\item A pair $(\Gamma_1, \Gamma_2)$ of finite sets of formulas is said to be \textit{separable} if for some formula $I \in \mathcal{L}_{\Gamma_1} \cap \mathcal{L}_{\Gamma_2}$, $\vdash \Gamma_1 \to I$ and $\vdash \Gamma_2 \to \neg I$. 
A pair is said to be \textit{inseparable} if it is not separable.
\item A pair $(\Gamma_1, \Gamma_2)$ of finite sets of formulas is said to be \textit{complete} if it is inseparable and
\begin{itemize}
\item For each $F \in \Phi_{\Gamma_1}$, either $F \in \Gamma_1$ or ${\sim} F \in \Gamma_1$;
\item For each $F \in \Phi_{\Gamma_2}$, either $F \in \Gamma_2$ or ${\sim} F \in \Gamma_2$. 
\end{itemize}
\end{enumerate}
\end{defn}

We say a finite set $X$ of formulas is \textit{consistent} if $\nvdash X \to \bot$. 
If a pair $(\Gamma_1, \Gamma_2)$ is inseparable, then it can be shown that both of $\Gamma_1$ and $\Gamma_2$ are consistent. 

In the rest of this subsection, we fix some sets $X$ and $Y$ of formulas. Put $\Phi^1 := \Phi_X$ (resp.~$\Phi^2 := \Phi_Y$) and $\mathcal{L}_1 := \mathcal{L}_X$ (resp.~$\mathcal{L}_2 := \mathcal{L}_Y$). 
Let $X' \subseteq \Phi^1$ and $Y' \subseteq \Phi^2$. 
It is easily proved that if $(X', Y')$ is inseparable, then for any formula $A \in \Phi^1$, at least one of $(X' \cup \{A\}, Y')$ and $(X' \cup \{{\sim} A\}, Y')$ is inseparable. 
Also a similar statement holds for $\Phi^2$ and $Y'$. 
Then we obtain the following proposition. 

\begin{prop}
If $(X, Y)$ is inseparable, then there exists some complete pair $\Gamma' = (\Gamma_1, \Gamma_2)$ such that $X \subseteq \Gamma_1 \subseteq \Phi^1$ and $Y \subseteq \Gamma_2 \subseteq \Phi^2$.
\end{prop}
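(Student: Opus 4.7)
The plan is a standard finitary Lindenbaum-style saturation, carried out on the two coordinates one after the other. Since $\Phi^1 = \Phi_X$ and $\Phi^2 = \Phi_Y$ are finite adequate sets containing $X$ and $Y$ respectively, the whole argument involves only finitely many choices, and no enumeration or limit issues arise.

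First, I would enumerate $\Phi^1 = \{A_1, \ldots, A_n\}$ and set $X_0 := X$. Assuming inductively that $(X_i, Y)$ is inseparable and $X \subseteq X_i \subseteq \Phi^1$, invoke the extension fact stated immediately before the proposition: at least one of $(X_i \cup \{A_{i+1}\}, Y)$ and $(X_i \cup \{{\sim} A_{i+1}\}, Y)$ is inseparable. Let $X_{i+1}$ be the corresponding extension. After $n$ steps this produces an inseparable pair $(X_n, Y)$ such that $X \subseteq X_n \subseteq \Phi^1$ and, for every $F \in \Phi^1$, either $F \in X_n$ or ${\sim} F \in X_n$.

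Next, freeze the first coordinate at $X_n$ and perform the symmetric construction on the right coordinate. Enumerate $\Phi^2 = \{B_1, \ldots, B_m\}$, set $Y_0 := Y$, and apply the variant of the extension fact for $\Phi^2$ and $Y$ (with $X_n$ in place of $X'$) to build inseparable pairs $(X_n, Y_0), (X_n, Y_1), \ldots, (X_n, Y_m)$ with $Y \subseteq Y_j \subseteq \Phi^2$ and such that $Y_m$ contains $F$ or ${\sim} F$ for every $F \in \Phi^2$. Setting $\Gamma_1 := X_n$ and $\Gamma_2 := Y_m$ then yields an inseparable pair satisfying both completeness conditions, with $X \subseteq \Gamma_1 \subseteq \Phi^1$ and $Y \subseteq \Gamma_2 \subseteq \Phi^2$.

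There is no real obstacle in the argument: it is the usual finite saturation used in Henkin/tableau completeness proofs. The only point to keep in mind is that the second phase uses the extension fact with a fixed enlarged left coordinate $X_n$ rather than the original $X$; this is precisely what the symmetric version of the fact provides, so inseparability is maintained throughout.
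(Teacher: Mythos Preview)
Your proposal is correct and is precisely the argument the paper has in mind: the paper does not spell out a proof but merely records the one-step extension fact and states that the proposition follows, and your two-phase finite saturation is exactly the intended unfolding of that remark. The only minor observation worth making explicit is that completeness is phrased in terms of $\Phi_{\Gamma_1}$ and $\Phi_{\Gamma_2}$ rather than $\Phi^1$ and $\Phi^2$, but since $\Gamma_i \subseteq \Phi^i$ and $\Phi^i$ is adequate we have $\Phi_{\Gamma_i} \subseteq \Phi^i$, so your saturation over all of $\Phi^i$ already guarantees the required condition.
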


Let $K(\Phi^1, \Phi^2)$ be the set of all complete pairs $(\Gamma_1, \Gamma_2)$ satisfying $\Gamma_1 \subseteq \Phi^1$ and $\Gamma_2 \subseteq \Phi^2$. 
Note that the set $K(\Phi^1, \Phi^2)$ is finite. 
For each $\Gamma \in K(\Phi^1, \Phi^2)$, let $\Gamma_1$ and $\Gamma_2$ be the first and the second components of $\Gamma$, respectively. 

\begin{defn}
We define a binary relation $\prec$ on $K(\Phi^1, \Phi^2)$ as follows:
For $\Gamma, \Delta \in K(\Phi^1, \Phi^2)$, 
\begin{eqnarray*}
\Gamma \prec \Delta :\Leftrightarrow
\begin{array}{l}
\text{For}\ i = \{ 1, 2 \},\ \text{if}\ \Box A \in \Gamma_i, \ \text{then}\  \Box A, A \in \Delta_i, \ \text{and} \\
\text{there exists some}\ \Box B\ \text{such that}\  \Box B \in \Delta_1 \cup \Delta_2 \ \text{and} \ \Box B \not \in \Gamma_1 \cup \Gamma_2.
\end{array}
\end{eqnarray*}
\end{defn}

Then $\prec$ is a transitive and conversely well-founded binary relation on $K(\Phi^1, \Phi^2)$. 

\begin{defn}\label{D3-1}
Let $\Gamma, \Delta \in K(\Phi^1, \Phi^2)$ and $A \in \Phi^1_\rhd \cup \Phi^2_\rhd$. 
We say that $\Delta$ is an \textit{$A$-critical successor} of $\Gamma$ (write $\Gamma \prec_A \Delta$) if the following conditions are met:
\begin{enumerate}
\item $\Gamma \prec \Delta$;
\item If $A \in \Phi^1_\rhd$, then
	\begin{align*}
	\Gamma_1^A:= & \{ \Box {\sim} B, {\sim} B : B \rhd A \in \Gamma_1 \} \subseteq \Delta_1; \\
	\Gamma_2^A:= & \{ \Box {\sim} C, {\sim} C : C \in \Phi^2_\rhd \ \text{and for some}\ I \in \mathcal{L}_1 \cap \mathcal{L}_2, \\
	& \hspace{6em}  \vdash \Gamma_1 \to (I \land \neg A) \rhd A \ \&\ \vdash \Gamma_2 \to C \rhd I \} \subseteq \Delta_2.
	\end{align*}
\item If $A \in \Phi^2_\rhd$, then 
	\begin{align*}
	\Gamma_1^A := & \{ \Box {\sim} B, {\sim} B : B \in \Phi^1_\rhd\ \text{and for some}\ I \in \mathcal{L}_1 \cap \mathcal{L}_2, \\
	& \hspace{6em}  \vdash \Gamma_1 \to B \rhd I\ \&\ \vdash \Gamma_2 \to (I \land \neg A) \rhd A \} \subseteq \Delta_1; \\
	\Gamma_2^A := & \{ \Box {\sim} C, {\sim} C : C \rhd A \in \Gamma_2 \} \subseteq \Delta_2.
	\end{align*}
\end{enumerate}
\end{defn}

From the following claim, Definition \ref{D3-1} makes sense. 

\begin{cl}\label{C3-1}
If $A \in \Phi^1_\rhd \cap \Phi^2_\rhd$, then the sets $\Gamma_1^A$ in clauses 2 and 3 of Definition \ref{D3-1} coincide. 
This is also the case for $\Gamma_2^A$. 
\end{cl}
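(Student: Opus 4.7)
The plan is to verify, for both components of the pair, the equality of the two candidate definitions. The first thing to record is a language observation: since the adequate closure operation never introduces new propositional variables, $\Phi^i \subseteq \mathcal{L}_i$ for $i = 1, 2$, hence $\Phi^1_\rhd \cap \Phi^2_\rhd \subseteq \mathcal{L}_1 \cap \mathcal{L}_2$. Under the hypothesis $A \in \Phi^1_\rhd \cap \Phi^2_\rhd$ the formula $A$ therefore lies in the shared language. With this in hand I would first prove the equality of the two presentations of $\Gamma_1^A$; the case of $\Gamma_2^A$ is then handled by swapping the roles of the indices $1$ and $2$.

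For the inclusion from clause 2 into clause 3, suppose $B \rhd A \in \Gamma_1$. The plan is to take $I := A$ as the witness. This $I$ lies in $\mathcal{L}_1 \cap \mathcal{L}_2$ by the language observation; $\vdash \Gamma_1 \to B \rhd A$ is immediate; and $\vdash (A \land \neg A) \rhd A$ follows by combining $\IL^- \vdash \bot \rhd A$ (an instance of Proposition \ref{Prop:KO}.1 together with the provability of $\Box \top$) with the rule $\R{2}$ applied to the tautology $(A \land \neg A) \to \bot$.

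For the converse inclusion, suppose $B \in \Phi^1_\rhd$ and some $I \in \mathcal{L}_1 \cap \mathcal{L}_2$ satisfies $\vdash \Gamma_1 \to B \rhd I$ and $\vdash \Gamma_2 \to (I \land \neg A) \rhd A$. The key tool is Proposition \ref{Prop:KO}.6, which delivers
\[
\vdash (B \rhd I) \land ((I \land \neg A) \rhd A) \to B \rhd A.
\]
Since $B, A \in \Phi^1_\rhd$ the formula $B \rhd A$ belongs to $\Phi^1$, so completeness of $\Gamma_1$ gives the dichotomy $B \rhd A \in \Gamma_1$ or $\neg (B \rhd A) \in \Gamma_1$. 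In the latter case, combining with the displayed implication yields $\vdash \Gamma_1 \to \neg ((I \land \neg A) \rhd A)$. Since $I, A \in \mathcal{L}_1 \cap \mathcal{L}_2$ the formula $(I \land \neg A) \rhd A$ itself lies in $\mathcal{L}_1 \cap \mathcal{L}_2$, and therefore witnesses a separation of $(\Gamma_1, \Gamma_2)$, contradicting the inseparability built into completeness. Hence $B \rhd A \in \Gamma_1$, as required.

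The only subtle point I expect is precisely in the converse direction: one must notice that $(I \land \neg A) \rhd A$ itself already lives in the shared language $\mathcal{L}_1 \cap \mathcal{L}_2$, so it is available to serve as the separating formula. Everything else is routine: the forward direction reduces to the fact $\IL^- \vdash \bot \rhd A$, and the treatment of $\Gamma_2^A$ is a notational mirror of the $\Gamma_1^A$ case, using the dual direction of Proposition \ref{Prop:KO}.6 in the same way.
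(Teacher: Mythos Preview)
Your proposal is correct and follows essentially the same argument as the paper: both take $I := A$ for the forward direction (the paper derives $(A \land \neg A) \rhd A$ directly from Proposition \ref{Prop:KO}.1 via $\Box \neg(A \land \neg A)$, while you route through $\bot \rhd A$ and $\R{2}$, which is equivalent), and both use Proposition \ref{Prop:KO}.6 together with inseparability and completeness for the converse. The only cosmetic point is that the separating formula is strictly $\neg((I \land \neg A) \rhd A)$ rather than $(I \land \neg A) \rhd A$, but this is immaterial since the shared language is closed under negation.
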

\begin{proof}
We prove only for $\Gamma_1^A$. 
It suffices to show that for any formula $B$, the following are equivalent: 
\begin{enumerate}
	\item $B \rhd A \in \Gamma_1$. 
	\item $B \in \Phi^1_\rhd$ and for some $I \in \mathcal{L}_1 \cap \mathcal{L}_2$, $\vdash \Gamma_1 \to B \rhd I$ and $\vdash \Gamma_2 \to (I \land \neg A) \rhd A$. 
\end{enumerate}

$(1 \Rightarrow 2)$: 
Suppose $B \rhd A \in \Gamma_1$, then $B \in \Phi^1_\rhd$. 
By Proposition \ref{Prop:KO}.1, we have $\IL^- \vdash (A \land \neg A) \rhd A$ because $\IL^- \vdash \Box \neg(A \land \neg A)$. 
Since $A \in \mathcal{L}_1 \cap \mathcal{L}_2$, the clause 2 holds by letting $I \equiv A$. 

$(2 \Rightarrow 1)$: Assume that the clause 2 holds. 
Then $A \rhd B \in \Phi^1$ because $A, B \in \Phi^1_\rhd$. 
Suppose, towards a contradiction, that $\neg (B \rhd A) \in \Gamma_1$. 
By Proposition \ref{Prop:KO}.6, $\vdash (B \rhd I) \land ((I \land \neg A) \rhd A) \to B \rhd A$. 
Then we obtain $\vdash \Gamma_1 \to \neg((I \land \neg A) \rhd A)$. 
This contradicts the inseparability of $\Gamma$ because $(I \land \neg A) \rhd A \in \mathcal{L}_1 \cap \mathcal{L}_2$. 
Hence $\neg (B \rhd A) \notin \Gamma_1$. 
Since $\Gamma$ is complete, $B \rhd A \in \Gamma_1$. 
\end{proof}

\begin{lem}\label{L3-1}
For $\Gamma, \Delta \in K(\Phi^1, \Phi^2)$, if $\Gamma \prec \Delta$, then $\Gamma \prec_\bot \Delta$.
\end{lem}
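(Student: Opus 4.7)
The plan is to exploit Claim \ref{C3-1} to reduce to the simpler of the two possible forms of $\Gamma_1^\bot$ and $\Gamma_2^\bot$, and then to derive the containments from axiom $\J{6}$ alone. Since $\bot \in \Phi^1_\rhd \cap \Phi^2_\rhd$ by the second clause of adequacy, Claim \ref{C3-1} lets me take the easy side of each: from clause 2 of Definition \ref{D3-1}, $\Gamma_1^\bot = \{\Box {\sim} B, {\sim} B : B \rhd \bot \in \Gamma_1\}$, and from clause 3, $\Gamma_2^\bot = \{\Box {\sim} C, {\sim} C : C \rhd \bot \in \Gamma_2\}$. In particular, the interpolant clauses disappear entirely, and only the universal condition $\Gamma \prec \Delta$ remains to be converted into containment.

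Next I would establish the identity $\IL^- \vdash B \rhd \bot \leftrightarrow \Box {\sim} B$ for every formula $B$. Instantiating $\J{6}$ at ${\sim} B$ yields $\Box {\sim} B \leftrightarrow (\neg {\sim} B) \rhd \bot$, and since $\neg {\sim} B \leftrightarrow B$ is a propositional tautology (by case distinction on whether $B$ is itself a negation), the rules $\R{1}$ and $\R{2}$ give $(\neg {\sim} B) \rhd \bot \leftrightarrow B \rhd \bot$. Composing yields the identity.

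It then suffices to show $\Gamma_1^\bot \subseteq \Delta_1$, the case of $\Gamma_2^\bot$ being wholly symmetric. Fix $B$ with $B \rhd \bot \in \Gamma_1$. Adequacy of $\Phi^1$ places $\Box {\sim} B$ in $\Phi^1$, so completeness of $\Gamma$ forces either $\Box {\sim} B \in \Gamma_1$ or ${\sim} \Box {\sim} B = \neg \Box {\sim} B \in \Gamma_1$. The second alternative, combined with $B \rhd \bot \in \Gamma_1$ and the identity from the previous paragraph, makes $\Gamma_1$ inconsistent, contradicting the consistency of $\Gamma_1$ that follows from the inseparability of $\Gamma$. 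Hence $\Box {\sim} B \in \Gamma_1$, and the first clause of $\Gamma \prec \Delta$ then delivers both $\Box {\sim} B$ and ${\sim} B$ into $\Delta_1$, as required. No step here poses a serious obstacle; the only point of care is the ${\sim}$ bookkeeping when invoking completeness and adequacy, since $\Box {\sim} B$ is itself not a negation.
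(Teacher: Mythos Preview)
Your proof is correct and follows essentially the same approach as the paper's: use $\bot \in \Phi^1_\rhd \cap \Phi^2_\rhd$ together with Claim \ref{C3-1} to reduce to showing that $B \rhd \bot \in \Gamma_i$ forces $\Box {\sim} B, {\sim} B \in \Delta_i$, then derive $\Box {\sim} B \in \Gamma_i$ from $\J{6}$, adequacy, and completeness/consistency, and push it to $\Delta_i$ via $\Gamma \prec \Delta$. Your write-up is simply a bit more explicit about the ${\sim}$-bookkeeping and the completeness step than the paper's version.
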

\begin{proof}
Notice that $\bot \in \Phi^1_\rhd \cap \Phi^2_\rhd$. 
By Claim \ref{C3-1}, it suffices to show that if $C \rhd \bot \in \Gamma_1$ (resp.~$\Gamma_2$) then $\Box {\sim} C, {\sim} C \in \Delta_1$ (resp.~$\Delta_2$). 
Suppose $C \rhd \bot \in \Gamma_1$. 
Then by $(\J{6})$, $\vdash \Gamma_1 \to \Box {\sim} C$. 
Note that $\Box {\sim} C \in \Phi^1$, and hence $\Box {\sim} C \in \Gamma_1$. 
By $\Gamma \prec \Delta$, $\Box {\sim} C, {\sim} C \in \Delta_1$. 
The case $C \rhd \bot \in \Gamma_2$ is proved similarly. 
Therefore $\Gamma \prec_\bot \Delta$.
\end{proof}

\begin{lem}\label{L3-2}
For $\Gamma, \Delta, \Theta \in K(\Phi^1, \Phi^2)$ and $A \in \Phi^1_\rhd \cup \Phi^2_\rhd$, if $\Gamma \prec_A \Delta$ and $\Delta \prec \Theta$, then $\Gamma \prec_A \Theta$.
\end{lem}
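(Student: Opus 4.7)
The plan is to split the verification of $\Gamma \prec_A \Theta$ into its two constituent parts from Definition \ref{D3-1}: the bare $\prec$-successor condition, and the $A$-critical set inclusions. For the bare successor condition I would simply invoke the remark, stated just after the definition of $\prec$, that $\prec$ is transitive on $K(\Phi^1, \Phi^2)$; then $\Gamma \prec \Delta$ together with $\Delta \prec \Theta$ yields $\Gamma \prec \Theta$ immediately, taking care of clause 1.

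For the $A$-critical inclusions, the key observation is that the sets $\Gamma_1^A$ and $\Gamma_2^A$ depend only on $\Gamma$ and $A$ and not on the choice of successor. Hence the hypothesis $\Gamma \prec_A \Delta$ already gives $\Gamma_1^A \subseteq \Delta_1$ and $\Gamma_2^A \subseteq \Delta_2$. The elements of each $\Gamma_i^A$ come in packaged pairs $\Box {\sim} B, {\sim} B$, so for every such $B$ we have $\Box {\sim} B \in \Delta_i$. Applying the defining condition of $\Delta \prec \Theta$ to this boxed formula delivers $\Box {\sim} B, {\sim} B \in \Theta_i$, and therefore $\Gamma_i^A \subseteq \Theta_i$ for $i = 1, 2$. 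This establishes whichever of clauses 2 or 3 of Definition \ref{D3-1} is relevant, because the argument only uses the set-theoretic shape of $\Gamma_i^A$ and not the criterion that selected its elements.

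I do not anticipate any serious obstacle. The content of the lemma is a bookkeeping fact resting on two features of the setup already built into the definitions: $\prec$ is transitive, and along $\prec$ each $\Box$-formula in the predecessor carries both itself and its unboxing into the successor. The $A$-critical sets $\Gamma_i^A$ are packaged in exactly that $\{\Box {\sim} B, {\sim} B\}$ shape, so transporting the inclusions $\Gamma_i^A \subseteq \Delta_i$ one further step along $\prec$ is routine, and the case split between $A \in \Phi^1_\rhd$ and $A \in \Phi^2_\rhd$ (and the overlap case handled by Claim \ref{C3-1}) needs no separate treatment.
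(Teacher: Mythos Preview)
Your proposal is correct and follows essentially the same approach as the paper: use transitivity of $\prec$ for clause~1, then transport each pair $\Box{\sim}B,{\sim}B\in\Gamma_i^A\subseteq\Delta_i$ to $\Theta_i$ via the $\Box$-preservation clause of $\Delta\prec\Theta$. The paper's write-up is terser (it handles only the case $A\in\Phi^1_\rhd$ and leaves the transitivity of $\prec$ implicit), but the argument is identical.
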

\begin{proof}
We only prove the case $A \in \Phi^1_\rhd$. 
Let $\Gamma_1^A$ and $\Gamma_2^A$ be the sets as in Definition \ref{D3-1}. 
If $\Box {\sim} B, {\sim} B \in \Gamma_1^A$, then $\Box {\sim} B \in \Delta_1$ because $\Gamma \prec_A \Delta$. 
Thus $\Box {\sim} B, {\sim} B \in \Theta_1$ because $\Delta \prec \Theta$. 
Similarly, if $\Box {\sim} C, {\sim} C \in \Gamma_2^A$, then $\Theta_2$ contains $\Box {\sim} C$ and ${\sim} C$. 
This means $\Gamma \prec_A \Theta$.
\end{proof}

In order to prove the Truth Lemma (Lemma \ref{L3-3}), we show the following two lemmas. 

\begin{lem}\label{L3-m1}
Let $\Gamma \in K(\Phi^1, \Phi^2)$. 
If $\neg (G \rhd F) \in \Gamma_1 \cup \Gamma_2$, then there exists a pair $\Delta \in K(\Phi^1, \Phi^2)$ such that
\begin{enumerate}
\item $\Gamma \prec_F \Delta$;
\item $G, \Box {\sim} F \in \Delta_1 \cup \Delta_2$.
\end{enumerate}
\end{lem}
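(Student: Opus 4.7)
Assume without loss of generality that $\neg(G \rhd F) \in \Gamma_1$; the case $\neg(G \rhd F) \in \Gamma_2$ is handled by the dual clause of Definition~\ref{D3-1}. The plan is to exhibit an inseparable candidate pair and then apply the extension proposition preceding the lemma. Set
\begin{align*}
U &:= \{A, \Box A : \Box A \in \Gamma_1\} \cup \Gamma_1^F \cup \{G, \Box{\sim}F\}, \\
V &:= \{A, \Box A : \Box A \in \Gamma_2\} \cup \Gamma_2^F.
\end{align*}
That $U \subseteq \Phi^1$ and $V \subseteq \Phi^2$ is a routine check by adequacy: $G \rhd F \in \Phi^1$ gives $F \in \Phi^1_\rhd$, whence $\Box{\sim}F \in \Phi^1$. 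Once the inseparability of $(U, V)$ has been established, one extends to a complete pair $\Delta \in K(\Phi^1, \Phi^2)$ with $U \subseteq \Delta_1$ and $V \subseteq \Delta_2$, and checks $\Gamma \prec_F \Delta$ and $G, \Box{\sim}F \in \Delta_1 \cup \Delta_2$. This verification is routine: the $\Box$-inheritance and the $F$-criticality are built into $U, V$, and the required new $\Box$-witness is $\Box{\sim}F$ (or, failing that, another $\Box$-formula supplied during the completion step).

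The main obstacle is the inseparability of $(U, V)$. Assume for contradiction that some $I \in \mathcal{L}_1 \cap \mathcal{L}_2$ satisfies $\vdash U \to I$ and $\vdash V \to \neg I$. Folding each $A \land \Box A$ into $\boxdot A$ rewrites these as $\vdash G \land \boxdot \Psi \to I$ and $\vdash \boxdot \Theta \to \neg I$, where $\Psi \in \mathcal{L}_1$ and $\Theta \in \mathcal{L}_2$ collect the $A$'s (for $\Box A \in \Gamma_i$) together with the ${\sim}$-formulas coming from $\Gamma_i^F$ (and ${\sim}F$ on the $\Gamma_1$-side). From the first, contraposition plus Fact~\ref{l1} and a chain of applications of $\R{2}, \J{3}, \J{5}$, combined with Proposition~\ref{Prop:KO}.1 (to turn each $\Box A \in \Gamma_1$ into $\neg A \rhd F$) and Proposition~\ref{Prop:KO}.2 (to strip the $\Box$-inheritance on the left of $\rhd$), produce a $\rhd$-statement derivable from $\Gamma_1$ whose antecedent involves $I$ and whose consequent is $F$. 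From the second, a symmetric derivation employing Lemma~\ref{l2}.2 together with $\J{2}_+, \J{5}$ to handle the right-hand side of $\rhd$ produces a $\rhd$-statement derivable from $\Gamma_2$ whose antecedent is built from the disjunction $\widetilde C := \bigvee\{C : \Box{\sim}C \in \Gamma_2^F\}$ and whose consequent is $I$. Combining the two via Proposition~\ref{Prop:KO}.6 manufactures a formula in $\mathcal{L}_1 \cap \mathcal{L}_2$ that separates $(\Gamma_1, \Gamma_2)$, contradicting the inseparability of $\Gamma$.

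The hardest part is the orchestration of $\rhd$-introductions in the inseparability argument. Lemma~\ref{l2} is indispensable here: in the absence of $\J{1}$, it is the device that allows $A$ to be interchanged with $A \land \Box \neg A$ (and, via Fact~\ref{l1}, with $A \lor \Diamond A$) on either side of $\rhd$, which is precisely what makes the two $\rhd$-derivations above line up and match the $\Gamma_2^F$-pattern.
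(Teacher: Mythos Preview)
Your approach mirrors the paper's, but there is a genuine gap in securing the condition $\Gamma \prec \Delta$, which requires some $\Box B \in \Delta_1 \cup \Delta_2 \setminus (\Gamma_1 \cup \Gamma_2)$. Your proposed witness $\Box{\sim}F$ need not be new: nothing in $\neg(G \rhd F) \in \Gamma_1$ rules out $\Box{\sim}F \in \Gamma_1$ (semantically, a world can satisfy both $\neg(G \rhd F)$ and $\Box\neg F$). And the hedge ``another $\Box$-formula supplied during the completion step'' is not an argument: the completion procedure only chooses between $E$ and ${\sim}E$ to preserve inseparability, and for an arbitrary completion of your $(U,V)$ every $\Box$-formula in $\Delta_1 \cup \Delta_2$ may already lie in $\Gamma_1 \cup \Gamma_2$.

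The paper repairs this by enlarging $U$ with $\Box{\sim}G$ and proving $\Box{\sim}G \notin \Gamma_1 \cup \Gamma_2$: if $\Box{\sim}G \in \Gamma_1$ then Proposition~\ref{Prop:KO}.1 gives $\vdash \Gamma_1 \to G \rhd F$, contradicting consistency; if $\Box{\sim}G \in \Gamma_2$ then $\Diamond G \in \mathcal{L}_1 \cap \mathcal{L}_2$ separates $\Gamma$. That makes $\Box{\sim}G$ the guaranteed new $\Box$-witness. Inseparability of the enlarged pair still goes through; the extra conjunct $\Box{\sim}G$ on the $\Gamma_1$-side is discharged via Lemma~\ref{l2}.1 (not \ref{l2}.2), which converts $(G \land \Box\neg G) \rhd F$ to $G \rhd F$. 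Your description of the combined separating formula is also off: from a putative separator $J$ the paper obtains $\vdash \Gamma_2 \to J \rhd \bigvee_j I_j$ and $\vdash \Gamma_1 \to \neg(J \rhd \bigvee_j I_j)$ (the $I_j$ being the interpolants built into $\Gamma_2^F$), so the contradiction comes from $\neg(J \rhd \bigvee_j I_j)$, not from anything with $\widetilde C$ on the left.
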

\begin{proof}
Suppose $\neg (G \rhd F) \in \Gamma_1$. 
Let
\begin{align*}
	X':= & \boxdot \Gamma_1 \cup \{ G, \Box {\sim} G, \Box {\sim} F \} \cup \{ \Box {\sim} A, {\sim} A : A \rhd F \in \Gamma_1 \}; \\
	Y':= & \boxdot \Gamma_2 \cup \{ \Box {\sim} B, {\sim} B : B \in \Phi^2_\rhd \ \text{and for some}\  I \in \mathcal{L}_1 \cap \mathcal{L}_2, \\
	& \hspace{10em}  \vdash \Gamma_1 \to (I \land \neg F) \rhd F\ \&\ \vdash \Gamma_2 \to B \rhd I \},
	\end{align*}
where $\boxdot \Gamma_i$ ($i =1,2$) denotes the set $\{ \Box C, C : \Box C \in \Gamma_i \}$.

We claim $\Box {\sim} G \not \in \Gamma_1 \cup \Gamma_2$. 
Assume $\Box \neg G \in \Gamma_1$. 
Then $\vdash \Gamma_1 \to \Box \neg G$. 
By Proposition \ref{Prop:KO}.1, $\vdash \Box \neg G \to G \rhd F$. 
Hence $\vdash \Gamma_1 \to G \rhd F$. 
This implies that $\Gamma_1$ is inconsistent, a contradiction. 
Thus $\Box {\sim} G \not \in \Gamma_1$. 
Moreover, if $\Box {\sim} G \in \Gamma_2$, then $\Diamond G$ separates $(\Gamma_1, \Gamma_2)$ because $\Diamond G \in \mathcal{L}_1 \cap \mathcal{L}_2$. 
This contradicts the inseparability of $\Gamma$. 
Hence $\Box {\sim} G \not \in \Gamma_2$. 

We show that $( X', Y' )$ is inseparable. 
Suppose, for a contradiction, that $J \in \mathcal{L}_1 \cap \mathcal{L}_2$ separates $(X', Y')$. 
From $\vdash Y' \to \neg J$,
\begin{equation*}
\vdash \boxdot \Gamma_2 \to \left( J \to \bigvee_{j \in \kappa}{(\Diamond B_j \lor B_j)} \right),
\end{equation*}
where $\kappa$ is an appropriate index set for $Y'$. 
Then for each $j \in \kappa$, $B_j \in \Phi^2_\rhd$ and there exists a formula $I_j \in \mathcal{L}_1 \cap \mathcal{L}_2$ such that
\begin{equation}\label{Fml3}
\vdash \Gamma_1 \to (I_j \land \neg F) \rhd F, \ \text{and}
\end{equation}
\begin{equation}\label{Fml4}
\vdash \Gamma_2 \to B_j \rhd I_j.
\end{equation}
Then
\[
	\vdash \Gamma_2 \to \Box \left( J \to \bigvee_{j \in \kappa}{(\Diamond B_j \lor B_j)} \right). 
\]
By Proposition \ref{Prop:KO}.2, 
\[
	\vdash \Gamma_2 \to \left( \left( \bigvee_{j \in \kappa}{(\Diamond B_j \lor B_j)} \right) \rhd \bigvee_{j \in \kappa} I_j \to J \rhd \bigvee_{j \in \kappa} I_j \right). 
\]
By (\ref{Fml4}), $\J{2}$, $\J{3}$ and $\J{5}$, we have $\displaystyle \vdash \Gamma_2 \to \left(\bigvee_{j \in \kappa}{(\Diamond B_j \lor B_j)}\right) \rhd \bigvee_{j \in \kappa} I_j$. 
Hence
\begin{equation}\label{Fml5}
\vdash \Gamma_2 \to J \rhd \bigvee_{j \in \kappa}I_j.
\end{equation}

On the other hand, from $\vdash X' \to J$,
\begin{align*}
 &\vdash \boxdot \Gamma_1 \to \left( \neg J \land G \land \Box \neg G \to \bigvee_{A \rhd F \in \Gamma_1}{(\Diamond A \lor A)} \lor \Diamond F \right),\\
 &\vdash \Gamma_1 \to \Box \left( \neg J \land G \land \Box \neg G \to \bigvee_{A \rhd F \in \Gamma_1}{(\Diamond A \lor A)} \lor \Diamond F \right), \\
 &\vdash \Gamma_1 \to \left(\left( \bigvee_{A \rhd F \in \Gamma_1}{(\Diamond A \lor A)} \lor \Diamond F\right) \rhd F \to
	(\neg J \land G \land \Box \neg G) \rhd F \right). \tag{By Proposition \ref{Prop:KO}.2}
\end{align*}

By $\J{2}$, $\J{3}$ and $\J{5}$, we have $\displaystyle \vdash \Gamma_1 \to \left( \bigvee_{A \rhd F \in \Gamma_1}{(\Diamond A \lor A)} \lor \Diamond F\right) \rhd F$. 
Hence we obtain $\vdash \Gamma_1 \to (\neg J \land G \land \Box \neg G) \rhd F$.
By Proposition \ref{Prop:KO}.3, $\vdash \Gamma_1 \to \left( J \rhd F \to (G \land \Box \neg G ) \rhd F \right)$. 
By Lemma \ref{l2}.1, $\vdash \Gamma_1 \to \left( J \rhd F \to G \rhd F \right)$. 
Since $\vdash \Gamma_1 \to \neg ( G \rhd F )$, we get $\vdash \Gamma_1 \to \neg ( J \rhd F)$.
From (\ref{Fml3}) and $\J{3}$, we obtain $\displaystyle \vdash \Gamma_1 \to \left( \bigvee_{j \in \kappa}I_j \land \neg F \right)\rhd F$.
By Proposition \ref{Prop:KO}.6, 
$\displaystyle \vdash \Gamma_1 \to \left( J \rhd \bigvee_{j \in \kappa} I_j \to J \rhd F \right)$. 
Hence 
\begin{equation*}
\vdash \Gamma_1 \to \neg \left(J \rhd \bigvee_{j \in \kappa} I_j \right).
\end{equation*}
From this and (\ref{Fml5}), we conclude that $\neg (J \rhd \bigvee_{j \in \kappa} I_j)$ separates $(\Gamma_1, \Gamma_2)$, a contradiction. 
Therefore $(X', Y')$ is inseparable.

Now let $\Delta \in K(\Phi^1, \Phi^2)$ be a complete pair extending $(X', Y')$. 
We have $\Gamma \prec_F \Delta$ and $G, \Box {\sim} F \in \Delta_1$. 
The other case $\neg(G \rhd F) \in \Gamma_2$ is proved in a similar way.
\end{proof}

\begin{lem}\label{L3-m2}
Let $\Gamma, \Delta \in K(\Phi^1, \Phi^2)$. 
Suppose that $\Gamma \prec_A \Delta$, $G \rhd F \in \Gamma_1 \cup \Gamma_2$ and $G \in \Delta_1 \cup \Delta_2$.
Then there exists a pair $\Theta \in K(\Phi^1, \Phi^2)$ such that:
\begin{itemize}
\item $\Gamma \prec_A \Theta$;
\item $F \in \Theta_1 \cup \Theta_2$;
\item $\Box {\sim} A, {\sim} A \in \Theta_1 \cup \Theta_2$. 
\end{itemize}
\end{lem}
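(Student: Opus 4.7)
My plan is to follow the template of Lemma \ref{L3-m1}, constructing sets $X'$ and $Y'$ whose complete extension $\Theta \in K(\Phi^1, \Phi^2)$ will serve as the required $A$-critical successor of $\Gamma$ containing $F$ together with $\Box {\sim} A$ and ${\sim} A$. A case split is needed on the location of $G \rhd F$ (in $\Gamma_1$ or $\Gamma_2$), of $G$ (in $\Delta_1$ or $\Delta_2$), and on whether $A \in \Phi^1_\rhd$ or $A \in \Phi^2_\rhd$; the cases are symmetric, so I treat the representative one $A \in \Phi^1_\rhd$, $G \rhd F \in \Gamma_1$, $G \in \Delta_1$. Set
\begin{align*}
X' := {} & \boxdot \Gamma_1 \cup \{F, \Box {\sim} A, {\sim} A\} \cup \{\Box {\sim} B, {\sim} B : B \rhd A \in \Gamma_1\}, \\
Y' := {} & \boxdot \Gamma_2 \cup \{\Box {\sim} C, {\sim} C : C \in \Phi^2_\rhd \text{ and for some } I \in \mathcal{L}_1 \cap \mathcal{L}_2, \\
& \hspace{6em} \vdash \Gamma_1 \to (I \land \neg A) \rhd A \text{ and } \vdash \Gamma_2 \to C \rhd I\}.
\end{align*}
Once $(X', Y')$ is shown to be inseparable, any complete extension $\Theta$ automatically satisfies $F, \Box {\sim} A, {\sim} A \in \Theta_1$, $\Gamma_1^A \subseteq \Theta_1$, and $\Gamma_2^A \subseteq \Theta_2$; the first clause of $\Gamma \prec \Theta$ is furnished by $\boxdot \Gamma_i \subseteq \Theta_i$, and the ``new boxed formula'' clause follows by noting that $\Box {\sim} F$ cannot lie in $\Gamma_1 \cup \Gamma_2$ (if $\Box {\sim} F \in \Gamma_1$ then Proposition \ref{Prop:KO}.1, $\J{6}$ and $G \rhd F \in \Gamma_1$ force $\Box \neg G \in \Gamma_1$, hence $\neg G \in \Delta_1$ by $\Gamma \prec \Delta$, contradicting $G \in \Delta_1$; the case $\Gamma_2$ is excluded since $G \in \mathcal{L}_1 \cap \mathcal{L}_2$ would otherwise make $G$ a separator of $\Delta$).

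For the inseparability of $(X', Y')$, assume a separator $J \in \mathcal{L}_1 \cap \mathcal{L}_2$. The side $\vdash Y' \to \neg J$ yields, exactly as in Lemma \ref{L3-m1}, that $\vdash \Gamma_2 \to J \rhd \bigvee_j I_j$ with $\vdash \Gamma_1 \to (I_j \land \neg A) \rhd A$ for each index $j$. From $\vdash X' \to J$ one obtains
\[\vdash \Gamma_1 \to \Box\Bigl(F \to J \lor A \lor \Diamond A \lor \bigvee_{B \rhd A \in \Gamma_1}(\Diamond B \lor B)\Bigr)\]
via Necessitation and the $\GL$-transitivity of $\Box$. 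Proposition \ref{Prop:KO}.2 together with $\J{2}$ applied to $G \rhd F \in \Gamma_1$ then gives, for every $X$, $\vdash \Gamma_1 \to ((J \lor A \lor \Diamond A \lor \bigvee(\Diamond B \lor B)) \rhd X \to G \rhd X)$. Choosing $X = A$, one eliminates the $\Diamond A, \Diamond B, B$ disjuncts by $\J{5}, \J{3}, \J{2}$ using $B \rhd A \in \Gamma_1$, and handles the residual $A$-disjunct via Fact \ref{l1} and Lemma \ref{l2} (so as to avoid the non-theorem $A \rhd A$); a final application of Proposition \ref{Prop:KO}.6 with $\vdash \Gamma_1 \to (\bigvee_j I_j \land \neg A) \rhd A$ yields $\vdash \Gamma_1 \to \neg (J \rhd \bigvee_j I_j)$, contradicting the inseparability of $\Gamma$ through the separator $J \rhd \bigvee_j I_j \in \mathcal{L}_1 \cap \mathcal{L}_2$.

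The main obstacle is handling the spurious $A$-disjunct that appears in the derived implication because ${\sim} A \in X'$. In contrast to Lemma \ref{L3-m1}, where every disjunct of the analogous formula can be driven into $F$ by $\J{5}$ together with the hypothesis $B \rhd F \in \Gamma_1$, the natural target here is $A$ itself, but $A \rhd A$ is not provable in $\IL^-(\J{2}_+, \J{5})$. The critical move is to use Fact \ref{l1} to rewrite $A \lor \Diamond A$ as $(A \land \Box \neg A) \lor \Diamond(A \land \Box \neg A)$, apply $\J{5}$ to the diamond, and invoke Lemma \ref{l2} to pass between $A$ and $A \land \Box \neg A$ on both sides of $\rhd$; coordinating these rewrites with $\J{2}_+$ via Proposition \ref{Prop:KO}.6 and $(\bigvee_j I_j \land \neg A) \rhd A$ is the delicate part of the argument.
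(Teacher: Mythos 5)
Your overall architecture (build $(X',Y')$, prove inseparability, extend to a complete $\Theta$) matches the paper's, and you correctly locate the central difficulty, but your resolution of it does not work. Having put $F$ and ${\sim}A$ into $X'$ and then moved $A$ to the right of the derived implication, you need $(J \lor A \lor \Diamond A \lor \bigvee(\Diamond B \lor B)) \rhd A$, and the disjunct $A$ forces you to prove $A \rhd A$, which, as you note, is not a theorem of $\IL^-(\J{2}_+,\J{5})$. The repair you propose via Fact \ref{l1} and Lemma \ref{l2} is circular: rewriting the disjunct as $(A \land \Box\neg A) \lor \Diamond(A\land\Box\neg A)$ reduces the problem to $(A\land\Box\neg A)\rhd A$, which by Lemma \ref{l2}.1 is \emph{equivalent} to $A \rhd A$ and hence equally unprovable. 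The paper avoids the issue by never letting $A$ (or $J$) migrate to the right: from $\vdash X' \to J$ it extracts $\vdash \boxdot\Gamma_1 \to (\neg J \land \Box\neg F \land F \land \neg A \to \Diamond A \lor \bigvee_{B\rhd A\in\Gamma_1}(\Diamond B \lor B))$, whose consequent consists only of formulas provably $\rhd A$ (via $\J{5}$, $\J{2}$, $\J{3}$ and $B \rhd A \in \Gamma_1$); Proposition \ref{Prop:KO}.2 then yields $(\neg J \land \Box\neg F \land F\land\neg A)\rhd A$ with the negations kept on the \emph{left} of $\rhd$, and Propositions \ref{Prop:KO}.3 and \ref{Prop:KO}.6, together with $\vdash\Gamma_1 \to G \rhd (F\land\Box\neg F)$ from Lemma \ref{l2}.2, convert $J \rhd A$ into $G \rhd A$, contradicting $\neg(G\rhd A)\in\Gamma_1$. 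That last membership is itself a step you omit: it follows from $\Gamma\prec_A\Delta$, $G\in\Delta_1$ and the consistency of $\Delta_1$, and without it there is nothing to contradict once $G\rhd A$ is reached.

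Two further points. First, you must actually put $\Box{\sim}F$ into $X'$: showing $\Box{\sim}F\notin\Gamma_1\cup\Gamma_2$ does not supply the ``new boxed formula'' required for $\Gamma\prec\Theta$ unless $\Box{\sim}F$ ends up in $\Theta_1$, which your $X'$ does not guarantee (and once it is in $X'$, the extra conjunct $\Box\neg F$ is precisely what forces the detour through Lemma \ref{l2}.2 above). Second, the case $A\in\Phi^2_\rhd$ is not symmetric to $A\in\Phi^1_\rhd$ when $G\rhd F\in\Gamma_1$: the critical data $\Gamma_1^A,\Gamma_2^A$ then sit on the opposite side from the $\rhd$-formula, and the paper's contradiction in that case is obtained not from $\neg(G\rhd A)\in\Gamma_1$ but by deriving $\vdash\Gamma_1\to G\rhd(\bigvee_j I_j\lor J)$ and $\vdash\Gamma_2\to((\bigvee_j I_j\lor J)\land\neg A)\rhd A$ and concluding ${\sim}G\in\Delta_1$ from the definition of $\prec_A$, against $G\in\Delta_1$. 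That case needs its own argument rather than an appeal to symmetry.
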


\begin{proof}
Suppose $G \rhd F \in \Gamma_1$. 
From $G \in \Delta_1 \cup \Delta_2$, we obtain $G \in \Delta_1$ by the inseparability of $\Delta$. 
We distinguish the following two cases:

(Case 1): Assume $A \in \Phi^1_\rhd$. 
Then $G \rhd A \in \Phi^1$. 
If $G \rhd A \in \Gamma_1$, then ${\sim} G \in \Delta_1$ because $\Gamma \prec_A \Delta$. 
This contradicts the consistency of $\Delta_1$. 
Therefore $G \rhd A \notin \Gamma_1$. 
Since $\Gamma$ is complete, we have $\neg (G \rhd A) \in \Gamma_1$. 

Let:
\begin{align*}
	X':= & \boxdot \Gamma_1 \cup \{ \Box {\sim} F, F, \Box {\sim} A, {\sim} A \} \cup \{ \Box {\sim} B, {\sim} B : B \rhd A \in \Gamma_1 \};\\
	Y':= & \boxdot \Gamma_2 \cup \{ \Box {\sim} C, {\sim} C : C \in \Phi^2_\rhd \ \text{and for some}\  I \in \mathcal{L}_1 \cap \mathcal{L}_2, \\
	& \hspace{10em}  \vdash \Gamma_1 \to (I \land \neg A) \rhd A\ \&\ \vdash \Gamma_2 \to C \rhd I \}.
\end{align*}
We show $\Box {\sim} F \not \in \Gamma_1 \cup \Gamma_2$. 
If $\Box {\sim} G \in \Gamma_1$, then ${\sim} G \in \Delta_1$ because $\Gamma \prec \Delta$. 
This contradicts the consistency of $\Delta_1$. 
Hence $\Box {\sim} G \notin \Gamma_1$. 
Since $\vdash \Gamma_1 \to (G \rhd F) \land \Diamond G$, we have $\vdash \Gamma_1 \to \Diamond F$ by $\J{4}$.
Therefore $\Box {\sim} F \notin \Gamma_1$. 
Moreover, if $\Box {\sim} F \in \Gamma_2$, then $\Diamond F$ would separate $(\Gamma_1, \Gamma_2)$, a contradiction. 
Thus $\Box {\sim} F \notin \Gamma_2$. 

We show that $(X', Y')$ is inseparable. 
Suppose, for a contradiction, that for some $J \in \mathcal{L}_1 \cap \mathcal{L}_2$,
$\vdash X' \to J$ and $\vdash Y' \to \neg J$.

From $\vdash Y' \to \neg J$,
\begin{equation*}
\vdash \boxdot \Gamma_2 \to \left( J \to \bigvee_{j \in \kappa}{(\Diamond C_j \lor C_j)} \right), 
\end{equation*}
where $\kappa$ is an appropriate index set such that for each $j \in \kappa$, $C_j \in \Phi^2_\rhd$ and there exists a formula $I_j \in \mathcal{L}_1 \cap \mathcal{L}_2$ such that $\vdash \Gamma_1 \to (I_j \land \neg A) \rhd A$ and $\vdash \Gamma_2 \to C_j \rhd I_j$. 
Then 
\[
\vdash \Gamma_2 \to \Box \left( J \to \bigvee_{j \in \kappa}{(\Diamond C_j \lor C_j)} \right). 
\]
Since $\displaystyle \vdash \Gamma_2 \to \left(\bigvee_{j \in \kappa}{(\Diamond C_j \lor C_j)}\right) \rhd \bigvee I_j$, by Proposition \ref{Prop:KO}.2, we obtain
\begin{align}\label{Fml6}
\vdash \Gamma_2 \to J \rhd \bigvee I_j.
\end{align}

On the other hand, from $\vdash X' \to J$,
\begin{align*}
&\vdash \boxdot \Gamma_1 \to \left( \neg J \land \Box \neg F \land F \land \neg A \to \Diamond A \lor \bigvee_{B \rhd A \in \Gamma_1}{(\Diamond B \lor B)} \right), \\
&\vdash \Gamma_1 \to \Box \left( \neg J \land \Box \neg F \land F \land \neg A \to \Diamond A \lor \bigvee_{B \rhd A \in \Gamma_1}{(\Diamond B \lor B)} \right). 
\end{align*}
Then by Proposition \ref{Prop:KO}.2, we obtain 
$\vdash \Gamma_1 \to (\neg J \land \Box \neg F \land F \land \neg A) \rhd A$ because $\displaystyle \vdash \Gamma_1 \to \left( \Diamond A \lor \bigvee_{B \rhd A \in \Gamma_1}{(\Diamond B \lor B)} \right) \rhd A$.
By Proposition \ref{Prop:KO}.3, $\vdash \Gamma_1 \to \left( J \rhd A \to (\Box \neg F \land F \land \neg A) \rhd A \right)$.
By Lemma \ref{l2}.2, we have $\vdash \Gamma_1 \to G \rhd (\Box \neg F \land F)$. 
Then by Proposition \ref{Prop:KO}.6, we obtain $\vdash \Gamma_1 \to ((\Box \neg F \land F \land \neg A) \rhd A \to G \rhd A)$. 
Thus, $\vdash \Gamma_1 \to (J \rhd A \to G \rhd A)$. 
Since $\neg (G \rhd A) \in \Gamma_1$, we get $\vdash \Gamma_1 \to \neg (J \rhd A)$.
Since $\displaystyle \vdash \Gamma_1 \to \left(\bigvee_{j \in \kappa} I_j \land \neg A \right) \rhd A$, we have $\displaystyle \vdash \Gamma_1 \to \left( J \rhd \bigvee_{j \in \kappa} I_j \to J \rhd A \right)$ by Proposition \ref{Prop:KO}.6. 
Therefore 
\begin{align*}
\vdash \Gamma_1 \to \neg \left(J \rhd \bigvee_{j \in \kappa} I_j \right).
\end{align*}

From this and (\ref{Fml6}), we conclude that $\neg (J \rhd \bigvee_{j \in \kappa} I_j)$ separates $(\Gamma_1, \Gamma_2)$, a contradiction.

(Case 2): Assume $A \in \Phi^2_\rhd$. Let:
\begin{align*}
X':= & \boxdot \Gamma_1 \cup \{ \Box {\sim} F, F \}\\
& \cup \{ \Box {\sim} B, {\sim} B : B \in \Phi^1_\rhd\  \text{and for some}\  I \in \mathcal{L}_1 \cap \mathcal{L}_2, \\
& \hspace{7em}  \vdash \Gamma_1 \to B \rhd I \ \&\ \vdash \Gamma_2 \to (I \land \neg A) \rhd A \}; \\
Y':= & \boxdot \Gamma_2 \cup \{ \Box {\sim} A, {\sim} A \} \cup \{ \Box {\sim} C, {\sim} C : C \rhd A \in \Gamma_2 \}.
\end{align*}

As in Case 1, it can be shown $\Box {\sim} F \not \in \Gamma_1 \cup \Gamma_2$. 
We prove that $(X', Y')$ is inseparable. 
Suppose, for a contradiction, that for some $J \in \mathcal{L}_1 \cap \mathcal{L}_2$, $\vdash X' \to J$ and $\vdash Y' \to \neg J$.
From $\vdash X' \to J$,
\[
\vdash \boxdot \Gamma_1 \to \left( \Box \neg F \land F \land \neg J \to \bigvee_{j\in\kappa}{(\Diamond B_j \lor B_j)} \right), 
\]
where $\kappa$ is an appropriate index set such that for each $j \in \kappa$, $B_j \in \Phi^1_\rhd$ and there exists a formula $I_j \in \mathcal{L}_1 \cap \mathcal{L}_2$ such that $\vdash \Gamma_1 \to B_j \rhd I_j$ and $\vdash \Gamma_2 \to (I_j \land \neg A) \rhd A$. 
Then 
\[
\vdash \Gamma_1 \to \Box  \left( \Box \neg F \land F \land \neg J \to \bigvee_{j\in\kappa}{(\Diamond B_j \lor B_j)} \right). 
\]
Since $\displaystyle \vdash \Gamma_1 \to \left(\bigvee_{j \in \kappa}{(\Diamond B_j \lor B_j)} \right) \rhd \bigvee I_j$, we have 
\[
	\vdash \Gamma_1 \to (\Box \neg F \land F \land \neg J) \rhd \bigvee_{j \in \kappa} I_j
\]
by Proposition \ref{Prop:KO}.2. 
Then 
\begin{align*}
&\vdash \Gamma_1 \to \left(\Box \neg F \land F \land \bigwedge_{j \in \kappa} \neg I_j \land \neg J \right) \rhd \left( \bigvee_{j \in \kappa} I_j \lor J \right),\\
&\vdash \Gamma_1 \to \left(\Box \neg F \land F \land \neg \left( \bigvee_{j \in \kappa} I_j \lor J \right) \right) \rhd \left( \bigvee_{j \in \kappa} I_j \lor J \right).
\end{align*}
Since $G \rhd F \in \Gamma_1$, by Lemma \ref{l2}.2, we obtain $\vdash \Gamma_1 \to G \rhd (\Box \neg F \land F)$.
Therefore by Proposition \ref{Prop:KO}.6, we obtain
\begin{equation}\label{Fml7}
\vdash \Gamma_1 \to G \rhd \left(\bigvee_{j \in \kappa} I_j \lor J \right).
\end{equation}

On the other hand, from $\vdash Y' \to \neg J$,
\begin{align*}
& \vdash \boxdot \Gamma_2 \to \left( J \land \neg A \to \Diamond A \lor \bigvee_{C \rhd A \in \Gamma_2}{(\Diamond C \lor C)} \right),\\
& \vdash \Gamma_2 \to \Box \left( J \land \neg A \to \Diamond A \lor \bigvee_{C \rhd A \in \Gamma_2}{(\Diamond C \lor C)}\right). 
\end{align*}
Since $\displaystyle \vdash \Gamma_2 \to \left( \Diamond A \lor \bigvee_{C \rhd A \in \Gamma_2}{(\Diamond C \lor C)} \right) \rhd A$, we obtain $\vdash \Gamma_2 \to (J \land \neg A) \rhd A$ by Proposition \ref{Prop:KO}.2.
Since $\displaystyle \vdash \Gamma_2 \to \left(\bigvee_{j \in \kappa} I_j \land \neg A \right) \rhd A$, we have 
\begin{align*}
\vdash \Gamma_2 \to \left( \left(\bigvee_{j \in \kappa} I_j \lor J \right) \land \neg A \right) \rhd A. \end{align*}

From this and (\ref{Fml7}), we conclude ${\sim} G \in \Delta_1$ because $\Gamma \prec_A \Delta$. 
This contradicts the consistency of $\Delta_1$.

In both cases, $(X', Y')$ is inseparable, and hence we can obtain a complete pair $\Theta \in K(\Phi^1, \Phi^2)$ which extends $(X', Y')$ and satisfies the desired conditions. 
\end{proof}

\subsection{Proof of Theorem \ref{T3-1}}

We are ready to prove Theorem \ref{T3-1}. 

\begin{proof}[Proof of Theorem \ref{T3-1}]
Suppose that the implication $A_0 \to B_0$ has no interpolant, and we would like to show $\nvdash A_0 \to B_0$.
It follows that $(\{A_0\}, \{ \neg B_0\} )$ is inseparable. 
Let $\Phi^1$ (resp.~$\Phi^2$) be the smallest finite adequate set containing $A_0$ (resp.~$\neg B_0$), and put $K := K(\Phi^1, \Phi^2)$. 
There exists $\Gamma' \in K(\Phi^1, \Phi^2)$ such that $A_0 \in \Gamma_1'$ and $\neg B_0 \in \Gamma_2'$.
For $\Gamma \in K$, we define inductively the rank of $\Gamma$ (write $\rank{(\Gamma)}$) as $\rank{(\Gamma)} := \sup \{ \rank{(\Delta)} +1 : \Gamma \prec \Delta \}$, where $\sup \emptyset = 0$. This is well-defined because $\prec$ is conversely well-founded.

For finite sequences $\tau$ and $\sigma$ of formulas, let $\tau \subseteq \sigma$ denote that $\sigma$ is an end-extension of $\tau$. 
Let $\tau \ast \langle A \rangle$ be the sequence obtained from $\tau$ by adding $A$ as the last element. 

We define an $\IL^-$-model $M = \langle W, R, \{S_w\}_{w \in W}, \Vdash \rangle$ as follows:
\begin{eqnarray*}
W := \{ \langle \Gamma, \tau \rangle : \Gamma \in K \ \text{and} \ \tau \ \text{is a finite sequence of elements of}\\
\Phi_\rhd^1 \cup \Phi_\rhd^2 \ \text{with}\  \rank (\Gamma) + |\tau| \leq \rank (\Gamma') \};
\end{eqnarray*}
\begin{equation*}
\langle \Gamma, \tau \rangle R \langle \Delta, \sigma \rangle :\Leftrightarrow \Gamma \prec \Delta \ \text{and}\  \tau \subsetneq \sigma;
\end{equation*}
\begin{align*}
	& \langle \Delta, \sigma \rangle S_{\langle \Gamma, \tau \rangle} \langle \Theta, \rho \rangle \\
	&:\Leftrightarrow \left\{
		\begin{array}{l}
		\langle \Gamma, \tau \rangle R \langle \Delta, \sigma \rangle,
		\langle \Gamma, \tau \rangle R \langle \Theta, \rho \rangle \ \text{and}\ \\
		\text{if}\ \tau \ast \langle A \rangle \subseteq \sigma, \Gamma \prec_A \Delta
		\ \text{and}\ \Box {\sim} A \in \Delta_1 \cup \Delta_2, \\
		\text{then}\  \tau \ast \langle A \rangle \subseteq \rho, \Gamma \prec_A \Theta \ \text{and}\ \Box {\sim} A, {\sim} A \in \Theta_1 \cup \Theta_2; 
		\end{array}
	\right.
	\end{align*}
\begin{eqnarray*}
\langle \Gamma, \tau \rangle \Vdash p :\iff p \in \Gamma_1\cup \Gamma_2.
\end{eqnarray*}

\begin{cl}
$\IL^-(\J{2}_+, \J{5})$ is valid in the frame of $M$.
\end{cl}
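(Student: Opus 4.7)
The plan is to invoke Proposition \ref{Prop:FC} and reduce validity of $\IL^-(\J{2}_+, \J{5})$ to four frame conditions: (i) the underlying structure is actually an $\IL^-$-frame, (ii) $\J{5}$ holds, i.e.\ $wRx$ and $xRy$ imply $xS_w y$, (iii) $\J{4}_+$ holds, i.e.\ each $S_w$ is a relation on $\uparrow(w)$, and (iv) each $S_w$ is transitive (which together with $\J{4}_+$ is equivalent to $\J{2}_+$).

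First I would dispatch (i) and (iii), which are mostly bookkeeping. Transitivity of $R$ is inherited from transitivity of $\prec$ on $K$ and of strict end-extension $\subsetneq$ on finite sequences. Converse well-foundedness follows from the rank: if $\langle \Gamma, \tau \rangle R \langle \Delta, \sigma \rangle$, then by the definition of $\mathrm{rank}$ we have $\rank(\Delta) < \rank(\Gamma)$, so any infinite $R$-chain would give an infinite strictly decreasing chain of natural numbers. The condition $\langle \Delta, \sigma \rangle S_{\langle \Gamma, \tau \rangle} \langle \Theta, \rho \rangle \Rightarrow \langle \Gamma, \tau \rangle R \langle \Delta, \sigma \rangle$ is built directly into the definition of $S_{\langle \Gamma, \tau \rangle}$, and by symmetry so is $\langle \Gamma, \tau \rangle R \langle \Theta, \rho \rangle$, giving $S_w \subseteq \uparrow(w) \times \uparrow(w)$ and hence $\J{4}_+$.

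Next I would verify $\J{5}$: assume $\langle \Gamma, \tau \rangle R \langle \Delta, \sigma \rangle$ and $\langle \Delta, \sigma \rangle R \langle \Theta, \rho \rangle$, and check the three clauses defining $\langle \Delta, \sigma \rangle S_{\langle \Gamma, \tau \rangle} \langle \Theta, \rho \rangle$. The two $R$-requirements hold, the first by assumption and the second by transitivity of $R$ just shown. For the implication clause, suppose $\tau \ast \langle A \rangle \subseteq \sigma$, $\Gamma \prec_A \Delta$ and $\Box{\sim} A \in \Delta_1 \cup \Delta_2$. Then $\tau \ast \langle A \rangle \subseteq \sigma \subsetneq \rho$; Lemma \ref{L3-2} applied to $\Gamma \prec_A \Delta$ and $\Delta \prec \Theta$ gives $\Gamma \prec_A \Theta$; and from $\Box{\sim} A \in \Delta_i$ together with $\Delta \prec \Theta$ we obtain $\Box{\sim} A, {\sim} A \in \Theta_i$ directly from the definition of $\prec$.

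The main step is (iv), transitivity of $S_w$. Given $w = \langle \Gamma, \tau \rangle$ with $x S_w y$ and $y S_w z$ (writing $x = \langle \Delta, \sigma \rangle$, $y = \langle \Theta, \rho \rangle$, $z = \langle \Xi, \mu \rangle$), the $R$-components of $x S_w z$ are immediate. For the implication clause, suppose $\tau \ast \langle A \rangle \subseteq \sigma$, $\Gamma \prec_A \Delta$ and $\Box{\sim} A \in \Delta_1 \cup \Delta_2$. Applying the defining clause of $x S_w y$ produces $\tau \ast \langle A \rangle \subseteq \rho$, $\Gamma \prec_A \Theta$ and, crucially, $\Box{\sim} A \in \Theta_1 \cup \Theta_2$. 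These three facts are exactly the hypotheses needed to fire the defining clause of $y S_w z$ with the same $A$, yielding $\tau \ast \langle A \rangle \subseteq \mu$, $\Gamma \prec_A \Xi$ and $\Box{\sim} A, {\sim} A \in \Xi_1 \cup \Xi_2$, as required. The slightly delicate point here, and the reason the definition of $S_w$ carries the extra conjunct ``$\Box{\sim} A \in \Delta_1 \cup \Delta_2$'' in its conclusion, is precisely to make this chaining go through; no further appeal to Lemmas \ref{L3-m1} or \ref{L3-m2} is needed.
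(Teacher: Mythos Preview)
Your proposal is correct and follows essentially the same route as the paper: you reduce to the frame conditions of Proposition~\ref{Prop:FC}, verify that $S_w \subseteq {\uparrow}(w) \times {\uparrow}(w)$ directly from the definition, check $\J{5}$ using Lemma~\ref{L3-2} and the $\prec$-clause to push $\Box{\sim}A$ and ${\sim}A$ into $\Theta$, and establish transitivity of $S_w$ by chaining the implication clause through the intermediate node exactly as the paper does. The only difference is that you spell out transitivity and converse well-foundedness of $R$ in more detail than the paper, which simply declares them clear.
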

\begin{proof}
It is clear that $R$ is transitive and conversely well-founded. 
\begin{itemize}

\item Suppose $\langle \Delta, \sigma \rangle S_{\langle \Gamma, \tau \rangle} \langle \Theta, \rho \rangle$. 
Then we have $\langle \Gamma, \tau \rangle R \langle \Theta, \rho \rangle$ by the definition of $S_{\langle \Gamma, \tau \rangle}$. 
Therefore $\J{4}_+$ is valid in the frame of $M$.

\item Suppose $\langle \Delta, \sigma \rangle S_{\langle \Gamma, \tau \rangle} \langle \Theta, \rho \rangle S_{\langle \Gamma, \tau \rangle} \langle \Lambda, \pi \rangle$.
Then we have $\langle \Gamma, \tau \rangle R \langle \Delta, \sigma \rangle$ and $\langle \Gamma, \tau \rangle R \langle \Lambda, \pi \rangle$. 

Assume $\tau \ast \langle A \rangle \subseteq \sigma$, $\Gamma \prec_A \Delta$ and $\Box {\sim} A \in \Delta_1 \cup \Delta_2$. By $\langle \Delta, \sigma \rangle S_{\langle \Gamma, \tau \rangle} \langle \Theta, \rho \rangle$, we obtain
$\tau \ast \langle A \rangle \subseteq \rho$, $\Gamma \prec_A \Theta$ and $\Box {\sim} A \in \Theta_1 \cup \Theta_2$.
By $\langle \Theta, \rho \rangle S_{\langle \Gamma, \tau \rangle} \langle \Lambda, \pi \rangle$, we conclude
$\tau \ast \langle A \rangle \subseteq \pi$, $\Gamma \prec_A \Lambda$ and $\Box {\sim} A, {\sim} A \in \Lambda_1 \cup \Lambda_2$.

Thus $\langle \Delta, \sigma \rangle S_{\langle \Gamma, \tau \rangle} \langle \Lambda, \pi \rangle$.
We obtain that $\J{2}_+$ is valid in the frame of $M$. 

\item Suppose $\langle \Gamma, \tau \rangle R \langle \Delta, \sigma \rangle R \langle \Theta, \rho \rangle$.
Then $\langle \Gamma, \tau \rangle R \langle \Delta, \sigma \rangle$, and $\langle \Gamma, \tau \rangle R \langle \Theta, \rho \rangle$ by the transitivity of $R$. 

Assume $\tau \ast \langle A \rangle \subseteq \sigma$, $\Gamma \prec_A \Delta$ and $\Box {\sim} A \in \Delta_1 \cup \Delta_2$. 
Since $\sigma \subseteq \rho$, we have $\tau \ast \langle A \rangle \subseteq \rho$. 
Since $\Delta \prec \Theta$, we have $\Box {\sim} A, {\sim} A \in \Theta_1 \cup \Theta_2$. 
Also by Lemma \ref{L3-2}, $\Gamma \prec_A \Theta$.

Thus $\langle \Delta, \sigma \rangle S_{\langle \Gamma, \tau \rangle} \langle \Theta, \rho \rangle$.
We conclude that $\J{5}$ is valid in the frame of $M$. 
\end{itemize}
\end{proof}
\begin{lem}[The Truth Lemma]\label{L3-3}
For $B \in \Phi^1 \cup \Phi^2$ and $\langle \Gamma, \tau \rangle \in W$, the following are equivalent:
\begin{enumerate}
\item $B \in \Gamma_1 \cup \Gamma_2$. 
\item $\langle \Gamma, \tau \rangle \Vdash B$.
\end{enumerate}
\end{lem}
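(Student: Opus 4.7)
The plan is to proceed by induction on the complexity of $B$. The cases $B \equiv \bot$, $B$ a propositional variable, and $B \equiv C \to D$ follow in the standard way from the consistency (a consequence of inseparability) and the completeness of $\Gamma$, together with the closure of $\Phi^{1}$ and $\Phi^{2}$ under subformulas and the ${\sim}$-operation. For $B \equiv \Box C$ the forward direction is immediate from the clause on $\Box$-formulas in the definition of $\prec$, while the backward direction is handled by a Henkin-style extension inside $K(\Phi^{1}, \Phi^{2})$ along the same lines as Lemma~\ref{L3-m1} but without the critical-successor machinery.

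The heart of the argument is the case $B \equiv G \rhd F$. For the direction $1 \Rightarrow 2$, I assume $G \rhd F \in \Gamma_{1} \cup \Gamma_{2}$ and take an arbitrary $\langle \Delta, \sigma \rangle \in W$ with $\langle \Gamma, \tau \rangle R \langle \Delta, \sigma \rangle$ and $\langle \Delta, \sigma \rangle \Vdash G$; the induction hypothesis gives $G \in \Delta_{1} \cup \Delta_{2}$. Since $\tau \subsetneq \sigma$, there is a unique formula $A$ with $\tau \ast \langle A \rangle \subseteq \sigma$. If $\Gamma \prec_{A} \Delta$ and $\Box {\sim} A \in \Delta_{1} \cup \Delta_{2}$, I apply Lemma~\ref{L3-m2} with this $A$; otherwise, Lemma~\ref{L3-1} supplies $\Gamma \prec_{\bot} \Delta$ and I apply Lemma~\ref{L3-m2} with $\bot$ in place of $A$. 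In either subcase I obtain $\Theta \in K(\Phi^{1}, \Phi^{2})$ with $\Gamma \prec \Theta$ and $F \in \Theta_{1} \cup \Theta_{2}$. Setting $\rho := \tau \ast \langle A \rangle$ in the first subcase and $\rho := \tau \ast \langle \bot \rangle$ in the second, the bound $\rank(\Theta) + |\rho| \leq \rank(\Gamma')$ is immediate from $\rank(\Theta) < \rank(\Gamma)$, so $\langle \Theta, \rho \rangle \in W$. A direct inspection of the defining clauses of $S_{\langle \Gamma, \tau \rangle}$ confirms $\langle \Delta, \sigma \rangle S_{\langle \Gamma, \tau \rangle} \langle \Theta, \rho \rangle$, and the induction hypothesis yields $\langle \Theta, \rho \rangle \Vdash F$.

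For the direction $2 \Rightarrow 1$ I argue contrapositively. If $G \rhd F \notin \Gamma_{1} \cup \Gamma_{2}$, completeness of $\Gamma$ forces $\neg(G \rhd F) \in \Gamma_{1} \cup \Gamma_{2}$, and Lemma~\ref{L3-m1} delivers $\Delta \in K(\Phi^{1}, \Phi^{2})$ with $\Gamma \prec_{F} \Delta$ and $G, \Box {\sim} F \in \Delta_{1} \cup \Delta_{2}$. Put $\sigma := \tau \ast \langle F \rangle$; the rank condition is routine, so $\langle \Gamma, \tau \rangle R \langle \Delta, \sigma \rangle$ and the induction hypothesis yields $\langle \Delta, \sigma \rangle \Vdash G$. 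For any $\langle \Theta, \rho \rangle$ with $\langle \Delta, \sigma \rangle S_{\langle \Gamma, \tau \rangle} \langle \Theta, \rho \rangle$, the unique $A$ with $\tau \ast \langle A \rangle \subseteq \sigma$ is $F$ itself, so the defining clause of $S_{\langle \Gamma, \tau \rangle}$ forces ${\sim} F \in \Theta_{1} \cup \Theta_{2}$; consistency then gives $F \notin \Theta_{1} \cup \Theta_{2}$ and the induction hypothesis gives $\langle \Theta, \rho \rangle \not\Vdash F$. Hence $\langle \Gamma, \tau \rangle \not\Vdash G \rhd F$, as required.

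The main obstacle is the bookkeeping in the direction $1 \Rightarrow 2$: the chosen instance of Lemma~\ref{L3-m2} must produce a $\Theta$ which, together with a well-chosen $\rho$, simultaneously (i) sits as an $R$-successor of $\langle \Gamma, \tau \rangle$, (ii) lies inside $W$ by meeting the rank constraint, and (iii) discharges every conditional clause in the definition of $S_{\langle \Gamma, \tau \rangle}$. Lemma~\ref{L3-1} is the technical ingredient that legitimises the uniform fallback $A = \bot$ whenever the $A$ read off from $\sigma$ fails to witness a critical successor of $\Delta$ over $\Gamma$.
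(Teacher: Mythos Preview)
Your argument is correct and follows the paper's proof essentially verbatim: the same induction, the same two-case split in the $G \rhd F$ forward direction via Lemma~\ref{L3-m2} (using the unique $A$ with $\tau \ast \langle A \rangle \subseteq \sigma$, or falling back to $\bot$ via Lemma~\ref{L3-1}), and the same contrapositive backward direction via Lemma~\ref{L3-m1}. One small point of presentation: in the first subcase your summary ``$\Gamma \prec \Theta$ and $F \in \Theta_1 \cup \Theta_2$'' understates what Lemma~\ref{L3-m2} actually gives you---you also need $\Gamma \prec_A \Theta$ and $\Box{\sim}A, {\sim}A \in \Theta_1 \cup \Theta_2$ to verify the conditional clause in $S_{\langle \Gamma,\tau\rangle}$, so it is worth stating those explicitly rather than deferring to ``direct inspection''.
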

\begin{proof}
Induction on the construction of $B$. 
We only prove for $B \equiv G \rhd F$. 

($1 \Rightarrow 2$): Assume $G \rhd F \in \Gamma_1 \cup \Gamma_2$. 
Let $\langle \Delta, \sigma \rangle \in W$ be any element such that $\langle \Gamma, \tau \rangle R \langle \Delta, \sigma \rangle$ and $\langle \Delta, \sigma \rangle \Vdash G$. 
By induction hypothesis, $G \in \Delta_1 \cup \Delta_2$. 
We distinguish the following two cases:

(Case 1): Assume that $\tau \ast \langle A \rangle \subseteq \sigma$, $\Gamma \prec_A \Delta$ and $\Box {\sim} A \in \Delta_1 \cup \Delta_2$. 
By Lemma \ref{L3-m2}, there exists a pair $\Theta \in K$ such that $\Gamma \prec_A \Theta$, $F \in \Theta_1 \cup \Theta_2$ and $\Box {\sim} A, {\sim} A \in \Theta_1 \cup \Theta_2$.

Take $\rho:= \tau \ast \langle A \rangle$. 
By $\Gamma \prec \Theta$, $\rank{(\Theta)}+1 \leq \rank{(\Gamma)}$. 
We have
\begin{equation*}
\rank{(\Theta)} + |\rho| = \rank{(\Theta)} + 1 + |\tau| \leq \rank{(\Gamma)} + |\tau| \leq \rank{(\Gamma')}.
\end{equation*}
It follows that $\langle \Theta, \rho \rangle \in W$, and we have $\langle \Delta, \sigma \rangle S_{\langle \Gamma, \tau \rangle} \langle \Theta, \rho \rangle$.
By induction hypothesis, $\langle \Theta, \rho \rangle \Vdash F$. 
Therefore $\langle \Gamma, \tau \rangle \Vdash G \rhd F$.

(Case 2): Otherwise, by Lemma \ref{L3-1}, we have $\Gamma \prec_\bot \Delta$. By Lemma \ref{L3-m2}, there exists a pair $\Theta \in K$ such that $\Gamma \prec_\bot \Theta$ and $F \in \Theta_1 \cup \Theta_2$.

Take $\rho := \tau \ast \langle \bot \rangle$. 
Then we have $\langle \Theta, \rho \rangle \in W$ by a similar argument as in Case 1. 
By the definition of $S_{\langle \Gamma, \tau \rangle}$ and induction hypothesis, $\langle \Delta, \sigma \rangle S_{\langle \Gamma, \tau \rangle} \langle \Theta, \rho \rangle$ and $\langle \Theta, \rho \rangle \Vdash F$. 
Therefore $\langle \Gamma, \tau \rangle \Vdash G \rhd F$.

($2 \Rightarrow 1$): Assume $G \rhd F \not \in \Gamma_1 \cup \Gamma_2$. 
Then $\neg (G \rhd F) \in \Gamma_1 \cup \Gamma_2$ because $\Gamma$ is complete. 
By Lemma \ref{L3-m1}, there exists a pair $\Delta \in K$ such that $\Gamma \prec_F \Delta$ and $G, \Box {\sim} F \in \Delta_1 \cup \Delta_2$. 
Let $\sigma := \tau \ast \langle F \rangle$. 
We have $\langle \Delta, \sigma \rangle \in W$. 
By induction hypothesis, $\langle \Delta, \sigma \rangle \Vdash G$. 
It suffices to show that for any $\langle \Theta, \rho \rangle \in W$, if $\langle \Delta, \sigma \rangle S_{\langle \Gamma, \tau \rangle} \langle \Theta, \rho \rangle$ then $\langle \Theta, \rho \rangle \nVdash F$.
Suppose $\langle \Delta, \sigma \rangle S_{\langle \Gamma, \tau \rangle} \langle \Theta, \rho \rangle$. 
Since $\tau \ast \langle F \rangle \subseteq \sigma$, $\Gamma \prec_F \Delta$ and $\Box {\sim} F \in \Delta_1 \cup \Delta_2$, we have ${\sim} F \in \Theta_1 \cup \Theta_2$ (and hence $F \not\in \Theta_1 \cup \Theta_2$). 
By induction hypothesis, $\langle \Theta, \rho \rangle \nVdash F$.
\end{proof}

Let $\epsilon$ be the empty sequence, then $\langle \Gamma', \epsilon \rangle \in W$ because $\rank(\Gamma') + |\epsilon| \leq \rank(\Gamma')$. 
By the Truth Lemma (Lemma \ref{L3-3}), $\langle \Gamma', \epsilon \rangle \Vdash A_0 \land \neg B_0$, and therefore $A_0 \to B_0$ is not valid in $M$. 
It follows that $\IL^-(\J{2}_+, \J{5})$ does not prove $A_0 \to B_0$. 
\end{proof}

\subsection{Consequences of Theorem \ref{T3-1}}

In this subsection, we prove some consequences of Theorem \ref{T3-1} on interpolation properties. 
First, we prove that $\IL^-(\J{2}_+, \J{5})$ has a version of the $\rhd$-interpolation property (see \cite{AHD01}). 
Secondly, we notice that CIP for $\IL$ easily follows from Theorem \ref{T3-1}. 

Before them, we show the so-called generated submodel lemma. 
Let $M = \langle W, R, \{S_w\}_{w \in W}, \Vdash \rangle$ be any $\IL^-$-model such that $\J{4}_+$ is valid in the frame of $M$. 
For each $r \in W$, we define an $\IL^-$-model $M^* = \langle W^*, R^*, \{S_w^*\}_{w \in W^*}, \Vdash^* \rangle$ as follows: 
\begin{itemize}
	\item $W^* : = \uparrow (r) \cup \{r\}$; 
	\item $x R^* y : \iff x R y$; 
	\item $y S_x^* z : \iff y S_x z$; 
	\item $x \Vdash^* p : \iff x \Vdash p$. 
\end{itemize}
We call $M^*$ the \textit{submodel of $M$ generated by $r$}. 
It is easy to show that if $\J{1}$ is valid in the frame of $M$, then it is also valid in the frame of $M^*$. 
This is also the case for $\J{2}_+$ and $\J{5}$. 
Also the following lemma is easily obtained. 

\begin{lem}[The Generated Submodel Lemma]\label{Lem:GS}
Suppose that $\J{4}_+$ is valid in the frame of an $\IL^-$-model $M = \langle W, R, \{S_w\}_{w \in W}, \Vdash \rangle$. 
For any $r \in W$, let $M^* = \langle W^*, R^*, \{S_w^*\}_{w \in W^*}, \Vdash^* \rangle$ be the submodel of $M$ generated by $r$. 
Then for any $x \in W^*$ and formula $A$, $x \Vdash A$ if and only if $x \Vdash^* A$. 
\end{lem}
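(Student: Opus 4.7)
The plan is to prove the equivalence $x \Vdash A \iff x \Vdash^* A$ by induction on the construction of $A$. The atomic cases $A \equiv \bot$ and $A \equiv p$, together with the boolean case $A \equiv B \to C$, follow directly from the definition of $\Vdash^*$. The crucial structural observation used throughout the modal cases is that $W^* = \uparrow(r) \cup \{r\}$ is upward closed under $R$: if $x \in W^*$ and $xRy$, then either $x = r$, giving $y \in \uparrow(r) \subseteq W^*$, or $rRx$, in which case transitivity of $R$ yields $rRy$ and again $y \in W^*$. Consequently the set of $R$-successors of any $x \in W^*$ is the same whether computed in $M$ or $M^*$, and this immediately handles the case $A \equiv \Box B$ via the induction hypothesis.

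The main case is $A \equiv B \rhd C$, and this is where the hypothesis that $\J{4}_+$ is valid in $M$ is essential. For the $(\Leftarrow)$ direction, if $y \in W$ satisfies $xRy$, then $y \in W^*$ by the upward-closure observation, so $xR^*y$; the induction hypothesis then transports an $M^*$-witness for $B \rhd C$ upward to an $M$-witness in the obvious way. For the $(\Rightarrow)$ direction, suppose $x \Vdash B \rhd C$ in $M$ and let $y \in W^*$ with $xR^*y$ and $y \Vdash^* B$. The induction hypothesis gives $y \Vdash B$, so there is $z \in W$ with $y S_x z$ and $z \Vdash C$; a priori $z$ need not lie in $W^*$. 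Here $\J{4}_+$ intervenes: by Proposition \ref{Prop:FC}.3, validity of $\J{4}_+$ forces $S_x \subseteq {\uparrow}(x) \times {\uparrow}(x)$, so $xRz$, and the upward-closure argument places $z$ in $W^*$. Then $y S_x^* z$ and the induction hypothesis yields $z \Vdash^* C$, witnessing $x \Vdash^* B \rhd C$.

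I do not expect a genuine obstacle; the lemma is a standard generated-submodel result in disguise. The only subtle point is noticing that $\J{4}_+$ is precisely the frame condition required to keep the $S_x$-successors of elements of $W^*$ inside $W^*$, so that the induction hypothesis can be applied to the witness $z$ in the $(\Rightarrow)$ direction of the $\rhd$-case. Without $\J{4}_+$, one could have $y S_x z$ with $z \notin \uparrow(x)$ and hence possibly $z \notin W^*$, which would break the argument; conversely, with $\J{4}_+$ in hand, the rest of the proof is bookkeeping.
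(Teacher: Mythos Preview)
Your proposal is correct and follows essentially the same approach as the paper's proof: induction on the construction of $A$, with the key step being the $\rhd$-case where validity of $\J{4}_+$ ensures the $S_x$-witness $z$ lies in $W^*$. Your explicit treatment of upward closure of $W^*$ under $R$ (handling both $x = r$ and $rRx$) is in fact slightly more careful than the paper, which writes only ``Since $rRx$'' without separating out the case $x = r$.
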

\begin{proof}
This is proved by induction on the construction of $A$. 
We only prove the case $A \equiv (B \rhd C)$. 

$(\Rightarrow)$: Suppose $x \Vdash B \rhd C$. 
Let $y \in W^*$ be any element such that $x R^* y$ and $y \Vdash^* B$. 
Then $x R y$, and by induction hypothesis, $y \Vdash B$. 
Hence there exists $z \in W$ such that $y S_x z$ and $z \Vdash C$. 
Since $\J{4}_+$ is valid in the frame of $M$, $x R z$. 
Since $r R x$, we have $r R z$. 
Thus $z \in W^*$. 
It follows $y S_x^* z$. 
By induction hypothesis, $z \Vdash^* C$. 
Therefore $x \Vdash^* B \rhd C$. 

$(\Leftarrow)$: Suppose $x \Vdash^* B \rhd C$. 
Let $y \in W$ be any element with $x R y$ and $y \Vdash B$. 
Since $x \in W^*$, we have $y \in W^*$, and hence $x R^* y$. 
By induction hypothesis, $y \Vdash^* B$. 
Then for some $z \in W^*$, $y S_x^* z$ and $z \Vdash^* C$. 
We have $y S_x z$. 
By induction hypothesis, $z \Vdash C$. 
Thus we conclude $x \Vdash B \rhd C$. 
\end{proof}

\begin{prop}\label{Prop:TIP}
For any formulas $A$ and $B$, the following are equivalent: 
\begin{enumerate}
	\item $\vdash A \rhd B$. 
	\item $\vdash A \to \Diamond B$. 
\end{enumerate}
\end{prop}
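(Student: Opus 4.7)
The plan is to prove the two directions by quite different methods. For $(2) \Rightarrow (1)$ a short syntactic argument works: applying the rule $\R{2}$ to $\vdash A \to \Diamond B$ produces $\vdash \Diamond B \rhd B \to A \rhd B$, and combining this with the axiom $\J{5}$ ($\vdash \Diamond B \rhd B$) by modus ponens yields $\vdash A \rhd B$.

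For $(1) \Rightarrow (2)$ I will argue the contrapositive semantically, invoking Kripke completeness (Theorem \ref{Thm:KC}). Starting from a counter-model of $A \to \Diamond B$, that is an $\IL^-(\J{2}_+, \J{5})$-model $M$ with a world $w$ satisfying $A \land \Box \neg B$, I first pass to the submodel $M^* = \langle W^*, R^*, \{S_u^*\}_{u \in W^*}, \Vdash^* \rangle$ generated by $w$; by Lemma \ref{Lem:GS} together with the remark preceding it, $M^*$ is still an $\IL^-(\J{2}_+, \J{5})$-model and $w \Vdash^* A \land \Box \neg B$. Then I adjoin a fresh root $v \notin W^*$ below $w$: put $W' := W^* \cup \{v\}$, $R' := R^* \cup \{\langle v, x \rangle : x \in W^*\}$, $S'_u := S^*_u$ for $u \in W^*$, and $S'_v := \{\langle x, y \rangle \in W^* \times W^* : x R^* y\}$, while letting $v$ falsify every propositional variable.

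A routine verification will show that $M' = \langle W', R', \{S'_u\}_{u \in W'}, \Vdash' \rangle$ is still an $\IL^-(\J{2}_+, \J{5})$-model. Indeed $R'$ remains transitive and conversely well-founded; the set $\{x \in W' : v R' x\}$ equals $W^*$, so $S'_v$ satisfies the $\IL^-$-frame condition and also $\J{4}_+$; $S'_v$ is transitive by the transitivity of $R^*$, giving $\J{2}_+$; and $v R' x$ together with $x R' y$ forces $x R^* y$ and hence $x S'_v y$, giving $\J{5}$. A standard induction on formula complexity then shows that truth at worlds of $W^*$ is unchanged in passing from $M^*$ to $M'$, since the $R'$-successors and $S'$-relations of $u \in W^*$ are undisturbed. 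In particular $w \Vdash' A$, while any $y$ with $w S'_v y$ satisfies $w R^* y$ and hence $y \nVdash^* B$; therefore $v \nVdash' A \rhd B$, and completeness delivers $\nvdash A \rhd B$.

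The delicate point is calibrating the new relation $S'_v$: it must be large enough to be closed under compositions of $R'$-steps from $v$ and to be transitive (for $\J{5}$ and $\J{2}_+$), yet small enough to leave $v \nVdash' A \rhd B$. The minimal $\J{5}$-driven choice $S'_v = \{\langle x, y \rangle : x R^* y\}$ meets both requirements simultaneously, because $w \Vdash^* \Box \neg B$ already rules out any $B$-successor of $w$ via $R^*$.
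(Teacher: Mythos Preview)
Your proof is correct and follows essentially the same approach as the paper: the syntactic direction via $\R{2}$ and $\J{5}$ is identical, and for the semantic direction the paper likewise passes to the generated submodel and adjoins a fresh root $r_0$ with $S'_{r_0}$ defined exactly as your $S'_v = \{\langle x, y\rangle : x R^* y\}$. The verification that the resulting frame validates $\IL^-(\J{2}_+, \J{5})$ and that $r_0 \nVdash' A \rhd B$ proceeds just as you describe.
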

\begin{proof}
$(1 \Rightarrow 2)$: Suppose $\nvdash A \to \Diamond B$. 
Then by Theorem \ref{Thm:KC}, there exist an $\IL^-$-model $M = \langle W, R, \{S_w\}_{w \in W}, \Vdash \rangle$ and $r \in W$ such that $\IL^-(\J{2}_+, \J{5})$ is valid in the frame of $M$ and $r \Vdash A \land \Box \neg B$. 
By the Generated Submodel Lemma, we may assume that $r$ is the root of $M$, that is, for all $w \in W \setminus \{r\}$, $r R w$. 

We define a new $\IL^-$-model $M' = \langle W', R', \{S_w'\}_{w \in W'}, \Vdash' \rangle$ as follows: 
\begin{itemize}
	\item $W' : = W \cup \{r_0\}$, where $r_0$ is a new element; 
	\item $x R' y : \iff \begin{cases} x R y & \text{if}\ x \neq r_0, \\ y \in W & \text{if}\ x = r_0; \end{cases}$ 
	\item $y S_x' z : \iff \begin{cases} y S_x z & \text{if}\ x \neq r_0, \\ y R z & \text{if}\ x = r_0; \end{cases}$
	\item $x \Vdash' p : \iff x \neq r_0$ and $x \Vdash p$. 
\end{itemize}
Then $\IL^-(\J{2}_+, \J{5})$ is also valid in the frame of $M'$. 
Also it is easily shown that for any $x \in W$ and any formula $C$, $x \Vdash C$ if and only if $x \Vdash' C$. 

Then $r \Vdash' A \land \Box \neg B$. 
Let $x \in W'$ be any element such that $r S_{r_0}' x$. 
Then $r R x$, and hence $r R' x$. 
We have $x \nVdash' B$. 
Therefore we obtain $r_0 \nVdash' A \rhd B$. 
It follows $\nvdash A \rhd B$. 

$(2 \Rightarrow 1)$: Suppose $\vdash A \to \Diamond B$, then $\vdash \Diamond B \rhd B \to A \rhd B$ by $\R{2}$.  
Thus $\vdash A \rhd B$. 
\end{proof}

\begin{cor}[A version of the $\rhd$-interpolation property]
Let $A$ and $B$ be any formulas. 
If $\vdash A \rhd B$, then there exists a formula $C$ such that $v(C) \subseteq v(A) \cap v(B)$, $\vdash A \to C$ and $\vdash C \rhd B$. 
\end{cor}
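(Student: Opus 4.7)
The plan is to combine CIP (Theorem \ref{T3-1}) with the equivalence established in Proposition \ref{Prop:TIP}, which lets us trade the binary modality $\rhd$ for the unary $\Diamond$. The intuition is that over $\IL^-(\J{2}_+, \J{5})$ the formulas $A \rhd B$ and $A \to \Diamond B$ are interderivable, so an interpolation statement about $\rhd$ should be reducible to the ordinary Craig interpolation statement about an implication.

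Concretely, I would proceed as follows. First, assume $\vdash A \rhd B$. By the direction $(1 \Rightarrow 2)$ of Proposition \ref{Prop:TIP}, this gives $\vdash A \to \Diamond B$. Second, apply CIP for $\IL^-(\J{2}_+, \J{5})$ (Theorem \ref{T3-1}) to this implication: there is a formula $C$ such that $v(C) \subseteq v(A) \cap v(\Diamond B)$, $\vdash A \to C$ and $\vdash C \to \Diamond B$. Since $\Diamond B \equiv \neg \Box \neg B$ contains exactly the propositional variables of $B$, we have $v(\Diamond B) = v(B)$, and hence $v(C) \subseteq v(A) \cap v(B)$, which is one of the required properties. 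Third, apply the direction $(2 \Rightarrow 1)$ of Proposition \ref{Prop:TIP} to $\vdash C \to \Diamond B$ to conclude $\vdash C \rhd B$.

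Putting the three steps together yields the witness $C$ with $v(C) \subseteq v(A) \cap v(B)$, $\vdash A \to C$ and $\vdash C \rhd B$, as required. I do not anticipate a genuine obstacle here, since all the work has already been done: the hard semantical content sits in Theorem \ref{T3-1}, while Proposition \ref{Prop:TIP} is a cheap bridge. The only small thing to double-check is the variable-set bookkeeping $v(\Diamond B) = v(B)$, which is trivial from the definition of $\Diamond$ as $\neg \Box \neg$.
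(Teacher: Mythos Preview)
Your proposal is correct and follows essentially the same route as the paper: apply Proposition~\ref{Prop:TIP} to pass from $\vdash A \rhd B$ to $\vdash A \to \Diamond B$, invoke CIP (Theorem~\ref{T3-1}) to obtain the interpolant $C$, and then apply Proposition~\ref{Prop:TIP} in the other direction to get $\vdash C \rhd B$. The only difference is that you make the bookkeeping $v(\Diamond B) = v(B)$ explicit, which the paper leaves implicit.
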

\begin{proof}
Suppose $\vdash A \rhd B$. 
Then by Proposition \ref{Prop:TIP}, $\vdash A \to \Diamond B$. 
By Theorem \ref{T3-1}, there exists a formula $C$ such that $v(C) \subseteq v(A) \cap v(B)$, $\vdash A \to C$ and $\vdash C \to \Diamond B$. 
By Proposition \ref{Prop:TIP} again, we obtain $\vdash C \rhd B$. 
\end{proof}

\begin{prob}
Does the logic $\IL^-(\J{2}_+, \J{5})$ have the original version of the $\rhd$-interpolation property?
That is, for every formulas $A$ and $B$ with $\vdash A \rhd B$, does there exist a formula $C$ such that $v(C) \subseteq v(A) \cap v(B)$, $\vdash A \rhd C$ and $\vdash C \rhd B$?
\end{prob}

For each formula $A$, let $\Sub(A)$ be the set of all subformulas of $A$. 
Also let $\PSub(A) : = \Sub(A) \setminus \{A\}$. 
We prove that $\IL$ is embeddable into $\IL^-(\J{2}_+, \J{5})$ in some sense. 

\begin{prop}\label{Prop:EMB}
For any formula $A$, the following are equivalent: 
\begin{enumerate}
	\item $\IL \vdash A$. 
	\item $A$ is valid in all finite $\IL^-$-frames in which all axioms of $\IL$ are valid. 
	\item $\vdash \boxdot \bigwedge\{B \rhd B : B \in \PSub(A)\} \to A$. 
\end{enumerate}
\end{prop}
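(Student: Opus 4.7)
The plan is to establish $(1) \Leftrightarrow (2)$ as a direct instance of Theorem \ref{Thm:KC} applied to $\IL = \IL^-(\J{1}, \J{2}_+, \J{5})$ (Proposition \ref{Prop:KO}.9), and then argue the equivalence $(2) \Leftrightarrow (3)$ semantically via the soundness and completeness of $\IL^-(\J{2}_+, \J{5})$ (again Theorem \ref{Thm:KC}).

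For $(3) \Rightarrow (2)$, I would take any finite $\IL^-$-frame $\mathcal{F}$ on which all axioms of $\IL$ are valid. Since $\J{1}$ is valid in $\mathcal{F}$, Proposition \ref{Prop:FC}.1 yields $x S_w x$ whenever $w R x$, so $w \Vdash B \rhd B$ holds at every world $w$ and for every formula $B$. Consequently $r \Vdash \boxdot \bigwedge\{B \rhd B : B \in \PSub(A)\}$ at every $r$; because $\mathcal{F}$ also validates $\J{2}_+$ and $\J{5}$, the hypothesis (3) combined with soundness of $\IL^-(\J{2}_+, \J{5})$ forces $r \Vdash A$.

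For $(2) \Rightarrow (3)$ I would argue by contrapositive. If $\IL^-(\J{2}_+, \J{5}) \nvdash \boxdot \bigwedge\{B \rhd B : B \in \PSub(A)\} \to A$, then by Theorem \ref{Thm:KC} there exist a finite $\IL^-$-model $M = \langle W, R, \{S_w\}_{w \in W}, \Vdash \rangle$ in which $\J{2}_+$ and $\J{5}$ are valid, and a world $r \in W$ such that $r \Vdash \boxdot \bigwedge\{B \rhd B : B \in \PSub(A)\}$ while $r \nVdash A$. I would then define a modified model $M' = \langle W, R, \{S'_w\}_{w \in W}, \Vdash \rangle$ by
\[
S'_w := S_w \cup \{(x,x) : w R x\}.
\]
A direct check using Proposition \ref{Prop:FC} shows that $\J{1}$, $\J{2}_+$, and $\J{5}$ are all valid in the frame of $M'$; the transitivity clause for each $S'_w$ follows from the transitivity of $S_w$ together with the inclusion $S_w \subseteq \uparrow(w) \times \uparrow(w)$ coming from $\J{4}_+$ (via Propositions \ref{Prop:KO}.5 and \ref{Prop:FC}.3).

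The main obstacle, and the heart of the argument, is to verify that this modification leaves the truth value of $A$ at $r$ unchanged. The claim is that for every $B \in \Sub(A)$ and every $w \in \{r\} \cup \uparrow(r)$ we have $w \Vdash B$ iff $w \Vdash' B$, proved by induction on $B$. The only nontrivial case is $B \equiv C \rhd D$. The direction $\Rightarrow$ is immediate because $S_w \subseteq S'_w$. For $\Leftarrow$, assume $w \Vdash' C \rhd D$ and pick $x$ with $w R x$ and $x \Vdash C$; the inductive hypothesis on $C$ gives $x \Vdash' C$, so there is $y$ with $x S'_w y$ and $y \Vdash' D$. If $x S_w y$, the inductive hypothesis on $D$ concludes the argument. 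Otherwise $y = x$ arises from one of the newly added pairs, whence $x \Vdash' D$, so $x \Vdash D$ by induction. Since $C \rhd D \in \Sub(A)$ forces $D \in \PSub(A)$, the hypothesis at $w$ obtained from the $\boxdot$ yields $w \Vdash D \rhd D$, and feeding $x$ into this produces a genuine $S_w$-witness satisfying $D$. This is precisely the step where the hypothesis $D \rhd D$ for $D \in \PSub(A)$ is indispensable. Taking $B = A$ and $w = r$ then gives $r \nVdash' A$; since the frame of $M'$ is an $\IL$-frame, $A$ fails on some finite $\IL$-frame, contradicting $(2)$.
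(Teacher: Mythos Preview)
Your proof is correct and follows essentially the same route as the paper: the heart of the argument---modifying the model via $S'_w = S_w \cup \{(x,x): wRx\}$ and proving truth preservation for subformulas of $A$ using the hypothesis $D \rhd D$ for $D \in \PSub(A)$---is identical. The only cosmetic differences are that the paper organizes the equivalences as the cycle $(1) \Rightarrow (2) \Rightarrow (3) \Rightarrow (1)$ (with $(3) \Rightarrow (1)$ done syntactically via Proposition~\ref{Prop:KO}.7) and invokes the Generated Submodel Lemma to make $r$ the root, whereas you restrict the truth-preservation claim to $\{r\} \cup {\uparrow}(r)$.
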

\begin{proof}
$(1 \Rightarrow 2)$ is obvious. 

$(3 \Rightarrow 1)$ follows from Proposition \ref{Prop:KO}.7. 

$(2 \Rightarrow 3)$: Suppose $L \nvdash \boxdot \bigwedge\{B \rhd B : B \in \PSub(A)\} \to A$. 
Then by Theorem \ref{Thm:KC}, there exist a finite $\IL^-$-model $M = \langle W, R, \{S_w\}_{w \in W}, \Vdash \rangle$ and $r \in W$ such that $\IL^-(\J{2}_+, \J{5})$ is valid in the frame of $M$ and $r \Vdash \boxdot \bigwedge\{B \rhd B : B \in \PSub(A)\} \land \neg A$. 
By the Generated Submodel Lemma, we may assume that $r$ is the root of $M$. 

We define an $\IL^-$-model $M' = \langle W', R', \{S_w'\}_{w \in W'}, \Vdash' \rangle$ as follows: 
\begin{itemize}
	\item $W' : = W$; 
	\item $x R' y : \iff x R y$; 
	\item $y S_x' z : \iff y S_x z$ or ($x R y$ and $z = y$); 
	\item $x \Vdash' p : \iff x \Vdash p$. 
\end{itemize}

\begin{cl}
$\IL$ is valid in the frame of $M'$. 
\end{cl}
\begin{proof}
By Proposition \ref{Prop:KO}.8, it suffices to show that $\J{1}$, $\J{2}_+$ and $\J{5}$ are valid in the frame of $M'$. 

$\J{1}$: Suppose $x R y$. 
Then $y S_x' y$ by the definition of $S_x'$. 
Thus $\J{1}$ is valid. 
	
$\J{4}_+$: Suppose $y S'_x z$. 
Then $y S_x z$ or ($x R y$ and $y = z$). 
If $y S_x z$, then $x R z$ because $\J{4}_+$ is valid in the frame of $M$. 	
If $x R y$ and $y = z$, then $x R z$. 
Hence in either case, we have $x R z$. 
Therefore $\J{4}_+$ is valid. 
	
$\J{2}_+$: Suppose $y S'_x z$ and $z S'_x u$. 
We distinguish the following four cases. 
	\begin{itemize}
		\item (Case 1): $y S_x z$ and $z S_x u$. 
Since $\J{2}_+$ is valid in the frame of $M$, $y S_x u$. 
		\item (Case 2): $y S_x z$, $x R z$ and $z = u$. Then $y S_x u$. 
		\item (Case 3): $x R y$, $y = z$ and $z S_x u$. Then $y S_x u$. 
		\item (Case 4): $x R y$, $y = z$, $x R z$ and $z = u$. Then $x R y$ and $y = u$. 
	\end{itemize}
	In either case, we have $y S_x' u$. 
	Since $\J{4}_+$ is valid, we obtain that $\J{2}_+$ is valid in the frame of $M'$. 

$\J{5}$: Suppose $x R' y$ and $y R' z$. 
	Then $x R y$ and $y R z$. 
	Since $\J{5}$ is valid in the frame of $M$, $y S_x z$. 
	Then $y S_x' z$. 
	Therefore $\J{5}$ is valid. 
\end{proof}

\begin{cl}
For any $B \in \Sub(A)$ and $x \in W$, $x \Vdash B$ if and only if $x \Vdash' B$. 
\end{cl}
\begin{proof}
We prove by induction on the construction of $B$. 
We only give a proof of the case that $B$ is $C \rhd D$. 

$(\Rightarrow)$: 
Suppose $x \Vdash C \rhd D$. 
Let $y \in W$ be such that $x R y$ and $y \Vdash' C$. 
By induction hypothesis, $y \Vdash C$. 
Then there exists $z \in W$ such that $y S_x z$ and $z \Vdash D$. 
Then $y S_x' z$ and by induction hypothesis, $z \Vdash' D$. 
Therefore $x \Vdash' C \rhd D$. 	

$(\Leftarrow)$: 
Suppose $x \Vdash' C \rhd D$. 
Let $y \in W$ be such that $x R y$ and $y \Vdash C$. 
By induction hypothesis, $y \Vdash' C$. 
Hence there exists $z \in W$ such that $y S'_x z$ and $z \Vdash' D$. 
By induction hypothesis, $z \Vdash D$. 
By the definition of $S_x'$, we have either $y S_x z$ or ($x R y$ and $y = z$). 
If $y S_x z$, then $x \Vdash C \rhd D$. 
If $x R y$ and $y = z$, then $x R y$ and $y \Vdash D$. 
Here either $x = r$ or $r R w$. 
Since $D \in \PSub(A)$, we obtain $x \Vdash D \rhd D$ because $r \Vdash \boxdot \bigwedge\{B \rhd B : B \in \Sub(A)\}$. 
Thus for some $z' \in W$, $y S_x z'$ and $z' \Vdash D$. 
We conclude $x \Vdash C\ \rhd D$. 
\end{proof}

Since $r \nVdash A$, we obtain $r \nVdash' A$ by the claim. 
Thus $A$ is not valid in some finite $\IL^-$-frame in which all axioms of $\IL$ are valid. \end{proof}

\begin{proof}[Proof of Theorem \ref{Thm:AHD}]
Suppose $\IL \vdash A \to B$. 
Then by Proposition \ref{Prop:EMB}, 
\[
	\vdash \boxdot \bigwedge\{C \rhd C : C \in \PSub(A \to B)\} \to (A \to B).
\]
Since $\PSub(A \to B) = \Sub(A) \cup \Sub(B)$, we have
\[
	\vdash \boxdot \bigwedge\{C \rhd C : C \in \Sub(A)\} \land A \to \left(\boxdot \bigwedge\{C \rhd C : C \in \Sub(B)\} \to B \right).
\]
By Theorem \ref{T3-1}, there exists a formula $D$ such that $v(D) \subseteq v(A) \cap v(B)$, 
\[
	\vdash \boxdot \bigwedge\{C \rhd C : C \in \Sub(A)\} \land A \to D
\]
and
\[
	\vdash D \to \left(\boxdot \bigwedge\{C \rhd C : C \in \Sub(B)\} \to B \right).
\] 
Then by Proposition \ref{Prop:KO}.7, we obtain $\IL \vdash A \to D$ and $\IL \vdash D \to B$. 
\end{proof}

\section{The fixed point property}\label{Sec:FPP}

In this section, we investigate FPP and $\ell$FPP. 
First, we study FPP for the logic $\IL^-(\J{2}_+, \J{5})$. 
Then, we prove that $\IL^-(\J{4}, \J{5})$ has $\ell$FPP.

\subsection{FPP for $\IL^{-}(\J{2}_{+}, \J{5})$}

From Theorem \ref{T3-1} and Lemma \ref{FPP-CU}, we immediately obtain the following corollary. 

\begin{cor}[FPP for $\IL^-(\J{2}_+, \J{5})$]\label{Cor:FPP}
$\IL^-(\J{2}_+, \J{5})$ has FPP. 
\end{cor}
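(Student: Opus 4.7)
The plan is to apply Lemma \ref{FPP-CU}, which states that any extension of $\IL^-(\J{4}_+)$ closed under substitution having CIP also has FPP. So all I need to do is verify the three hypotheses for the logic $\IL^-(\J{2}_+,\J{5})$.

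First, I would observe that $\IL^-(\J{2}_+,\J{5})$ is an extension of $\IL^-(\J{4}_+)$: this is immediate from Proposition \ref{Prop:KO}.5, which asserts $\IL^-(\J{2}_+) \vdash \J{4}_+$. Second, $\IL^-(\J{2}_+,\J{5})$ is closed under substitution of formulas for propositional variables: this is a routine syntactic fact, since all axiom schemata are schematic and the inference rules (Modus Ponens, Necessitation, $\R{1}$, $\R{2}$) preserve substitution instances. Third, CIP for $\IL^-(\J{2}_+,\J{5})$ is exactly Theorem \ref{T3-1}, just proved.

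Having verified these hypotheses, the conclusion FPP for $\IL^-(\J{2}_+,\J{5})$ follows directly from Lemma \ref{FPP-CU}. There is essentially no obstacle here, as the hard work was done in establishing Theorem \ref{T3-1} (CIP via the semantical tableau construction) and Lemma \ref{FPP-CU} (Smory\'nski's argument adapted to the present setting using UFP from Theorem \ref{UFP} and the substitution principle from Proposition \ref{p2}). The only thing worth noting explicitly is that the argument genuinely requires $\J{4}_+$, which is why Smory\'nski's implication ``CIP + UFP $\Rightarrow$ FPP'' applies here, whereas for weaker sublogics lacking $\J{4}_+$ one must fall back on the left-modalized variant $\ell$FPP via Lemma \ref{lFPP-ClU}.
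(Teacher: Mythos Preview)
Your proposal is correct and follows essentially the same approach as the paper: the paper simply states that the corollary follows immediately from Theorem~\ref{T3-1} (CIP) and Lemma~\ref{FPP-CU}, and your verification of the hypotheses of Lemma~\ref{FPP-CU} (in particular, invoking Proposition~\ref{Prop:KO}.5 to get $\J{4}_+$) is exactly what is implicit in that one-line deduction.
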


Moreover, we give a syntactical proof of FPP for $\IL^{-}(\J{2}_{+}, \J{5})$ by modifying de Jongh and Visser's proof of FPP for $\IL$. 
Since the Substitution Principle (Proposition \ref{p2}) holds for extensions of $\IL^-(\J{4}_+)$, as usual, it suffices to prove that every formula of the form $A(p) \rhd B(p)$ has a fixed point in $\IL^{-}(\J{2}_{+}, \J{5})$. 
As a consequence, we show that every formula $A(p)$ which is modalized in $p$ has the same fixed point in $\IL^-(\J{2}_+, \J{5})$ as given by de Jongh and Visser. 
That is, 

\begin{thm}\label{FPrhd}
For any formulas $A(p)$ and $B(p)$, $A(\top) \rhd B(\Box \neg A(\top))$ is a fixed point of $A(p) \rhd B(p)$ in $\IL^-(\J{2}_+, \J{5})$. 
\end{thm}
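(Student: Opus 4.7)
The plan is to adapt de Jongh and Visser's syntactic derivation of the explicit fixed point in $\IL$ to the present weaker logic. Write $N := \Box \neg A(\top)$, so that the proposed fixed point is $F := A(\top) \rhd B(N)$, and the target is
\[
	\IL^-(\J{2}_+, \J{5}) \vdash F \leftrightarrow (A(F) \rhd B(F)).
\]
Throughout, $\vdash$ abbreviates $\IL^-(\J{2}_+, \J{5}) \vdash$.

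\textbf{Preliminary identities.} From Proposition \ref{Prop:KO}.1 one gets $\vdash N \to F$, and from $\G{3}$ one gets $\vdash N \to \Box N$; combining, $\vdash N \to \boxdot F$ and $\vdash N \to \boxdot N$, whence $\vdash N \to \boxdot(F \leftrightarrow \top) \wedge \boxdot(F \leftrightarrow N)$. Invoking the Substitution Principle (Proposition \ref{p2}.1, available because $\IL^-(\J{4}_+) \subseteq \IL^-(\J{2}_+, \J{5})$ by Proposition \ref{Prop:KO}.5) yields the key identity
\[
	(\star)\quad \vdash N \to \bigl( (A(F) \leftrightarrow A(\top)) \wedge (B(F) \leftrightarrow B(N)) \bigr).
\]
A quick consequence is $\vdash N \to \Box \neg A(F)$: at $R$-successors $y$ of an $N$-world one has both $\neg A(\top)$ (from $\Box \neg A(\top)$) and $N$, so $(\star)$ at $y$ converts $\neg A(\top)$ into $\neg A(F)$.

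\textbf{Direction $F \to (A(F) \rhd B(F))$.} Using Lemma \ref{l2}.1,
\[
	\vdash F \leftrightarrow (A(\top) \wedge N) \rhd B(N),
\]
and combining with $(\star)$ via $\R{2}$ gives $\vdash F \to (A(F) \wedge N) \rhd B(N)$. From here I would use Lemma \ref{l2}.2 to enrich the consequent to $B(N) \wedge \Box \neg B(N)$, and then combine with $(\star)$, $\J{2}_+$, and $\J{5}$ to transfer through to $(A(F) \wedge N) \rhd B(F)$; a final use of Lemma \ref{l2}.1 drops the extra conjunct in the antecedent to yield $A(F) \rhd B(F)$.

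\textbf{Direction $(A(F) \rhd B(F)) \to F$.} Symmetric: apply Lemma \ref{l2}.1 to the antecedent in the form $(A(F) \wedge \Box \neg A(F)) \rhd B(F)$, then use $\vdash N \to \Box \neg A(F)$ together with $(\star)$ (and $\J{2}_+$, $\J{5}$) via $\R{1}$, $\R{2}$ to bring this to $(A(\top) \wedge N) \rhd B(N)$, which equals $F$ by Lemma \ref{l2}.1 again.

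\textbf{Main obstacle.} The crux in both directions is justifying the substitution $B(N) \leftrightarrow B(F)$ inside the consequent of $\rhd$. An $S_w$-witness $y$ for a source world $x$ is merely $S_w$-related to $x$; although $\J{4}_+$ gives $wRy$, we have no guarantee that $xRy$, so the propagation $x \Vdash \Box N \Rightarrow y \Vdash N$ fails, and $(\star)$ cannot be invoked at $y$ directly. Overcoming this is precisely where $\J{5}$ (which inserts $R$-successors as $S_w$-witnesses, allowing $N$ to propagate from the source world) combined with $\J{2}_+$ and Lemma \ref{l2}.2 is essential; the syntactic implementation of this juggling is the only place where genuine work beyond the classical $\IL$-argument is required.
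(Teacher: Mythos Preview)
Your outline parallels the paper's --- two substitutions, one in each argument of $\rhd$, each mediated by Lemma~\ref{l2} --- but there is a genuine gap, and it is precisely the one you flag in your last paragraph.

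First a minor point: your ``final use of Lemma~\ref{l2}.1'' is applied to $(A(F) \land N) \rhd B(F)$, but Lemma~\ref{l2}.1 only lets you strip $\Box\neg A(F)$, not $N = \Box\neg A(\top)$. The repair is easy: from the first half of $(\star)$ together with Lemma~\ref{ll1} one gets the \emph{unconditional} equivalence $(A(\top) \land N) \leftrightarrow (A(F) \land \Box\neg A(F))$ (this is the paper's Lemma~\ref{l3}), after which $\R{2}$ and Lemma~\ref{l2}.1 do what you intend.

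The substantive gap is the substitution in the consequent. Your $(\star)$ gives $B(N)\leftrightarrow B(F)$ only under the hypothesis $N$, and you correctly observe that $N$ cannot be transported to an $S_w$-witness. The fix, however, is not further manipulation with $\J{5}$ and $\J{2}_+$; it is a \emph{different hypothesis} for the equivalence. The paper's key observation (Lemma~\ref{l5}) is that $\Box\neg B(N) \to (N\leftrightarrow F)$ holds already in $\IL^-(\J{4}_+)$: from $\Box\neg B(N)$ one has $\Box(\bot\leftrightarrow B(N))$, so by $\J{4}_+$ one gets $A(\top)\rhd\bot \leftrightarrow A(\top)\rhd B(N)$, i.e.\ $N\leftrightarrow F$ via $\J{6}$. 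The Substitution Principle then gives $\Box\neg B(N)\to(B(N)\leftrightarrow B(F))$, and Lemma~\ref{ll1} upgrades this to the unconditional $(B(N)\land\Box\neg B(N)) \leftrightarrow (B(F)\land\Box\neg B(F))$, whence $\R{1}$ plus a single appeal to Lemma~\ref{l2}.2 yield $\vdash D\rhd B(N)\leftrightarrow D\rhd B(F)$ for every $D$ (Lemma~\ref{l6}). No direct use of $\J{5}$ or $\J{2}_+$ beyond what is already packaged in Lemma~\ref{l2} is required. With this in hand the proof assembles cleanly as two biconditionals, $F\leftrightarrow A(F)\rhd B(N)$ and $A(F)\rhd B(N)\leftrightarrow A(F)\rhd B(F)$, rather than your two one-directional sketches.
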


\begin{lem}\label{ll1}
Let $L$ be any extension of $\IL^{-}$. 
For any formulas $A$ and $B$, if $L \vdash \Box \lnot A \to (A \leftrightarrow B)$, then $L \vdash (A \land \Box \lnot A) \leftrightarrow (B \land \Box \lnot B)$. 
\end{lem}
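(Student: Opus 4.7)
The plan is to establish the two implications of the biconditional separately, using only $\IL^-$-reasoning together with Fact \ref{l1}.

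For the forward direction $(A \land \Box \lnot A) \to (B \land \Box \lnot B)$, the $B$-conjunct is immediate: under $\Box \lnot A$ the hypothesis gives $A \leftrightarrow B$, which combined with $A$ yields $B$. For the $\Box \lnot B$-conjunct, I would necessitate the hypothesis and apply $\G{2}$ to obtain $\Box \Box \lnot A \to \Box(A \leftrightarrow B)$, and then invoke the standard derivability in $\mathbf{GL}$ of $\Box \lnot A \to \Box \Box \lnot A$ (which follows from $\G{1}, \G{2}, \G{3}$ and is therefore available in $\IL^-$) to conclude $\Box \lnot A \to \Box(A \leftrightarrow B)$. A further application of $\G{2}$, using the tautology $\lnot A \land (A \leftrightarrow B) \to \lnot B$, converts $\Box \lnot A \land \Box(A \leftrightarrow B)$ into $\Box \lnot B$.

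For the backward direction $(B \land \Box \lnot B) \to (A \land \Box \lnot A)$, the hypothesis directly yields $B \land \Box \lnot A \to A$, equivalently $B \to (A \lor \Diamond A)$. Substituting the right-hand side by Fact \ref{l1} gives
\[
\IL^- \vdash B \to (A \land \Box \lnot A) \lor \Diamond(A \land \Box \lnot A).
\]
On the other hand, from the forward direction we have $A \land \Box \lnot A \to B$; necessitating and passing to the $\Diamond/\Box$-dual form yields $\Box \lnot B \to \lnot \Diamond(A \land \Box \lnot A)$. Under $B \land \Box \lnot B$, the right disjunct above is then excluded, leaving $A \land \Box \lnot A$.

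The main obstacle is the backward direction: the hypothesis by itself yields only the weak consequence $B \to A \lor \Diamond A$, and it is Fact \ref{l1} that provides the crucial bridge, refining the right-hand side into a disjunction involving $A \land \Box \lnot A$, a form strong enough to be killed in its $\Diamond$-version by $\Box \lnot B$ through the forward direction.
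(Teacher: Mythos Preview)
Your proof is correct. The forward direction is essentially the paper's argument, just spelled out in more detail: the paper simply asserts $L \vdash \Box\lnot A \to (\Box\lnot A \leftrightarrow \Box\lnot B)$ (which amounts to your use of $\Box\lnot A \to \Box\Box\lnot A$ together with $\G{2}$), and then combines this with the hypothesis.

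For the backward direction your route genuinely differs from the paper's. You pass through Fact~\ref{l1}: from $B \to A \lor \Diamond A$ you upgrade to $B \to (A \land \Box\lnot A) \lor \Diamond(A \land \Box\lnot A)$, and then use the already-established forward direction to kill the $\Diamond$-disjunct under $\Box\lnot B$. The paper instead argues directly with L\"ob: from the hypothesis one reads off $\lnot B \to (\Box\lnot A \to \lnot A)$, necessitates, and applies $\G{3}$ to obtain $\Box\lnot B \to \Box\lnot A$; the hypothesis then immediately gives $B \land \Box\lnot B \to A \land \Box\lnot A$. The paper's argument is shorter and self-contained (no appeal to Fact~\ref{l1} or to the forward direction), whereas your argument illustrates nicely how Fact~\ref{l1} can substitute for an explicit L\"ob step---which is in the spirit of how Fact~\ref{l1} is used elsewhere in the paper.
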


\begin{proof}
Suppose $L \vdash \Box \lnot A \to (A \leftrightarrow B)$. 
Then, $L \vdash \Box \lnot A \to (\Box \lnot A \leftrightarrow \Box \lnot B)$ and hence $L \vdash \Box \neg A \to \Box \neg B$. 
By combining this with our supposition, we obtain
\[
L \vdash (A \land \Box \lnot A) \to (B \land \Box \lnot B). 
\]
On the other hand, $L \vdash \lnot B \to (\Box \lnot A \to \lnot A)$. 
Hence, by the axiom scheme $\G{3}$, $L \vdash \Box \lnot B \to \Box \lnot A$. 
Therefore, by our supposition, 
\[
L \vdash (B \land \Box \lnot B) \to (A \land \Box \lnot A). 
\]
\end{proof}

\begin{lem}\label{l3}
For any formulas $A$ and $C$, 
\[
\IL^-(\J{4}_{+}) \vdash (A(\top) \land \Box \lnot A(\top)) \leftrightarrow (A(A(\top) \rhd C) \land \Box \lnot A(A(\top) \rhd C)). 
\]
\end{lem}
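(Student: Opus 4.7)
The plan is to apply Lemma~\ref{ll1} with $A(\top)$ playing the role of $A$ and $A(A(\top)\rhd C)$ playing the role of $B$. Writing $X \equiv A(\top)\rhd C$ for brevity, this reduces the desired biconditional to the single implication
\[
 \IL^-(\J{4}_+) \vdash \Box \neg A(\top) \to \bigl(A(\top) \leftrightarrow A(X)\bigr).
\]

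To prove this implication, I would invoke the Substitution Principle (Proposition~\ref{p2}.1) for the context $A(p)$, obtaining
\[
 \IL^-(\J{4}_+) \vdash \boxdot(\top \leftrightarrow X) \to \bigl(A(\top) \leftrightarrow A(X)\bigr).
\]
Since $\top \leftrightarrow X$ is $\IL^-$-equivalent to $X$, the antecedent simplifies to $X \land \Box X$, and the task reduces to deriving $\IL^- \vdash \Box \neg A(\top) \to (X \land \Box X)$. The first conjunct $X \equiv A(\top)\rhd C$ is immediate from Proposition~\ref{Prop:KO}.1. For the second, I would necessitate Proposition~\ref{Prop:KO}.1 and apply the axiom $\G{2}$ to obtain $\IL^- \vdash \Box \Box \neg A(\top) \to \Box X$, and then precompose with the classical theorem $\GL \vdash \Box B \to \Box \Box B$ (a well-known consequence of the L\"ob axiom $\G{3}$) instantiated at $B \equiv \neg A(\top)$.

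The main (minor) obstacle is recalling the derivation of the transitivity principle $\Box B \to \Box \Box B$ inside $\GL$; everything else is routine bookkeeping via substitution, necessitation and $\G{2}$-distribution. With $\Box \neg A(\top) \to (X \land \Box X)$ established, tracing back through the Substitution Principle and Lemma~\ref{ll1} delivers the claimed equivalence.
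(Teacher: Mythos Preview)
Your proposal is correct and follows essentially the same route as the paper: reduce via Lemma~\ref{ll1} to the implication $\Box\neg A(\top)\to(A(\top)\leftrightarrow A(X))$, then obtain this from the Substitution Principle (Proposition~\ref{p2}.1) after establishing $\Box\neg A(\top)\to\boxdot(\top\leftrightarrow X)$ using Proposition~\ref{Prop:KO}.1. The only difference is cosmetic: the paper passes silently from $\Box\neg A(\top)\to(\top\leftrightarrow X)$ to $\Box\neg A(\top)\to\boxdot(\top\leftrightarrow X)$, whereas you spell out the needed $\GL$-transitivity $\Box B\to\Box\Box B$ explicitly.
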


\begin{proof}
By Proposition \ref{Prop:KO}.1, $\IL^- \vdash \Box \lnot A(\top) \to A(\top) \rhd C$. 
Therefore, we obtain $\IL^- \vdash \Box \lnot A(\top) \to (\top \leftrightarrow (A(\top) \rhd C) )$. 
Then, $\IL^- \vdash \Box \lnot A(\top) \to \boxdot(\top \leftrightarrow (A(\top) \rhd C))$. 
Therefore, by Proposition \ref{p2}.1, we obtain 
\[
\IL^-(\J{4}_{+}) \vdash \Box \lnot A(\top) \to (A(\top) \leftrightarrow A(A(\top) \rhd C)).
\]
The lemma directly follows from this and Lemma \ref{ll1}. 
\end{proof}

\begin{lem}\label{l4}
For any formulas $A$, $C$ and $D$, 
\[
\IL^-(\J{2}, \J{4}_{+}, \J{5}) \vdash (A(\top) \rhd D) \leftrightarrow (A(A(\top) \rhd C) \rhd D). 
\]
\end{lem}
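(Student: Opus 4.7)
The plan is to reduce everything to Lemma \ref{l3} by sandwiching both sides with ``$\land \Box \neg(\cdot)$'' versions via Lemma \ref{l2}.1, and then transferring the equivalence of the strengthened antecedents through the $\rhd$ using necessitation and Proposition \ref{Prop:KO}.2. Concretely, write $P \equiv A(\top)$ and $Q \equiv A(A(\top) \rhd C)$ to lighten notation; the goal is $\IL^-(\J{2}, \J{4}_+, \J{5}) \vdash (P \rhd D) \leftrightarrow (Q \rhd D)$.

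First, I would apply Lemma \ref{l2}.1 twice, once with $A := P$ and once with $A := Q$ (both times with $C := D$), to obtain in $\IL^-(\J{2}, \J{5})$ the two equivalences
\[
(P \rhd D) \leftrightarrow ((P \land \Box \neg P) \rhd D) \quad \text{and} \quad (Q \rhd D) \leftrightarrow ((Q \land \Box \neg Q) \rhd D).
\]
It therefore suffices to prove in $\IL^-(\J{2}, \J{4}_+, \J{5})$ that $((P \land \Box \neg P) \rhd D) \leftrightarrow ((Q \land \Box \neg Q) \rhd D)$.

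For this I would invoke Lemma \ref{l3}, which gives $\IL^-(\J{4}_+) \vdash (P \land \Box \neg P) \leftrightarrow (Q \land \Box \neg Q)$. Applying Necessitation yields $\Box((P \land \Box \neg P) \leftrightarrow (Q \land \Box \neg Q))$, whence both $\Box((P \land \Box \neg P) \to (Q \land \Box \neg Q))$ and $\Box((Q \land \Box \neg Q) \to (P \land \Box \neg P))$. Now Proposition \ref{Prop:KO}.2 converts each of these boxed implications into an implication of $\rhd$-statements with $D$ on the right, delivering the desired biconditional between $(P \land \Box \neg P) \rhd D$ and $(Q \land \Box \neg Q) \rhd D$. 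Chaining these three biconditionals finishes the proof.

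I do not expect any genuine obstacle here; the argument is a clean assembly of Lemma \ref{l2}.1 (for $\J{2}$ and $\J{5}$), Lemma \ref{l3} (for $\J{4}_+$), and the basic monotonicity principle Proposition \ref{Prop:KO}.2 (which is already in $\IL^-$). The only mildly delicate point is bookkeeping: one must remember that Lemma \ref{l2}.1 only needs $\J{2}$ (not $\J{2}_+$), so the hypothesis $\IL^-(\J{2}, \J{4}_+, \J{5})$ is exactly enough, with no detour through $\J{2}_+$ required.
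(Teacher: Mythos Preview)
Your proposal is correct and follows essentially the same route as the paper: use Lemma~\ref{l3} to identify $(P \land \Box\neg P)$ with $(Q \land \Box\neg Q)$, push this equivalence through the $\rhd D$, and then appeal to Lemma~\ref{l2}.1 to remove the $\Box\neg$-conjuncts. The only cosmetic difference is that the paper pushes the equivalence through $\rhd D$ via the primitive rule $\R{2}$ directly, whereas you go via Necessitation followed by Proposition~\ref{Prop:KO}.2; these amount to the same thing.
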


\begin{proof}
By Lemma \ref{l3} and $\R{2}$, we obtain
\[
\IL^-(\J{4}_{+}) \vdash ((A(\top) \land \Box \lnot A(\top)) \rhd D) \leftrightarrow ((A(A(\top) \rhd C) \land \Box \lnot A(A(\top) \rhd C)) \rhd D). 
\]
Therefore, by Lemma \ref{l2}.1, we obtain 
\[
\IL^-(\J{2}, \J{4}_{+}, \J{5}) \vdash (A(\top)  \rhd D) \leftrightarrow (A(A(\top) \rhd C) \rhd D). 
\]
\end{proof}

\begin{lem}\label{l5}
For any formulas $B$ and $C$, $\IL^-(\J{4}_{+})$ proves
\[
(B(\Box \lnot C) \land \Box \lnot B(\Box \lnot C)) \leftrightarrow (B(C \rhd B(\Box \lnot C)) \land \Box \lnot B(C \rhd B(\Box \lnot C))).
\]
\end{lem}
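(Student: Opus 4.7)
The plan is to mimic the structure of the proof of Lemma \ref{l3}: find a formula equivalence between the two arguments to $B(\cdot)$, hold under the assumption $\Box \neg B(\Box \neg C)$, push it through the Substitution Principle (Proposition \ref{p2}.1), and then conclude by Lemma \ref{ll1}. The new wrinkle is that whereas Lemma \ref{l3} only needed the one-sided fact that $\Box \neg A(\top) \to (A(\top) \rhd C)$ (which immediately yields the trivial equivalence with $\top$), here we need the genuine biconditional
\[
\IL^-(\J{4}_+) \vdash \Box \neg B(\Box \neg C) \to \bigl(\Box \neg C \leftrightarrow (C \rhd B(\Box \neg C))\bigr).
\]

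First I would establish this biconditional. The $(\rightarrow)$ direction is immediate from Proposition \ref{Prop:KO}.1, which gives $\IL^- \vdash \Box \neg C \to (C \rhd B(\Box \neg C))$ without any assumption. The $(\leftarrow)$ direction is the key place where $\J{4}$ is used: since $\IL^-(\J{4}_+) \vdash \J{4}$ by Proposition \ref{Prop:KO}.4, we have $(C \rhd B(\Box \neg C)) \to (\Diamond C \to \Diamond B(\Box \neg C))$, whose contrapositive under the hypothesis $\Box \neg B(\Box \neg C)$ yields $\Box \neg C$.

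Next I would upgrade this implication to its $\boxdot$-form, exactly as in the proof of Lemma \ref{l3}. Applying necessitation and $\G{2}$ to the already-proven implication, together with the transitivity principle $\Box X \to \Box \Box X$ (a consequence of $\G{3}$), one obtains
\[
\IL^-(\J{4}_+) \vdash \Box \neg B(\Box \neg C) \to \boxdot\bigl(\Box \neg C \leftrightarrow (C \rhd B(\Box \neg C))\bigr).
\]
Feeding this into Proposition \ref{p2}.1 with the outer formula $B(p)$ and the substitutions $\Box \neg C$ and $C \rhd B(\Box \neg C)$ gives
\[
\IL^-(\J{4}_+) \vdash \Box \neg B(\Box \neg C) \to \bigl(B(\Box \neg C) \leftrightarrow B(C \rhd B(\Box \neg C))\bigr).
\]

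Finally, Lemma \ref{ll1} applied with $A := B(\Box \neg C)$ and $B := B(C \rhd B(\Box \neg C))$ converts this into the stated biconditional between $B(\Box \neg C) \land \Box \neg B(\Box \neg C)$ and $B(C \rhd B(\Box \neg C)) \land \Box \neg B(C \rhd B(\Box \neg C))$. The main obstacle, if any, is identifying the right hypothesis under which $\Box \neg C$ and $C \rhd B(\Box \neg C)$ become equivalent; once one realises that $\J{4}$ provides the missing $(\leftarrow)$ direction from $\Box \neg B(\Box \neg C)$, the rest of the argument is a routine transcription of the template set by Lemma \ref{l3}.
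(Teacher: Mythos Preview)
Your proposal is correct and follows essentially the same route as the paper: establish $\Box\neg B(\Box\neg C)\to(\Box\neg C\leftrightarrow C\rhd B(\Box\neg C))$, upgrade to its $\boxdot$-form, apply Proposition~\ref{p2}.1, and conclude by Lemma~\ref{ll1}. The only cosmetic difference is that the paper obtains the key biconditional via $\J{4}_+$ and $\J{6}$ (rewriting $C\rhd B(\Box\neg C)$ as $C\rhd\bot$ under the hypothesis, then as $\Box\neg C$), whereas you obtain it from Proposition~\ref{Prop:KO}.1 together with $\J{4}$; both arguments live in $\IL^-(\J{4}_+)$ and are interchangeable.
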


\begin{proof}
Since $\IL^- \vdash \Box \lnot B(\Box \lnot C) \to \Box(\bot \leftrightarrow B(\Box \lnot C))$, 
\[
	\IL^-(\J{4}_{+}) \vdash \Box \lnot B(\Box \lnot C) \to (C \rhd \bot \leftrightarrow C \rhd B(\Box \lnot C)).
\] 
Then, by $\J{6}$, $\IL^-(\J{4}_{+}) \vdash \Box \lnot B(\Box \lnot C) \to (\Box \lnot C \leftrightarrow C \rhd B(\Box \lnot C))$ and hence $\IL^-(\J{4}_{+}) \vdash \Box \lnot B(\Box \lnot C) \to \boxdot (\Box \lnot C \leftrightarrow C \rhd B(\Box \lnot C))$. 
Therefore, by Proposition \ref{p2}.1, we obtain
\[
\IL^-(\J{4}_{+}) \vdash \Box \lnot B(\Box \lnot C) \to (B(\Box \lnot C) \leftrightarrow B(C \rhd B(\Box \lnot C))).
\]
The lemma is a consequence of this with Lemma \ref{ll1}. 
\end{proof}

\begin{lem}\label{l6}
For any formulas $B$, $C$ and $D$, 
\[
\IL^-(\J{2}_{+}, \J{5}) \vdash (D \rhd B(\Box \lnot C)) \leftrightarrow (D \rhd B(C \rhd B(\Box \lnot C))).
\]
\end{lem}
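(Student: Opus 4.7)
The plan is to mirror, on the right-hand side of $\rhd$, the argument that was used for Lemma \ref{l4}. There, Lemma \ref{l3} produced an equivalence of formulas of the form $A \land \Box\lnot A$ on both sides of the equivalence, and then one applied $\R{2}$ followed by Lemma \ref{l2}.1 to collapse each $A \land \Box\lnot A$ back to $A$ inside the antecedent of $\rhd$. Here I have the analogous ingredient: Lemma \ref{l5} gives precisely an equivalence
\[
(B(\Box\lnot C) \land \Box\lnot B(\Box\lnot C)) \leftrightarrow (B(C \rhd B(\Box\lnot C)) \land \Box\lnot B(C \rhd B(\Box\lnot C)))
\]
provable in $\IL^-(\J{4}_+)$. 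Since the new formula sits on the \emph{right} of $\rhd$ in the target, I will apply $\R{1}$ instead of $\R{2}$, and then collapse with Lemma \ref{l2}.2 (which is the $(\J{2}_+, \J{5})$-result for the right-hand side) instead of Lemma \ref{l2}.1.

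Concretely, first apply $\R{1}$ to the equivalence supplied by Lemma \ref{l5}, prefixing both sides by $D \rhd (\cdot)$, to obtain in $\IL^-(\J{4}_+)$:
\[
(D \rhd (B(\Box\lnot C) \land \Box\lnot B(\Box\lnot C))) \leftrightarrow (D \rhd (B(C \rhd B(\Box\lnot C)) \land \Box\lnot B(C \rhd B(\Box\lnot C)))).
\]
Next, invoke Lemma \ref{l2}.2 twice in $\IL^-(\J{2}_+, \J{5})$: once with $A := B(\Box\lnot C)$ to get $(D \rhd (B(\Box\lnot C) \land \Box\lnot B(\Box\lnot C))) \leftrightarrow (D \rhd B(\Box\lnot C))$, and once with $A := B(C \rhd B(\Box\lnot C))$ to get the corresponding equivalence for the other side. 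Chaining these three equivalences in $\IL^-(\J{2}_+, \J{5})$ yields exactly
\[
(D \rhd B(\Box\lnot C)) \leftrightarrow (D \rhd B(C \rhd B(\Box\lnot C))),
\]
as required.

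There is no real obstacle; the only thing to be careful about is that Lemma \ref{l2}.2 genuinely requires both $\J{2}_+$ and $\J{5}$, which is why the conclusion is stated for $\IL^-(\J{2}_+, \J{5})$ rather than for the weaker $\IL^-(\J{2}, \J{4}_+, \J{5})$ used in Lemma \ref{l4}. The structural parallel with Lemma \ref{l4} is exact: Lemmas \ref{l3} and \ref{l5} are the $\R{2}$- and $\R{1}$-ready pairs built from $\J{4}_+$, and Lemma \ref{l2}.1 and Lemma \ref{l2}.2 are the matching collapse principles for the left and right arguments of $\rhd$.
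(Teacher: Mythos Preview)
Your proof is correct and follows essentially the same route as the paper's own proof: apply $\R{1}$ to the equivalence from Lemma~\ref{l5} to obtain the $D \rhd (\cdot)$ version in $\IL^-(\J{4}_+)$, then invoke Lemma~\ref{l2}.2 on each side to collapse $A \land \Box\lnot A$ to $A$ in $\IL^-(\J{2}_+, \J{5})$. The paper's write-up is more compressed, but the argument is identical.
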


\begin{proof}
By Lemma \ref{l5} and $\R{1}$, $\IL^-(\J{4}_{+})$ proves
\[
(D \rhd (B(\Box \lnot C) \land \Box \lnot B(\Box \lnot C))) \leftrightarrow (D \rhd (B(C \rhd B(\Box \lnot C)) \land \Box \lnot B(C \rhd B(\Box \lnot C)))).
\]
Therefore, by Lemma \ref{l2}.2, 
\[
\IL^-(\J{2}_{+}, \J{5}) \vdash (D \rhd B(\Box \lnot C)) \leftrightarrow (D \rhd B(C \rhd B(\Box \lnot C))).
\]
\end{proof}

\begin{proof}[Proof of Theorem \ref{FPrhd}]
Let $F \equiv A(\top) \rhd B(\Box \lnot A(\top))$. 
By Lemma \ref{l4} for $C \equiv D \equiv B(\Box \lnot A(\top))$, we obtain 
\[ 
\IL^-(\J{2}, \J{4}_+, \J{5}) \vdash F \leftrightarrow (A(F) \rhd B(\Box \lnot A(\top))). 
\]
Furthermore, by Lemma \ref{l6} for $C \equiv A(\top)$ and $D \equiv F$, 
\[
\IL^-(\J{2}_+, \J{5}) \vdash (A(F) \rhd B(\Box \lnot A(\top))) \leftrightarrow (A(F) \rhd B(F)).
\]
We conclude  
\[
\IL^-(\J{2}_+, \J{5}) \vdash F \leftrightarrow A(F) \rhd B(F).
\]
\end{proof}

\subsection{$\ell$FPP for $\IL^{-}(\J{4}, \J{5})$}

From Lemma \ref{l4}, we immediately obtain the following corollary. 

\begin{cor}
For any formulas $A(p)$ and $B$, if $p \notin v(B)$, then $A(\top) \rhd B$ is a fixed point of $A(p) \rhd B$ in $\IL^-(\J{2}, \J{4}_+, \J{5})$. 
\end{cor}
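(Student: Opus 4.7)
The plan is very short because the desired statement is an immediate specialization of Lemma \ref{l4}. Given a formula $A(p)$ and a formula $B$ with $p \notin v(B)$, set $F \equiv A(\top) \rhd B$. Since $p \notin v(B)$, substituting $F$ for $p$ in $A(p) \rhd B$ yields exactly $A(F) \rhd B$, so the fixed point equation to establish is
\[
    \IL^-(\J{2}, \J{4}_{+}, \J{5}) \vdash (A(\top) \rhd B) \leftrightarrow (A(A(\top) \rhd B) \rhd B).
\]

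I would instantiate Lemma \ref{l4} with $C \equiv B$ and $D \equiv B$. Lemma \ref{l4} gives
\[
    \IL^-(\J{2}, \J{4}_{+}, \J{5}) \vdash (A(\top) \rhd D) \leftrightarrow (A(A(\top) \rhd C) \rhd D),
\]
and this specialization is precisely the biconditional displayed above.

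There is no real obstacle here: the work has already been done in Lemma \ref{l4}, whose proof combined Lemma \ref{l3} (which handles the substitution inside $A(\cdot)$ using the Substitution Principle and the fact that $A(\top)$ and $A(A(\top) \rhd C)$ agree modulo $\Box \lnot A(\top)$) with Lemma \ref{l2}.1 (which lets one replace $X \rhd D$ by $(X \land \Box\lnot X) \rhd D$ under $\J{2}$ and $\J{5}$). The hypothesis $p \notin v(B)$ is used only to ensure that substituting $F$ for $p$ in $A(p) \rhd B$ produces $A(F) \rhd B$ rather than a formula in which $B$ has also been modified; once this is observed, the corollary follows with no further calculation.
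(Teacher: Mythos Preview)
Your proposal is correct and matches the paper's own treatment: the paper states this corollary as an immediate consequence of Lemma \ref{l4}, and your instantiation $C \equiv D \equiv B$ together with the observation that $p \notin v(B)$ makes the substitution yield $A(F) \rhd B$ is exactly the intended derivation.
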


Therefore $\IL^-(\J{2}, \J{4}_+, \J{5})$ has $\ell$FPP. 
Moreover, we prove the following theorem. 

\begin{thm}[$\ell$FPP for $\IL^-(\J{4}, \J{5})$]\label{Thm:lFPP}
For any formulas $A(p)$ and $B$, if the formula $A(p) \rhd B$ is left-modalized in $p$, then $A(\Box \neg A(\top)) \rhd B$ is a fixed point of $A(p) \rhd B$ in $\IL^-(\J{4}, \J{5})$. 
Therefore $\IL^-(\J{4}, \J{5})$ has $\ell$FPP. 
\end{thm}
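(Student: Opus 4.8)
The plan is to reduce everything to the single fixed-point identity and then appeal to the usual machinery. Write $G \equiv \Box\neg A(\top)$ and $F \equiv A(G)\rhd B$. Since $A(p)\rhd B$ is left-modalized in $p$, we have $p\notin v(B)$, and every subformula $D\rhd E$ of the context $A(p)$ is $p$-free on the right. The crucial observation is that this last fact lets me invoke the substitution principle of Proposition \ref{p'2}.1 for the context $A(p)$ \emph{itself} — even though $A(p)$ need not be modalized in $p$ — so the whole argument stays inside $\IL^-$ (hence inside $\IL^-(\J{4},\J{5})$) and never calls on $\J{4}_+$ or $\J{2}$. It then suffices to prove
\[
\IL^-(\J{4},\J{5})\vdash (A(G)\rhd B)\leftrightarrow (A(F)\rhd B),
\]
which is exactly $F\leftrightarrow A(F)\rhd B$.

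First I would record two facts about the seed $G$. Applying Proposition \ref{p'2}.1 with $\top$ and $G$, together with $\IL^-\vdash G\to\boxdot(\top\leftrightarrow G)$ (which holds since $\Box\varphi\to\Box\Box\varphi$ is provable in $\GL$), gives $\IL^-\vdash \Box\neg A(\top)\to (A(\top)\leftrightarrow A(G))$. Necessitating this and using the axiom $\G{3}$ then yields $\IL^-\vdash \Box\neg A(\top)\to\Box\neg A(G)$, whence $\IL^-\vdash G\to F$ by Proposition \ref{Prop:KO}.1. These are the analogues of Lemmas \ref{l3}--\ref{l4}, but proved in $\IL^-$ via left-modalized substitution instead of via Proposition \ref{p2} (which would need $\J{4}_+$).

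The heart of the proof is to show that $A(F)$ and $A(G)$ can disagree only at worlds from which $B$ is already reachable, so the disagreement is harmless for the outer $\rhd B$. Using the instance $A(G)\rhd B\to(\Diamond A(G)\to\Diamond B)$ of $\J{4}$, the $\GL$-theorem $\Diamond A(\top)\to\Diamond(A(\top)\land\Box\neg A(\top))$, and $A(\top)\land\Box\neg A(\top)\to A(G)$ from the previous step, I would first obtain
\[
\IL^-(\J{4})\vdash (A(G)\rhd B)\land\Diamond A(\top)\to\Diamond B,
\]
i.e.\ $\vdash (F\land\neg G)\to\Diamond B$. Next, Proposition \ref{p'2}.1 gives $\vdash \neg(A(F)\leftrightarrow A(G))\to\neg\boxdot(F\leftrightarrow G)$; since $\vdash G\to F$, the consequent implies $(F\land\neg G)\lor\Diamond(F\land\neg G)$, and combining with the display above and $\Diamond\Diamond B\to\Diamond B$ produces
\[
\IL^-(\J{4},\J{5})\vdash (A(F)\land\neg A(G))\to\Diamond B,
\]
and symmetrically $\vdash (A(G)\land\neg A(F))\to\Diamond B$.

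Finally I would close each direction with $\J{3}$ and $\J{5}$. From the last display, $\vdash A(F)\to A(G)\lor\Diamond B$; since $\J{5}$ gives $\vdash \Diamond B\rhd B$, axiom $\J{3}$ yields $\vdash (A(G)\rhd B)\to (A(G)\lor\Diamond B)\rhd B$, and then rule $\R{2}$ gives $\vdash (A(G)\lor\Diamond B)\rhd B\to A(F)\rhd B$; composing these gives $(A(G)\rhd B)\to(A(F)\rhd B)$, and the symmetric computation gives the converse. This establishes the equivalence, hence the fixed point. General $\ell$FPP then follows by the usual argument: a left-modalized $A(p)$ is a Boolean combination of $p$-free formulas, boxed formulas $\Box C(p)$, and formulas $D(p)\rhd E$ with $p\notin v(E)$; the first two kinds of atoms have fixed points by the theorem for $\GL$, the third by what was just proved, and these are assembled using the Substitution Principle of Proposition \ref{p'2}. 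The main obstacle is precisely the absence of $\J{2}$: the $\IL^-(\J{2}_+,\J{5})$ proof uses Lemma \ref{l4}, whose proof strips a conjunct $\Box\neg(\cdot)$ via Lemma \ref{l2}.1 and hence needs transitivity of $S_w$. Here that stripping is unavailable, which forces the seed to be $\Box\neg A(\top)$ rather than $\top$ and forces the discrepancy between $A(F)$ and $A(G)$ to be routed through $\J{4}$ and $\J{5}$ into a harmless $\Diamond B$ rather than eliminated outright.
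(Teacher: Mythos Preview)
Your proof is correct and rests on the same core idea as the paper's: under $\Box\neg B$ the formulas $G=\Box\neg A(\top)$ and $F=A(G)\rhd B$ coincide (one direction by Proposition~\ref{Prop:KO}.1, the other by $\J{4}$), so $A(F)$ and $A(G)$ can differ only where $\Diamond B$ holds, and this discrepancy is absorbed by $\J{3}$, $\J{5}$ and $\R{2}$. The paper organizes this differently. It first proves the parametric Lemma~\ref{Lem:lFPP},
\[
\IL^-(\J{4},\J{5})\vdash (A(\Box\neg A(p))\rhd B)\leftrightarrow (A(A(p)\rhd B)\rhd B),
\]
using the single clean pivot $\Box\neg B\to(\Box\neg A(p)\leftrightarrow A(p)\rhd B)$ and one application of Proposition~\ref{p'2}.1, and then instantiates $p\mapsto\Box\neg A(\top)$ via Lemma~\ref{FPbox}. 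You instead work directly at the instantiated level and take a small detour, routing $F\land\neg G\to\Diamond B$ through $\Diamond A(\top)\to\Diamond(A(\top)\land\Box\neg A(\top))\to\Diamond A(G)$; the paper's pivot yields this in one step from $\J{4}$ once one has the full $\GL$ fixed-point equivalence $G\leftrightarrow\Box\neg A(G)$ (you only derive the forward half $G\to\Box\neg A(G)$). Both routes are valid; the paper's is a bit shorter and isolates the reusable Lemma~\ref{Lem:lFPP}. A minor point: the step you attribute to $\G{3}$ actually only uses $\mathbf{4}$.
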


Before proving Theorem \ref{Thm:lFPP}, we prepare two lemmas. 

\begin{lem}\label{FPbox}
For any formula $A(p)$ such that $\Box A(p)$ is left-modalized in $p$, 
\[
\IL^- \vdash \Box A(\top) \leftrightarrow \Box A(\Box A(\top)). 
\]
\end{lem}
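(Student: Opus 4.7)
The plan is to exploit Proposition~\ref{p'2}, the substitution principle for left-modalized formulas, together with L\"ob's theorem $\G{3}$, handling the two implications separately. Observe first that the hypothesis that $\Box A(p)$ is left-modalized in $p$ entails both that $\Box A(p)$ is itself left-modalized (so Proposition~\ref{p'2}.2 applies to it directly) and that for every subformula $D \rhd E$ of $A(p)$ we have $p \notin v(E)$, since such a subformula is also a subformula of $\Box A(p)$ (so Proposition~\ref{p'2}.1 applies with $C(p) := A(p)$).

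For the forward direction $\Box A(\top) \to \Box A(\Box A(\top))$, I would apply Proposition~\ref{p'2}.2 to the left-modalized formula $\Box A(p)$ with the substitution $\top$ versus $\Box A(\top)$. Since $\top \leftrightarrow \Box A(\top)$ is propositionally equivalent to $\Box A(\top)$, this yields
\[
\IL^- \vdash \Box \Box A(\top) \to (\Box A(\top) \leftrightarrow \Box A(\Box A(\top))).
\]
Combined with $\IL^- \vdash \Box A(\top) \to \Box \Box A(\top)$, which is a routine consequence of $\G{3}$, the forward direction follows at once.

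The main obstacle is the backward direction $\Box A(\Box A(\top)) \to \Box A(\top)$, which I expect to handle by a L\"ob-style argument. Apply Proposition~\ref{p'2}.1 to $A(p)$ under the same substitution to obtain $\IL^- \vdash \boxdot(\top \leftrightarrow \Box A(\top)) \to (A(\top) \leftrightarrow A(\Box A(\top)))$, which simplifies (again using $\Box A(\top) \to \Box \Box A(\top)$) to
\[
\IL^- \vdash \Box A(\top) \to (A(\top) \leftrightarrow A(\Box A(\top))).
\]
Rearranging propositionally gives $\IL^- \vdash A(\Box A(\top)) \to (\Box A(\top) \to A(\top))$; Necessitation and $\G{2}$ then yield $\IL^- \vdash \Box A(\Box A(\top)) \to \Box(\Box A(\top) \to A(\top))$. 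Finally, L\"ob's theorem $\G{3}$ converts $\Box(\Box A(\top) \to A(\top))$ into $\Box A(\top)$, closing the argument.
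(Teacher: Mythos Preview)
Your proof is correct and is precisely the ``usual way'' the paper alludes to: the paper's own proof consists of the single sentence that the result follows in the usual way from Proposition~\ref{p'2}, and your argument spells out exactly that standard derivation via Proposition~\ref{p'2}.1, \ref{p'2}.2 and L\"ob's axiom~$\G{3}$.
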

\begin{proof}
This is proved in a usual way by using Proposition \ref{p'2}. 
\end{proof}

\begin{lem}\label{Lem:lFPP}
Let $A(p)$ and $B$ be any formulas such that for any subformula $D \rhd E$ of $A(p)$, $p \notin v(E)$. 
Then 
\[
	\IL^-(\J{4}, \J{5}) \vdash (A(\Box \neg A(p)) \rhd B) \leftrightarrow (A(A(p) \rhd B) \rhd B). 
\]
\end{lem}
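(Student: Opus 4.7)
The plan is to reduce Lemma \ref{Lem:lFPP} to the substitution principle Proposition \ref{p'2}.1 by means of a ``substitution modulo $\Diamond B$'' argument. Set $U := \Box \neg A(p)$ and $V := A(p) \rhd B$. By Proposition \ref{Prop:KO}.1 we have $\IL^- \vdash U \to V$, and by $\J{4}$ (which gives $A(p) \rhd B \to (\Diamond A(p) \to \Diamond B)$) we also have $\IL^-(\J{4}) \vdash V \to U \vee \Diamond B$. Necessitating, both $\boxdot(U \to V)$ and $\boxdot(V \to U \vee \Diamond B)$ are theorems of $\IL^-(\J{4})$.

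The heart of the argument is the following auxiliary claim, which I would establish inside $\IL^-$: for any formula $C(q)$ in which $q$ does not occur on the right of any $\rhd$-subformula,
\[
    \IL^- \vdash \boxdot(U \to V) \wedge \boxdot(V \to U \vee \Diamond B) \to ((C(U) \leftrightarrow C(V)) \vee \Diamond B).
\]
To prove it, reason under the additional assumption $\neg \Diamond B$ (equivalently $\Box \neg B$). Using that $\IL^-$ proves $\Box \phi \to \Box \Box \phi$ (a standard consequence of $\G{3}$), we obtain $\Box \neg \Diamond B$, so $\boxdot(V \to U \vee \Diamond B)$ collapses to $\boxdot(V \to U)$. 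Together with $\boxdot(U \to V)$ this gives $\boxdot(U \leftrightarrow V)$, and Proposition \ref{p'2}.1 then delivers $C(U) \leftrightarrow C(V)$, as desired.

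Applying this to $C := A$, whose hypothesis in the lemma is precisely the required side condition, one obtains $\IL^-(\J{4}) \vdash (A(U) \leftrightarrow A(V)) \vee \Diamond B$, hence in particular $A(V) \to A(U) \vee \Diamond B$ and, symmetrically, $A(U) \to A(V) \vee \Diamond B$. It remains to peel off the $\Diamond B$-disjunct inside the $\rhd B$-context. By $\J{5}$ we have $\Diamond B \rhd B$, so $\J{3}$ turns $A(U) \rhd B$ (together with $\Diamond B \rhd B$) into $(A(U) \vee \Diamond B) \rhd B$; then $\R{2}$ applied to $A(V) \to A(U) \vee \Diamond B$ gives $(A(U) \vee \Diamond B) \rhd B \to A(V) \rhd B$. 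Composing yields $A(U) \rhd B \to A(V) \rhd B$, and the converse follows by a symmetric argument.

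The main technical obstacle is the auxiliary claim. Since $\J{2}$ is unavailable, Lemma \ref{l2} cannot be invoked to absorb a $\Box \neg(\cdot)$-conjunct into the antecedent of a $\rhd$, so the strategy must instead manufacture a genuine equivalence $\boxdot(U \leftrightarrow V)$ out of the one-sided data $U \to V$ and $V \to U \vee \Diamond B$, at the cost of an extra $\Diamond B$-disjunct in the conclusion. The use of $\J{5}$ at the very end is precisely what lets one re-absorb this residue into the consequent $\rhd B$.
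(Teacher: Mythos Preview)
Your proof is correct and follows essentially the same route as the paper's. Both arguments set $U:=\Box\neg A(p)$, $V:=A(p)\rhd B$, use Proposition~\ref{Prop:KO}.1 and $\J{4}$ to obtain $U\to V$ and $\Box\neg B\to(V\to U)$, lift this to $\Box\neg B\to\boxdot(U\leftrightarrow V)$ via $\mathbf{4}$, apply Proposition~\ref{p'2}.1 to get $\Box\neg B\to(A(U)\leftrightarrow A(V))$, and then use $\J{5}$ with $\J{3}$ and $\R{2}$ to absorb the resulting $\Diamond B$-disjunct into the consequent $\rhd B$. The only difference is cosmetic: the paper phrases the key step as $\Box\neg B\to(A(U)\leftrightarrow A(V))$ and then rewrites it as $(A(U)\lor\Diamond B)\leftrightarrow(A(V)\lor\Diamond B)$ before applying $\R{2}$ once, whereas you phrase it disjunctively as $(A(U)\leftrightarrow A(V))\lor\Diamond B$ and handle the two directions separately.
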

\begin{proof}
By Proposition \ref{Prop:KO}.1, $\IL^- \vdash \Box \neg A(p) \to A(p) \rhd B$. 
On the other hand, since $\IL^-(\J{4}) \vdash A(p) \rhd B \to (\Diamond A(p) \to \Diamond B)$, we have $\IL^-(\J{4}) \vdash \Box \neg B \to (A(p) \rhd B \to \Box \neg A(p))$. 
Hence $\IL^-(\J{4}) \vdash \Box \neg B \to (\Box \neg A(p) \leftrightarrow A(p) \rhd B)$. 
Then 
\[
	\IL^-(\J{4}) \vdash \Box \neg B \to \boxdot (\Box \neg A(p) \leftrightarrow A(p) \rhd B).
\] 
By Proposition \ref{p'2}.1, we obtain
\[
	\IL^-(\J{4}) \vdash \Box \neg B \to (A(\Box \neg A(p)) \leftrightarrow A(A(p) \rhd B)).
\] 
Thus 
\[
	\IL^-(\J{4}) \vdash (A(\Box \neg A(p)) \lor \Diamond B) \leftrightarrow (A(A(p) \rhd B) \lor \Diamond B).
\]
By $\R{2}$, we obtain 
\[
	\IL^-(\J{4}) \vdash ((A(\Box \neg A(p)) \lor \Diamond B) \rhd B) \leftrightarrow ((A(A(p) \rhd B) \lor \Diamond B) \rhd B).
\]
Therefore, we conclude 
\[
	\IL^-(\J{4}, \J{5}) \vdash (A(\Box \neg A(p)) \rhd B) \leftrightarrow (A(A(p) \rhd B) \rhd B).
\]
\end{proof}

\begin{proof}[Proof of Theorem \ref{Thm:lFPP}]
Let $F : \equiv \Box \neg A(\top)$. 
Since $\Box \neg A(p)$ is left-modalized in $p$, $\IL^- \vdash F \leftrightarrow \Box \neg A(F)$ by Lemma \ref{FPbox}. 
Since $\IL^-(\J{4}) \vdash \Box (F \leftrightarrow \Box \neg A(F))$, by Proposition \ref{p'2}.2, we have 
\[
	\IL^-(\J{4}) \vdash (A(F) \rhd B) \leftrightarrow (A(\Box \neg A(F)) \rhd B).
\] 
By Lemma \ref{Lem:lFPP}, 
\[
	\IL^-(\J{4}, \J{5}) \vdash (A(\Box \neg A(F)) \rhd B) \leftrightarrow (A(A(F) \rhd B) \rhd B). 
\]
Therefore, 
\[
	\IL^-(\J{4}, \J{5}) \vdash (A(F) \rhd B) \leftrightarrow (A(A(F) \rhd B) \rhd B). 
\]
\end{proof}

\section{Failure of $\ell$FPP, FPP and CIP}\label{Sec:CE}

In this section, we provide counter models of $\ell$FPP for $\CL$ and $\IL^-(\J{1}, \J{5})$, and also provide a counter model of FPP for $\IL^-(\J{1}, \J{4}_+, \J{5})$. 
We also show that CIP is not the case for our sublogics except for $\IL^-(\J{2}_+, \J{5})$ and $\IL$. 
Let $\omega$ be the set $\{0, 1, 2, \ldots\}$ of all natural numbers.

\subsection{A counter model of $\ell$FPP for $\CL$}

In this subsection, we prove that $\IL^-$, $\IL^-(\J{1})$, $\IL^-(\J{4}_+)$, $\IL^-(\J{1}, \J{4}_+)$, $\IL^-(\J{2}_+)$ and $\CL$ have neither $\ell$FPP nor CIP.

\begin{thm}\label{c3}
The formula $p \rhd q$ which is left-modalized in $p$ has no fixed points in $\CL$. 
That is, for any formula $A$ which satisfies $v(A) \subseteq \{q\}$, 
\[
\CL \nvdash A \leftrightarrow A \rhd q. 
\]
\end{thm}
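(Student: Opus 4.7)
The plan is to exhibit, for each $q$-formula $A$, a $\CL$-model in which the biconditional $A \leftrightarrow A \rhd q$ fails at some world. The base observation is that any fixed-point candidate $A$ must satisfy $v \Vdash A$ at every leaf world $v$ of any $\CL$-model, because $A \rhd q$ holds there vacuously; so $\CL \vdash \Box\bot \to A$ is a necessary condition, and more generally the equation $A \leftrightarrow A \rhd q$ propagates bottom-up and forces specific truth values of $A$ at every world once the frame and the $q$-valuation are fixed.

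I would then set up a concrete finite $\CL$-frame, for instance $\mathcal{F} = \langle W, R, \{S_u\}_{u \in W}\rangle$ with $W = \{w, x, a, b\}$, $R$ the transitive closure of $\{(w,x),(x,a),(x,b)\}$ together with $(w,a),(w,b)$, $S_w = \{(x,x),(a,a),(b,b),(b,a)\}$, and $S_x = \{(a,a),(b,b)\}$. By Proposition \ref{Prop:FC} both $\J{1}$ and $\J{2}_+$ are valid in $\mathcal{F}$, so $\mathcal{F}$ is a $\CL$-frame. Under the valuation $a \Vdash q$ and $x, b \not\Vdash q$, unwinding the fixed-point equation at $a, b, x, w$ in turn would force the pattern $A(a) = A(b) = A(w) = \top$ and $A(x) = \bot$: the leaves by the vacuous observation, $x$ because $b \Vdash A$ has no $S_x$-successor satisfying $q$, and $w$ because the only $R$-successor that could spoil the equivalence, namely $x$, already fails $A$. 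This already refutes several natural candidates like $A = \Box q$ and $A = \top \rhd q$, since on $\mathcal{F}$ one has $\Box q(w) = (\top \rhd q)(w) = \bot \neq \top$.

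The main obstacle is that $\mathcal{F}$ alone does not refute every candidate: iterated formulas such as $(\top \rhd q) \rhd q$ or $(\Box q \rhd q) \rhd q$ happen to satisfy the constraints derived from $\mathcal{F}$. To rule out $A$ of arbitrary modal depth $d$, the construction must be parameterized: one builds a richer $\CL$-frame $\mathcal{F}_d$ by iterating the blocking pattern of $\mathcal{F}$, and exhibits two $q$-valuations on $\mathcal{F}_d$ that are $d$-bisimilar with respect to $\{q\}$ but disagree on the truth of $A \rhd q$ at the designated root. Since $A$ has modal depth $d$, the fixed-point equivalence would force the two resulting $\CL$-models to agree on $A \rhd q$ via the bisimulation, contradicting the construction. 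The delicate technical step is engineering $\mathcal{F}_d$ and the two valuations so that the $d$-bisimilarity is preserved while the additional $(d+1)$-depth structure produces the desired asymmetry in $A \rhd q$, all while maintaining $\J{1}$ and $\J{2}_+$; this is the technical heart of the argument.
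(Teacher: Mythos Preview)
Your proposal is a plan rather than a proof. You correctly observe that the equation $A \leftrightarrow A \rhd q$ forces a unique $\{0,1\}$-labelling on any $\CL$-model from the leaves upward, and you verify this on one small frame; but the step you yourself flag as ``the technical heart'' --- building the family $\mathcal{F}_d$ together with two $d$-bisimilar $q$-valuations whose forced labellings disagree at the root --- is exactly the step that carries the argument, and you have not supplied it. There is also a hazard you do not address: bounded bisimulation in the $\rhd$-language is not the ordinary Kripke notion, since preserving depth-$d$ formulas requires back-and-forth conditions on the ternary relations $S_u$ as well as on $R$. You would need to make this precise and verify that your construction respects it, and this is nontrivial for $\CL$-frames.

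The paper sidesteps both the depth parameter and the bisimulation machinery. It constructs a \emph{single} infinite $\CL$-frame on $\{x_i, y_i : i \in \omega\}$ with a specific system of $S_w$-relations, sets $x_i \Vdash q$ and $y_i \nVdash q$, and proves a stabilization lemma: for every formula $A$ with $v(A) \subseteq \{q\}$, the truth value of $A$ along each of the chains $(x_m)_{m \in \omega}$ and $(y_m)_{m \in \omega}$ is eventually constant. On the other hand, the forced fixed-point labelling on this frame is computed (by the same bottom-up propagation you describe) to be ``index is even'', which never stabilizes. This yields the contradiction for \emph{all} $A$ at once, with a single induction on formula structure in place of your family of depth-indexed constructions.
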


\begin{proof}
We define an $\IL^-$-frame $\mathcal{F} = \seq{W, R, \{S_{w}\}_{w \in W}}$ as follows: 
\begin{itemize}
	\item $W := \{x_{i}, y_{i} : i \in \omega\}$;
	\item $R := \{\seq{x_{i}, x_{j}}, \seq{x_{i}, y_{j}}, \seq{y_{i}, x_{j}}, \seq{y_{i}, y_{j}} \in W^2 : i > j \}$;
	\item For each $w_i \in W$ where $w \in \{x, y\}$, $S_{w_i}:=\{\seq{a, a} : w_iRa\} \cup \{\seq{a, b}:$ there exists an even number $k < i-1$ such that $((a = x_{k}$ or $a=y_{k})$ and $b = x_{k+1})\}$. 
\end{itemize}

For example, $S_{x_3}$, $S_{y_3}$, $S_{x_4}$ and $S_{y_4}$ are shown in the following figure (Figure \ref{Fig3}). 

\begin{figure}[th]
\centering
\begin{tikzpicture}

\draw (1, 7) node {$S_{x_3}$ and $S_{y_3}$};

\node [draw, circle] (x_4) at (0,0) {$x_4$};
\node [draw, circle] (x_3) at (0,1.5) {$x_3$};
\node [draw, circle] (x_2) at (0,3) {$x_2$};
\node [draw, circle] (x_1) at (0,4.5) {$x_1$};
\node [draw, circle] (x_0) at (0,6) {$x_0$};
\node [draw, circle] (y_4) at (2,0) {$y_4$};
\node [draw, circle] (y_3) at (2,1.5) {$y_3$};
\node [draw, circle] (y_2) at (2,3) {$y_2$};
\node [draw, circle] (y_1) at (2,4.5) {$y_1$};
\node [draw, circle] (y_0) at (2,6) {$y_0$};

\draw [thick, ->] (x_4)--(x_3);
\draw [thick, ->] (x_3)--(x_2);
\draw [thick, ->] (x_2)--(x_1);
\draw [thick, ->] (x_1)--(x_0);
\draw [thick, ->] (y_4)--(y_3);
\draw [thick, ->] (y_3)--(y_2);
\draw [thick, ->] (y_2)--(y_1);
\draw [thick, ->] (y_1)--(y_0);
\draw [thick, ->] (x_4)--(y_3);
\draw [thick, ->] (x_3)--(y_2);
\draw [thick, ->] (x_2)--(y_1);
\draw [thick, ->] (x_1)--(y_0);
\draw [thick, ->] (y_4)--(x_3);
\draw [thick, ->] (y_3)--(x_2);
\draw [thick, ->] (y_2)--(x_1);
\draw [thick, ->] (y_1)--(x_0);
\draw [thick, ->] (0, -1)--(x_4);
\draw [thick, ->] (2, -1)--(y_4);
\draw [thick, ->] (0.7, -1)--(y_4);
\draw [thick, ->] (1.4, -1)--(x_4);

\draw [thick, ->, dashed] (-0.3, 6.2) arc (45:315:0.3);
\draw [thick, ->, dashed] (2.3, 5.8) arc (-135:135:0.3);
\draw [thick, ->, dashed] (-0.3, 4.7) arc (45:315:0.3);
\draw [thick, ->, dashed] (2.3, 4.3) arc (-135:135:0.3);
\draw [thick, ->, dashed] (-0.3, 3.2) arc (45:315:0.3);
\draw [thick, ->, dashed] (2.3, 2.8) arc (-135:135:0.3);
\draw [thick, ->, dashed] (x_0) to [out=-135, in=135] (x_1);
\draw [thick, ->, dashed] (y_0) to [out=180, in=70] (x_1);

\draw (0.3, 0) node[right] {$q$};
\draw (0.3, 1.5) node[right] {$q$};
\draw (0.3, 3) node[right] {$q$};
\draw (0.3, 4.5) node[right] {$q$};
\draw (0.3, 6) node[right] {$q$};

\draw (6, 7) node {$S_{x_4}$ and $S_{y_4}$};

\node [draw, circle] (a_4) at (5,0) {$x_4$};
\node [draw, circle] (a_3) at (5,1.5) {$x_3$};
\node [draw, circle] (a_2) at (5,3) {$x_2$};
\node [draw, circle] (a_1) at (5,4.5) {$x_1$};
\node [draw, circle] (a_0) at (5,6) {$x_0$};
\node [draw, circle] (b_4) at (7,0) {$y_4$};
\node [draw, circle] (b_3) at (7,1.5) {$y_3$};
\node [draw, circle] (b_2) at (7,3) {$y_2$};
\node [draw, circle] (b_1) at (7,4.5) {$y_1$};
\node [draw, circle] (b_0) at (7,6) {$y_0$};

\draw [thick, ->] (a_4)--(a_3);
\draw [thick, ->] (a_3)--(a_2);
\draw [thick, ->] (a_2)--(a_1);
\draw [thick, ->] (a_1)--(a_0);
\draw [thick, ->] (b_4)--(b_3);
\draw [thick, ->] (b_3)--(b_2);
\draw [thick, ->] (b_2)--(b_1);
\draw [thick, ->] (b_1)--(b_0);
\draw [thick, ->] (a_4)--(b_3);
\draw [thick, ->] (a_3)--(b_2);
\draw [thick, ->] (a_2)--(b_1);
\draw [thick, ->] (a_1)--(b_0);
\draw [thick, ->] (b_4)--(a_3);
\draw [thick, ->] (b_3)--(a_2);
\draw [thick, ->] (b_2)--(a_1);
\draw [thick, ->] (b_1)--(a_0);
\draw [thick, ->] (5, -1)--(a_4);
\draw [thick, ->] (7, -1)--(b_4);
\draw [thick, ->] (5.7, -1)--(b_4);
\draw [thick, ->] (6.4, -1)--(a_4);

\draw [thick, ->, dashed] (4.7, 6.2) arc (45:315:0.3);
\draw [thick, ->, dashed] (7.3, 5.8) arc (-135:135:0.3);
\draw [thick, ->, dashed] (4.7, 4.7) arc (45:315:0.3);
\draw [thick, ->, dashed] (7.3, 4.3) arc (-135:135:0.3);
\draw [thick, ->, dashed] (4.7, 3.2) arc (45:315:0.3);
\draw [thick, ->, dashed] (7.3, 2.8) arc (-135:135:0.3);
\draw [thick, ->, dashed] (4.7, 1.7) arc (45:315:0.3);
\draw [thick, ->, dashed] (7.3, 1.3) arc (-135:135:0.3);

\draw [thick, ->, dashed] (a_0) to [out=-135, in=135] (a_1);
\draw [thick, ->, dashed] (b_0) to [out=180, in=70] (a_1);
\draw [thick, ->, dashed] (a_2) to [out=-135, in=135] (a_3);
\draw [thick, ->, dashed] (b_2) to [out=180, in=70] (a_3);

\draw (5.3, 0) node[right] {$q$};
\draw (5.3, 1.5) node[right] {$q$};
\draw (5.3, 3) node[right] {$q$};
\draw (5.3, 4.5) node[right] {$q$};
\draw (5.3, 6) node[right] {$q$};

\end{tikzpicture}
\caption{A counter model of $\ell$FPP for $\CL$}\label{Fig3}
\end{figure}


It is easy to show that $\J{1}$ and $\J{2}_+$ are valid in $\mathcal{F}$. 
Thus $\CL$ is valid in $\mathcal{F}$ by Proposition \ref{Prop:KO}.9. 
Let $\Vdash$ be a satisfaction relation on $\mathcal{F}$ such that for any $i \in \omega$, $x_i \Vdash q$ and $y_i \nVdash q$. 
For each $w \in W$, we say that $i \in \omega$ is an \textit{index} of $w$ if either $w = x_i$ or $w = y_i$. 








\begin{cl}\label{CLcl1}
For any formula $A$ with $v(A) \subseteq \{q\}$, there exists an $n \in \omega$ satisfying the following two conditions:
\begin{enumerate}
	\item Either $\forall m \geq n\, (x_{m} \Vdash A)$ or $\forall m \geq n\, (x_{m} \nVdash A)$;
	\item Either $\forall m \geq n\, (y_{m} \Vdash A)$ or $\forall m \geq n\, (y_{m} \nVdash A)$. 
\end{enumerate}
\end{cl}

\begin{proof}
We prove by induction on the construction of $A$. 

\vspace{2mm}
\noindent
$A \equiv \bot$: Then $\forall m \geq 0\, (x_m \nVdash A$ and $y_m \nVdash A)$.

\vspace{2mm}
\noindent
$A \equiv q$: Then $\forall m \geq 0\, (x_m \Vdash q$ and $y_m \nVdash q)$.

\vspace{2mm}
\noindent
$A \equiv B \to C$: 
By induction hypothesis, there exist $n_{1}, n_{2} \in W$ satisfying the statement of the claim for $B$ and $C$, respectively. 
Let $n = \max\{n_1, n_2\}$. 
We distinguish the following three cases.
\begin{itemize}
\item $\forall m \geq n\, (x_m \nVdash B)$: Then $\forall m \geq n\, (x_m \Vdash B \to C)$. 
\item $\forall m \geq n\, (x_m \Vdash C)$: Then $\forall m \geq n\, (x_m \Vdash B \to C)$. 
\item $\forall m \geq n\, (x_m \Vdash B)$ and $\forall m \geq n\, (x_m \nVdash C)$: Then $\forall m \geq n\, (x_m \nVdash B \to C)$. 
\end{itemize}
In a similar way, it is proved that either $\forall m \geq n\, (y_{m} \Vdash B \to C)$ or $\forall m \geq n\, (y_{m} \nVdash B \to C)$. 

\vspace{2mm}
\noindent
$A \equiv \Box B$: We distinguish the following two cases.
\begin{itemize}
\item There exists an $n \in W$ such that either $x_n \nVdash B$ or $y_n \nVdash B$: Then $\forall m \geq n + 1\, (x_m \nVdash \Box B$ and $y_m \nVdash \Box B)$. 

\item For all $n \in W$, $x_n \Vdash B$ and $y_n \Vdash B$: Then $\forall m \geq 0\, (x_m \Vdash \Box B$ and $y_m \Vdash \Box B)$. 
\end{itemize}

\vspace{2mm}
\noindent
$A \equiv B \rhd C$: We distinguish the following five cases. 

\begin{itemize}
	\item (Case 1): 
There exists an even number $k$ such that $x_{k} \Vdash B$, $x_{k} \nVdash C$ and $x_{k+1} \nVdash C$.  
Let $m \geq k+2$. 
Then, $x_{m} R x_{k}$ and $x_{k} \Vdash B$. 
For any $v \in W$ which satisfies $x_{k}S_{x_{m}} v$, either $v = x_{k}$ or $v = x_{k+1}$ by the definition of $S_{x_{m}}$. 
Thus, $v \nVdash C$. 
Therefore, we obtain $x_{m} \nVdash B \rhd C$. 
Since $y_{m} R x_{k+1}$, we also obtain $y_{m} \nVdash B \rhd C$ in a similar way. 

	\item (Case 2): 
There exists an even number $k$ such that $y_{k} \Vdash B$, $y_{k} \nVdash C$ and $x_{k+1} \nVdash C$. 
It is proved that $k+2$ witnesses the claim as in Case 1. 

	\item (Case 3): 
There exists an odd number $k$ such that $x_{k} \Vdash B$ and $x_{k} \nVdash C$. 
Let $m \geq k+1$. 
Then, $x_{m} R x_{k}$ and $x_{k} \Vdash B$. 
For any $v \in W$ satisfying $x_{k}S_{x_{m}} v$, $v = x_{k}$ by the definition of $S_{x_{m}}$. 
Thus, $v \nVdash C$. 
Therefore, we obtain $x_{m} \nVdash B \rhd C$. 
Since $y_{m} R x_{k}$, $y_{m} \nVdash B \rhd C$ is also proved. 

	\item (Case 4): There exists an odd number $k$ such that $y_{k} \Vdash B$ and $y_{k} \nVdash C$. 
It is proved that $k+1$ witnesses the claim as in Case 3. 

	\item (Case 5): Otherwise, all of the following conditions are satisfied. 
\begin{itemize}
	\item[(I)] For any even number $k$, if $x_{k} \Vdash B$, then either $x_{k} \Vdash C$ or $x_{k+1} \Vdash C$. 
	\item[(II)] For any even number $k$, if $y_{k} \Vdash B$, then either $y_{k} \Vdash C$ or $x_{k+1} \Vdash C$. 
	\item[(III)] For any odd number $k$, if $x_{k} \Vdash B$, then $x_{k} \Vdash C$. 
	\item[(IV)] For any odd number $k$, if $y_{k} \Vdash B$, then $y_{k} \Vdash C$. 
\end{itemize}

By induction hypothesis, there exists an $n_{0} \in \omega$ which is a witness of the statement of the claim for $B$. 
We define a natural number $n$ so that for any $z \in W$ with the index $i$, if $i \geq n-1$, then $z \Vdash \neg B \lor C$. 
We distinguish the following four cases. 
 
\begin{itemize}
	\item $\forall m \geq n_{0}\, (x_{m} \Vdash B$ and $y_{m} \Vdash B)$: 
Then, by (III) and (IV), there are infinitely many odd numbers $k$ such that $x_{k} \Vdash C$ and $y_{k} \Vdash C$. 
Thus, by induction hypothesis, there exists an $n_{1} \in \omega$ such that $\forall m \geq n_{1}\, (x_{m} \Vdash C$ and $y_{m} \Vdash C)$. 
Then, we define $n := \max\{n_{0}, n_{1}\} + 1$. 

	\item $\forall m \geq n_{0}\, (x_{m} \Vdash B$ and $y_{m} \nVdash B)$:
Then, by (III), there are infinitely many odd numbers $k$ such that $x_{k} \Vdash C$. 
Thus, by induction hypothesis, there exists an $n_{1} \in \omega$ such that $\forall m \geq n_{1}\, (x_{m} \Vdash C)$. 
Then, we define $n := \max\{n_{0}, n_{1}\} + 1$. 

	\item $\forall m \geq n_{0}\, (x_{m} \nVdash B$ and $y_{m} \Vdash B)$: 
Then, by (IV), there are infinitely many odd numbers $k$ such that $y_{k} \Vdash C$. 
Thus, by induction hypothesis, there exists an $n_{1} \in \omega$ such that $\forall m \geq n_{1}\, (y_{m} \Vdash C)$. 
Then, we define $n : = \max\{n_{0}, n_{1}\} + 1$. 

	\item $\forall m \geq n_{0}\, (x_{m} \nVdash B$ and $y_{m} \nVdash B)$: 
We define $n : = n_{0} + 1$. 
\end{itemize}

Let $m \geq n$ and $z \in W$ be such that $x_{m} R z$ and $z \Vdash B$. 
We show that there exists a $v \in W$ such that $z S_{x_{m}} v$ and $v \Vdash C$. 
Let $i$ be an index of $z$. 
If $i$ is odd, then $z S_{x_{m}} z$ and $z \Vdash C$ by (III) and (IV). 
Assume that $i$ is even. 
We distinguish the following two cases. 
\begin{itemize}
	\item $n-1 \leq i < m$: 
We obtain $z \Vdash \lnot B \lor C$ by the definition of $n$. 
Since $z \Vdash B$, $z \Vdash C$. 
By the definition of $S_{x_{m}}$, $z S_{x_{m}} z$. 

	\item $i < n-1$: Then $i < m-1$. 
Therefore $z S_{x_{m}} z$ and $z S_{x_{m}} x_{i+1}$. 
Furthermore, by (I) and (II), we obtain $z \Vdash C$ or $x_{i+1} \Vdash C$. 
\end{itemize}
In any case, there exists $v \in W$ such that $z S_{x_{m}} v$ and $v \Vdash C$. 
Therefore, we obtain $x_{m} \Vdash B \rhd C$. 
Similarly, we have $y_m \Vdash B \rhd C$.  
\end{itemize}
\end{proof}

We suppose, towards a contradiction, that there exists a formula $A$ such that $v(A) \subseteq \{q\}$ and $\CL \vdash A \leftrightarrow A \rhd q$. 
Since $\CL$ is valid in $\mathcal{F}$, $A \leftrightarrow A \rhd q$ is valid in $\mathcal{F}$. 
Moreover, the following claim holds. 

\begin{cl}
For any $w \in W$ whose index is $n$, $n$ is even if and only if $w \Vdash A$. 
\end{cl}

\begin{proof}
We prove by induction on $n$. Let $w \in W$ be any element whose index is $n$. 

For $n =0$, since there is no $w' \in W$ such that $w R w'$, we obtain $w \Vdash A \rhd  q$ and hence, $w \Vdash A$. 
Suppose $n > 0$ and that the claim holds for any natural number less than $n$. 

$(\Leftarrow)$: Assume that $n$ is an odd number. 
Then $w R y_{n-1}$. 
Since $n-1$ is even, $y_{n-1} \Vdash A$ by induction hypothesis. 
Let $v$ be any element in $W$ satisfying $y_{n-1} S_{w} v$. 
By the definitions of $S_{w}$ and $\Vdash$, we obtain $v = y_{n-1}$ and $v \nVdash q$. 
Therefore, $w \nVdash A \rhd q$ and hence $w \nVdash A$. 

$(\Rightarrow)$: Assume that $n$ is an even number. 
Let $v$ be any element in $W$ with $w R v$ and $v \Vdash A$. 
Let $m$ be the index of $v$. 
Since $m < n$ and $v \Vdash A$, $m$ is even by induction hypothesis. 
Since $n$ is also even, $m < n-1$ and hence $v S_{w} x_{m+1}$. 
Furthermore, $x_{m+1} \Vdash q$ by the definition of $\Vdash$. 
Therefore, we obtain $w \Vdash A \rhd q$ and hence, $w \Vdash A$. 
\end{proof}

This contradicts Claim \ref{CLcl1}. 
Therefore, for any formula $A$ with $v(A) \subseteq \{q\}$, we obtain $\CL \nvdash A \leftrightarrow A \rhd q$. 
\end{proof}

\begin{cor}\label{Cor:15}
Let $L$ be any logic such that $\IL^{-} \subseteq L \subseteq \CL$. 
Then $L$ has neither $\ell$FPP nor CIP. 
\end{cor}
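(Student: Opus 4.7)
The plan is to derive the corollary directly from Theorem \ref{c3} together with Lemma \ref{lFPP-ClU}. Since the formula $p \rhd q$ is manifestly left-modalized in $p$ (its only subformula of the form $D \rhd E$ is $p \rhd q$ itself, and $p \notin v(q)$), establishing the absence of $\ell$FPP amounts to showing that $p \rhd q$ has no fixed point in $L$.

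First, I would argue the failure of $\ell$FPP. Suppose for contradiction that $L$ has $\ell$FPP. Then there would exist a formula $A$ with $v(A) \subseteq v(p \rhd q) \setminus \{p\} = \{q\}$ such that $L \vdash A \leftrightarrow A \rhd q$. Since $L \subseteq \CL$, we would conclude $\CL \vdash A \leftrightarrow A \rhd q$, directly contradicting Theorem \ref{c3}. Therefore $L$ does not have $\ell$FPP.

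Next I would deduce the failure of CIP. Any logic $L$ with $\IL^- \subseteq L \subseteq \CL$ is axiomatized over $\IL^-$ by additional schemata, and hence is closed under substituting formulas for propositional variables. So Lemma \ref{lFPP-ClU} applies: if such an $L$ had CIP, it would have $\ell$FPP. By the previous paragraph, $L$ does not have $\ell$FPP, so $L$ cannot have CIP either.

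There is essentially no obstacle here, since the hard semantical work has already been done in the proof of Theorem \ref{c3}. The only minor point worth noting explicitly is the closure of $L$ under substitution, which holds because all logics between $\IL^-$ and $\CL$ in our setting are obtained by adding axiom schemata and share the same inference rules (Modus Ponens, Necessitation, $\R{1}$, $\R{2}$), all of which preserve substitution instances.
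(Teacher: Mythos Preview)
Your proposal is correct and follows essentially the same approach as the paper: derive failure of $\ell$FPP from Theorem \ref{c3} (any fixed point in $L$ would be one in $\CL$), then invoke Lemma \ref{lFPP-ClU} to rule out CIP. You are in fact slightly more careful than the paper in making the substitution-closure hypothesis of Lemma \ref{lFPP-ClU} explicit.
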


\begin{proof}
By Theorem \ref{c3}, every sublogic of $\CL$ does not have $\ell$FPP. 
By Lemma \ref{lFPP-ClU}, every logic $L$ such that $\IL^- \subseteq L \subseteq \CL$ does not have CIP. 
\end{proof}

\subsection{A counter model of $\ell$FPP for $\IL^{-}(\J{1}, \J{5})$}

In this subsection, we prove that $\IL^-(\J{5})$ and $\IL^-(\J{1}, \J{5})$ have neither $\ell$FPP nor CIP.

\begin{thm}\label{c2}
The formula $p \rhd q$ which is left-modalized in $p$ has no fixed point in $\IL^{-}(\J{1}, \J{5})$. 
That is, for any formula $A$ which satisfies $v(A) \subseteq \{q\}$, 
\[
\IL^{-}(\J{1}, \J{5}) \nvdash A \leftrightarrow A \rhd q.
\]
\end{thm}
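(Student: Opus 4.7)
The plan is to adapt the counter model used in the proof of Theorem \ref{c3} so that $\J{5}$ is also valid. Naively closing each $S_w$ under $\J{5}$ in the $\CL$ counter model would trivialize the parity gadget, because the $\J{5}$-forced pairs saturate $S_w$ with $q$-witnesses. To circumvent this, I would thin out $R$ so that the $y$-chain cannot reach the $x$-chain, leaving room for the parity extras to do their work.

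Concretely, define $\mathcal{F} = \seq{W, R, \{S_w\}_{w \in W}}$ by $W := \{x_i, y_i : i \in \omega\}$ and
\[
R := \{\seq{x_i, x_j}, \seq{x_i, y_j}, \seq{y_i, y_j} : i > j\},
\]
so that $y$-worlds only see $y$-worlds. Call $i$ the \emph{index} of $x_i$ and of $y_i$. For each $w$ with index $i$, let $S_w$ consist of the pairs $\seq{a, a}$ with $w R a$ (forced by $\J{1}$), the pairs $\seq{a, b}$ with $w R a$ and $a R b$ (forced by $\J{5}$), together with extra pairs $\seq{a, x_{k+1}}$ where $k$ is even, $k < i-1$, $a \in \{x_k, y_k\}$ and $w R a$. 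By construction $\J{1}$ and $\J{5}$ are valid in $\mathcal{F}$ via Proposition \ref{Prop:FC}. Take $\Vdash$ so that $x_i \Vdash q$ and $y_i \nVdash q$ for all $i$.

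The proof then follows the pattern of Theorem \ref{c3}. First, a stability lemma (analog of Claim \ref{CLcl1}) is proved by induction on formulas: for each $A$ with $v(A) \subseteq \{q\}$ there exists $n \in \omega$ such that the truth value of $A$ at $x_m$ is independent of $m$ for $m \geq n$, and likewise for $y_m$. The $\rhd$-case requires an extended case analysis to accommodate the $\J{5}$-forced pairs. Second, assuming that some $A$ with $v(A) \subseteq \{q\}$ is a fixed point, one proves by induction on the index $n$ that $w \Vdash A$ iff $n$ is even for every $w$ with index $n$. In the even case, the extras $\seq{a, x_{k+1}}$ for each even $k \leq n-2$ deliver the $q$-witness $x_{k+1}$ at every $R$-successor $a$ of $w$ satisfying $A$ (via the inductive hypothesis, such $a$ have even index, so the required extras apply). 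In the odd case, $y_{n-1}$ satisfies $A$ by induction, but $S_w(y_{n-1})$ consists only of $y$-worlds, because $y_{n-1}$'s $R$-successors all lie in the $y$-chain and no extra applies at the boundary $k = n-1$; hence $S_w(y_{n-1})$ contains no $q$-witness and $w \nVdash A \rhd q$. The resulting parity alternation contradicts the stability lemma.

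The main obstacle will be the stability lemma for $\rhd$-formulas, since the interplay between the $\J{5}$-forced pairs and the parity-dependent extras must be disentangled. One must show that, although the extras depend on the parity of $i$, the asymptotic truth value of $B \rhd C$ at $x_m$ and $y_m$ no longer depends on $m$ once $m$ is large enough, for every $B$ and $C$ that are asymptotically stable by induction. This calls for a case analysis similar in spirit to the one in the proof of Theorem \ref{c3}, but with the additional $\J{5}$-witnesses to track and the added asymmetry between the $x$- and $y$-chains.
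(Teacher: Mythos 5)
Your construction is correct and follows essentially the same strategy as the paper's proof: a counter model validating $\J{1}$ and $\J{5}$ in which parity-gated extra $S$-arrows (available only for even indices $k < n-1$) point to $q$-witnesses that the $\J{1}$- and $\J{5}$-forced pairs cannot reach, combined with an asymptotic stability lemma for formulas with $v(A) \subseteq \{q\}$ and a parity lemma whose alternation contradicts stability. The paper's frame is just a leaner version of yours --- a single $\omega$-chain plus one $R$-isolated world $v$ carrying $q$, with extras $\seq{x, v}$ for even $x < n-1$, i.e.\ your model with the $x$-chain collapsed to a single point --- and otherwise the two arguments coincide.
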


\begin{proof}
We define an $\IL^-$-frame $\mathcal{F} = \seq{W, R, \{S_{w}\}_{w \in W}}$ as follows: 
\begin{itemize}
	\item $W := \omega \cup \{v\}$;
	\item $R := \{\seq{x, y} \in W^2 : x, y \in \omega$ and $x > y \}$;
	\item $S_{v}  := \emptyset$ and for each $n \in \omega$, $S_{n}:=\{\seq{x, y} \in W^2 : n R x$ and ($y = x$ or $x R y$ or ($x$ is even, $x < n -1$ and $y = v)) \}$. 
\end{itemize}


For instance, the relations $S_3$ and $S_4$ are shown in the following figure (Figure \ref{Fig4}). 
In the case of $x R y$ for $x, y < n$, $x S_n y$ holds, and the corresponding broken lines are omitted in the figure. 

\begin{figure}[th]
\centering
\begin{tikzpicture}

\draw (1, 6) node {$S_{3}$};

\node [draw, circle] (4) at (0,0) {$4$};
\node [draw, circle] (3) at (0,1.2) {$3$};
\node [draw, circle] (2) at (0,2.4) {$2$};
\node [draw, circle] (1) at (0,3.6) {$1$};
\node [draw, circle] (0) at (0,4.8) {$0$};
\node [draw, circle] (v) at (2,4.8) {$v$};

\draw [thick, ->] (4)--(3);
\draw [thick, ->] (3)--(2);
\draw [thick, ->] (2)--(1);
\draw [thick, ->] (1)--(0);
\draw [thick, ->] (0, -1)--(4);

\draw [thick, ->, dashed] (-0.2, 5) arc (45:315:0.3);
\draw [thick, ->, dashed] (-0.2, 3.8) arc (45:315:0.3);
\draw [thick, ->, dashed] (-0.2, 2.6) arc (45:315:0.3);
\draw [thick, ->, dashed] (0)--(v);

\draw (2.3, 4.8) node[right] {$q$};

\draw (6, 6) node {$S_{4}$};

\node [draw, circle] (4') at (5,0) {$4$};
\node [draw, circle] (3') at (5,1.2) {$3$};
\node [draw, circle] (2') at (5,2.4) {$2$};
\node [draw, circle] (1') at (5,3.6) {$1$};
\node [draw, circle] (0') at (5,4.8) {$0$};
\node [draw, circle] (v') at (7,4.8) {$v$};

\draw [thick, ->] (4')--(3');
\draw [thick, ->] (3')--(2');
\draw [thick, ->] (2')--(1');
\draw [thick, ->] (1')--(0');
\draw [thick, ->] (5, -1)--(4');

\draw [thick, ->, dashed] (4.8, 5) arc (45:315:0.3);
\draw [thick, ->, dashed] (4.8, 3.8) arc (45:315:0.3);
\draw [thick, ->, dashed] (4.8, 2.6) arc (45:315:0.3);
\draw [thick, ->, dashed] (4.8, 1.4) arc (45:315:0.3);
\draw [thick, ->, dashed] (0')--(v');
\draw [thick, ->, dashed] (2')--(v');

\draw (7.3, 4.8) node[right] {$q$};

\end{tikzpicture}
\caption{A counter model of $\ell$FPP for $\IL^-(\J{1}, \J{5})$}\label{Fig4}
\end{figure}









Then $\IL^-(\J{1}, \J{5})$ is valid in $\mathcal{F}$. 
Let $\Vdash$ be a satisfaction relation on $\mathcal{F}$ such that $v \Vdash q$ and for each $n \in \omega$, $n \nVdash q$. 

\begin{cl}\label{15cl1}
For any formula $A$ with $v(A) \subseteq \{q\}$, there exists $n \in \omega$ such that 
\[
	\forall m \geq n\, (m \Vdash A) \ \text{or} \ \forall m \geq n\, (m \nVdash A). 
\]
\end{cl}

\begin{proof}
We prove by induction on the construction of $A$. 
We only prove the case of $A \equiv B \rhd C$. 
We distinguish the following three cases.

\begin{itemize}
\item (Case 1): There exists an even number $k$ such that $k \Vdash B$, for all $j \leq k$,  $j \nVdash C$ and $v \nVdash C$: 
Let $m \geq k+1$. 
Then $m R k$ and $k \Vdash B$. 
For any $w \in W$ which satisfies $k S_{m} w$, since either $w \leq k$ or $w = v$, we obtain $w \nVdash C$. 
Therefore, $m \nVdash B \rhd C$. 

\item (Case 2): There exists an odd number $k$ such that $k \Vdash B$ and for all $j \leq k$, $j \nVdash C$: 
Let $m \geq k+1$. 
Then $m R k$ and $k \Vdash B$. 
For any $w \in W$ which satisfies $k S_{m} w$, $w \nVdash C$ because $w \leq k$. 
Therefore, $m \nVdash B \rhd C$. 

\item (Case 3): Otherwise: Then, the following conditions (I) and (II) are fulfilled. 
\begin{itemize}
	\item[(I)] For any even number $k$, if $k \Vdash B$, then there exists $j \leq k$ such that $j \Vdash C$ or $v \Vdash C$. 
	\item[(II)] For any odd number $k$, if $k \Vdash B$, then there exists $j \leq k$ such that $j \Vdash C$. 
\end{itemize}

By induction hypothesis, there exists an $n_{0} \in \omega$ such that $\forall m \geq n_{0}\, (m \Vdash B)$ or $\forall m \geq n_{0}\, (m \nVdash B)$. 
We may assume that $n_{0}$ is an odd number. 
We distinguish the following two cases. 

\begin{itemize}
	\item $\forall m \geq n_{0}\, (m \Vdash B)$: 
Let $m \geq n_{0} + 1$ and $k$ be any element in $W$ satisfying $m R k$ and $k \Vdash B$. 
Since $n_{0}$ is odd and $n_{0} \Vdash B$, there exists a $j_{0} \leq n_{0}$ such that $j_{0} \Vdash C$ by (II). 
We distinguish the following three cases. 
\begin{itemize}
	\item $k$ is odd: By (II), there exists a $j \leq k$ such that $j \Vdash C$. 
	Then $k S_{m} j$ and $j \Vdash C$. 
	\item $k$ is even and $k \geq n_{0}$: Since $k \geq j_{0}$, we have $k S_{m} j_{0}$ and $j_{0} \Vdash C$. 
	\item $k$ is even and $k < n_{0}$: By (I), there exists $j \leq k$ such that $j \Vdash C$ or $v \Vdash C$. 
Since $k < n_{0} \leq m -1$, we obtain $k < m-1$. 
Hence, $k S_{m} j$ and $k S_{m} v$. 
\end{itemize}
In any case, there exists a $w \in W$ such that $k S_{m} w$ and $w \Vdash C$. 
Therefore, $m \Vdash B \rhd C$. 

\item $\forall m \geq n_{0}\, (m \nVdash B)$: 
Let $m \geq n_{0} + 1$ and $k$ be any element in $W$ satisfying $m R k$ and $k \Vdash B$. 
Then $k < n_{0}$ because $k \Vdash B$. 
We distinguish the following two cases. 
\begin{itemize}
	\item $k$ is odd: Since there exists a $j \leq k$ such that $j \Vdash C$ by (II), $k S_{m} j$ and $j \Vdash C$. 
	\item $k$ is even: By (I), there exists a $j \leq k$ such that $j \Vdash C$ or $v \Vdash C$. 
Since $k < n_{0} \leq m -1$, we obtain $k < m-1$ and hence $k S_{m} j$ and $k S_{m} v$. 
\end{itemize}
\end{itemize}
In any case, there exists a $w \in W$ such that $k S_{m} w$ and $w \Vdash C$. 
Therefore, $m \Vdash B \rhd C$. 

\end{itemize}

\end{proof}

We suppose, towards a contradiction, that there exists a formula $A$ such that $v(A) \subseteq \{q\}$ and $\IL^{-}(\J{1}, \J{5}) \vdash A \leftrightarrow A \rhd q$. 
Since $\IL^-(\J{1}, \J{5})$ is valid in $\mathcal{F}$, $A \leftrightarrow A \rhd q$ is also valid in $\mathcal{F}$. 
Then the following claim holds. 

\begin{cl}
For any $n \in \omega$, $n$ is even if and only if $n \Vdash A$. 
\end{cl}

\begin{proof}
We prove by induction on $n$. 

For $n = 0$, since obviously $0 \Vdash A \rhd q$, we have $0 \Vdash A$. 
Suppose $n > 0$ and the claim holds for any natural number less than $n$. 

$(\Leftarrow)$: Assume that $n$ is odd. 
Then $n R n-1$ and since $n-1$ is even, $n-1 \Vdash A$ by induction hypothesis. 
Let $w$ be the any element in $W$ which satisfies $n-1 S_{n} w$. 
By the definition of $S_{n}$, $w \leq n-1$ and hence $w \nVdash q$. 
Therefore $n \nVdash A \rhd q$, and thus $n \nVdash A$. 

$(\Rightarrow)$: Assume that $n$ is even. 
Let $m$ be the any element in $W$ which satisfies $n R m$ and $m \Vdash A$. 
By induction hypothesis, $m$ is even and hence $m < n-1$. 
Then $m S_{n} v$ and $v \Vdash q$. 
Therefore $n \Vdash A \rhd q$ and hence, $n \Vdash A$. 
\end{proof}

This contradicts Claim \ref{15cl1}. 
Threfore, for any formula $A$ with $v(A) \subseteq \{q\}$, we obtain $\IL^{-}(\J{1}, \J{5}) \nvdash A \leftrightarrow A \rhd q$. 
\end{proof}

As in Corollary \ref{Cor:15}, we obtain the following corollary. 

\begin{cor}
Let $L$ be any logic such that $\IL^{-} \subseteq L \subseteq \IL^-(\J{1}, \J{5})$. 
Then $L$ has neither $\ell$FPP nor CIP. 
\end{cor}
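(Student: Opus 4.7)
The plan is to mirror the proof of Corollary \ref{Cor:15}, replacing Theorem \ref{c3} by Theorem \ref{c2}. The work has essentially been done: Theorem \ref{c2} exhibits a concrete formula, namely $p \rhd q$, which is left-modalized in $p$ but fails to admit a fixed point in $\IL^-(\J{1}, \J{5})$. Since any $L$ with $\IL^- \subseteq L \subseteq \IL^-(\J{1}, \J{5})$ is weaker than $\IL^-(\J{1}, \J{5})$, any fixed point that worked in $L$ would \emph{a fortiori} be a fixed point in $\IL^-(\J{1}, \J{5})$, contradicting Theorem \ref{c2}. Hence $L$ fails $\ell$FPP.

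For CIP, I would invoke Lemma \ref{lFPP-ClU}: every extension of $\IL^-$ closed under substitution which has CIP must also have $\ell$FPP. Contrapositively, since $L$ lacks $\ell$FPP and is (as a normal modal logic given by axiom schemata) closed under substitution, it cannot have CIP either.

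The argument is entirely routine once Theorem \ref{c2} and Lemma \ref{lFPP-ClU} are in hand; there is no real obstacle, only the bookkeeping observation that upward monotonicity of derivability transfers the non-existence of a fixed point from $\IL^-(\J{1}, \J{5})$ down to every sublogic.

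\begin{proof}
By Theorem \ref{c2}, for every formula $A$ with $v(A) \subseteq \{q\}$ we have $\IL^-(\J{1}, \J{5}) \nvdash A \leftrightarrow A \rhd q$. Since $L \subseteq \IL^-(\J{1}, \J{5})$, it follows that $L \nvdash A \leftrightarrow A \rhd q$ for any such $A$. The formula $p \rhd q$ is left-modalized in $p$, so $L$ does not have $\ell$FPP. Since $L$ is closed under substitution and $\IL^- \subseteq L$, Lemma \ref{lFPP-ClU} yields that $L$ does not have CIP either.
\end{proof}
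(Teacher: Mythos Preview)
Your proposal is correct and mirrors the paper's own treatment: the paper simply says ``As in Corollary \ref{Cor:15}, we obtain the following corollary,'' i.e., replace Theorem \ref{c3} by Theorem \ref{c2} and invoke Lemma \ref{lFPP-ClU} for the CIP part, exactly as you do.
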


\subsection{A counter model of FPP for $\IL^-(\J{1}, \J{4}_+, \J{5})$}

In Theorems \ref{c3} and \ref{c2}, we proved that the logics $\CL$ and $\IL^-(\J{1}, \J{5})$ do not have $\ell$FPP. 
On the other hand, we proved in Theorem \ref{Thm:lFPP} that $\IL^-(\J{4}, \J{5})$ has $\ell$FPP. 
Thus we cannot provide a counter model of $\ell$FPP for extensions of $\IL^-(\J{4}, \J{5})$. 
In this subsection, we prove that the logics $\IL^-(\J{4}_+, \J{5})$ and $\IL^-(\J{1}, \J{4}_+, \J{5})$ have neither FPP nor CIP. 

\begin{thm}\label{c1}
The formula $\top \rhd \neg p$ has no fixed point in $\IL^{-}(\J{1}, \J{4}_+, \J{5})$. 
That is, for any formula $A$ with $v(A) = \emptyset$, 
\[
	\IL^{-}(\J{1}, \J{4}_{+}, \J{5}) \nvdash A \leftrightarrow \top \rhd \lnot A.
\]
\end{thm}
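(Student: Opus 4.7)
The plan is to follow the template of Theorems \ref{c3} and \ref{c2} and exhibit a concrete $\IL^-(\J{1}, \J{4}_+, \J{5})$-frame on which no variable-free formula can realize the truth pattern forced by the fixed-point equation for $\top \rhd \neg p$. I would take $\mathcal{F} = \langle W, R, \{S_n\}_{n \in W}\rangle$ with $W := \omega$, $R := \{\seq{n, m} \in W^2 : n > m\}$, and, for each $n \in W$,
\[
S_n := \{\seq{x, y} \in \uparrow(n) \times \uparrow(n) : y = x \text{ or } xRy\} \cup \{\seq{0, n-1} : n \geq 2\}.
\]
By Proposition \ref{Prop:FC}, the axioms $\J{1}$, $\J{4}_+$ and $\J{5}$ are all valid in $\mathcal{F}$: each $S_n$ is reflexive on $\uparrow(n)$, is contained in $\uparrow(n) \times \uparrow(n)$, and contains $R$ restricted to $\uparrow(n)$, while the extra pair $\seq{0, n-1}$ lies in $\uparrow(n) \times \uparrow(n)$ for $n \geq 2$.

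Fixing a satisfaction relation $\Vdash$ on $\mathcal{F}$ and writing $T_A := \{m \in W : m \Vdash A\}$, I would first unpack $\top \rhd \neg A$ using the explicit shape of $S_m$: $0 \in T_{\top \rhd \neg A}$ holds vacuously, $1 \in T_{\top \rhd \neg A}$ iff $0 \notin T_A$, and for $m \geq 2$, $m \in T_{\top \rhd \neg A}$ iff both $(0 \notin T_A \text{ or } m - 1 \notin T_A)$ and $(0 \notin T_A \text{ or } 1 \notin T_A)$. Assuming for a contradiction that some variable-free $A$ satisfies $T_A = T_{\top \rhd \neg A}$, a short induction on $m$ then forces $T_A = \{0, 2, 4, \ldots\}$: concretely, $0 \in T_A$ from the vacuous case $m = 0$, then $1 \notin T_A$ from $m = 1$, and for $m \geq 2$ the equation simplifies to $m \in T_A$ iff $m - 1 \notin T_A$, so $T_A$ alternates to the set of even numbers.

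The main step, in the spirit of Claims \ref{CLcl1} and \ref{15cl1}, is the stabilization claim: \emph{for every formula $A$ with $v(A) = \emptyset$, there exists $N \in \omega$ such that either $m \Vdash A$ for all $m \geq N$ or $m \nVdash A$ for all $m \geq N$.} I would prove this by induction on $A$; the Boolean and $\Box$ cases are routine, and the delicate case is $B \rhd C$. Picking stabilization indices $n_B$ and $n_C$ for $B$ and $C$ from the induction hypotheses, I would split the witness requirement for $\Vdash \rhd$ at $m$ into the case $x = 0$ (where $\{y : 0 S_m y\} = \{0, m-1\}$, so the witness $y = m - 1$ eventually inherits the stable truth value of $C$) and the cases $1 \leq x < m$ (where $\{y : x S_m y\} = \{0, 1, \ldots, x\}$ is independent of $m$). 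A case analysis on the four combinations of eventual truth values of $B$ and $C$ then reduces satisfaction of $B \rhd C$ at every sufficiently large $m$ to a finite, $m$-independent Boolean check on the initial segment below $\max(n_B, n_C)$, giving the desired $N$. This bookkeeping is the genuine obstacle; once the claim is in place, $\{0, 2, 4, \ldots\}$ is neither eventually everything nor eventually nothing, so no variable-free $A$ can have this truth set, contradicting the assumption and proving the theorem.
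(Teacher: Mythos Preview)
Your proposal is correct and follows the same template as the paper's proof: build an $\IL^-(\J{1},\J{4}_+,\J{5})$-frame on $\omega$ with $R$ the strict order, prove that any fixed point of $\top\rhd\neg p$ would have to have truth set exactly the even numbers, and then derive a contradiction from a stabilization claim for variable-free formulas.

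The only noteworthy difference is the choice of frame. The paper takes, for each $n$, $S_n$ so that $0$ is $S_n$-related to $0$, to every even $k<n$, and to $n-1$ (while for $x\geq 1$ one has $yS_nx$ iff $y\leq x$, as in your frame). Your frame drops the edges from $0$ to the intermediate even points and keeps only $0\,S_n\,0$ and $0\,S_n\,(n-1)$. This makes your unpacking of $m\Vdash\top\rhd\neg A$ shorter (the $x\geq 2$ conditions are absorbed by the $x=1$ condition, as you note), and it also streamlines the stabilization argument: the only $m$-dependent ingredient is the single term ``$m-1\Vdash C$'' coming from $x=0$, which is handled directly by the induction hypothesis on $C$, whereas the paper's extra even edges force a separate case (its Case~2) appealing to the infinitude of even numbers. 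Both frames validate $\J{1}$, $\J{4}_+$, $\J{5}$ and fail $\J{2}_+$ for the same reason (non-transitivity of $S_n$ through the special edge out of $0$), so your simplification is genuine and harmless.
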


\begin{proof}
We define an $\IL^-$-frame $\mathcal{F} = \seq{W, R, \{S_{w}\}_{w \in W}}$ as follows: 
\begin{itemize}
	\item $W: = \omega$;
	\item $x R y : \iff x > y$;
	\item For each $n \in W$, $S_{n}:=\{\seq{x, y} \in W^2 : x, y < n$ and $(x \geq y$ or $(x =0$ and $(y$ is even or $y = n-1))) \}$. 
\end{itemize}










We draw the relations $S_3$ and $S_4$. 
As in the proof of Theorem \ref{c2}, in the case of $x R y$ for $x, y < n$, $x S_n y$ holds, and the corresponding broken lines are omitted in the figure (Figure \ref{Fig5}). 

\begin{figure}[th]
\centering
\begin{tikzpicture}

\draw (0, 6) node {$S_{3}$};

\node [draw, circle] (4) at (0,0) {$4$};
\node [draw, circle] (3) at (0,1.2) {$3$};
\node [draw, circle] (2) at (0,2.4) {$2$};
\node [draw, circle] (1) at (0,3.6) {$1$};
\node [draw, circle] (0) at (0,4.8) {$0$};

\draw [thick, ->] (4)--(3);
\draw [thick, ->] (3)--(2);
\draw [thick, ->] (2)--(1);
\draw [thick, ->] (1)--(0);
\draw [thick, ->] (0, -1)--(4);

\draw [thick, ->, dashed] (-0.2, 5) arc (45:315:0.3);
\draw [thick, ->, dashed] (-0.2, 3.8) arc (45:315:0.3);
\draw [thick, ->, dashed] (-0.2, 2.6) arc (45:315:0.3);
\draw [thick, ->, dashed] (0) to [out=-45, in=45] (2);

\draw (5, 6) node {$S_{4}$};

\node [draw, circle] (4') at (5,0) {$4$};
\node [draw, circle] (3') at (5,1.2) {$3$};
\node [draw, circle] (2') at (5,2.4) {$2$};
\node [draw, circle] (1') at (5,3.6) {$1$};
\node [draw, circle] (0') at (5,4.8) {$0$};

\draw [thick, ->] (4')--(3');
\draw [thick, ->] (3')--(2');
\draw [thick, ->] (2')--(1');
\draw [thick, ->] (1')--(0');
\draw [thick, ->] (5, -1)--(4');

\draw [thick, ->, dashed] (4.8, 5) arc (45:315:0.3);
\draw [thick, ->, dashed] (4.8, 3.8) arc (45:315:0.3);
\draw [thick, ->, dashed] (4.8, 2.6) arc (45:315:0.3);
\draw [thick, ->, dashed] (4.8, 1.4) arc (45:315:0.3);
\draw [thick, ->, dashed] (0') to [out=-45, in=45] (2');
\draw [thick, ->, dashed] (0') to [out=-30, in=30] (3');

\end{tikzpicture}
\caption{A counter model of FPP for $\IL^-(\J{1}, \J{4}_+, \J{5})$}\label{Fig5}
\end{figure}

Then $\IL^{-}(\J{1}, \J{4}_{+}, \J{5})$ is valid in $\mathcal{F}$. 
Let $\Vdash$ be an arbitrary satisfaction relation on $\mathcal{F}$. 

\begin{cl}\label{145cl1}
For any formula $A$ with $v(A) = \emptyset$, there exists an $n \in W$ such that
\[
	\forall m \geq n\, (m \Vdash A) \ \text{or} \ \forall m \geq n\, (m \nVdash A).
\]
\end{cl}

\begin{proof}
This is proved by induction on the construction of $A$.
We prove only the case of $A \equiv B \rhd C$. 
We distinguish the following three cases.
\begin{itemize}
\item (Case 1): There exists an $n > 0$ such that $n \Vdash B$ and for all $k \leq n$, $k \nVdash C$. 
Let $m \geq n+1$. 
Then $m R n$ and $n \Vdash B$. 
Also, for any $k \in W$, if $nS_{m} k$, then $k \leq n$ because $n \neq 0$. 
Therefore $k \nVdash C$. 
Thus, $m \nVdash B \rhd C$.

\item (Case 2): $0 \Vdash B$ and for all even numbers $k$, $k \nVdash C$. 
By induction hypothesis, there exists an $n_{0} \in W$ such that $\forall m \geq n_{0}\, (m \Vdash C)$ or $\forall m \geq n_{0}\, (m \nVdash C)$. 
Since there are infinitely many even numbers $k \in W$ such that $k \nVdash C$, 
we obtain $\forall m \geq n_{0}\, (m \nVdash C)$. 
Then, for any $m \geq n_{0}+1$, $m R 0$ and $0 \Vdash B$. 
Let $k \in W$ be such that $0 S_{m} k$. 
Then $k$ is even or $k = m-1$ by the definition of $S_{m}$. 
By our supposition, if $k$ is even, then $k \nVdash C$. 
If $k = m-1$, then $m-1 \nVdash C$ because $m-1 \geq n_{0}$. 
Therefore, in either case, $k \nVdash C$. 
Thus $m \nVdash B \rhd C$. 

\item (Case 3): Otherwise: Then, the following conditions (I) and (II) are fulfilled. 
\begin{itemize}
	\item[(I)] For any $n > 0$, if $n \Vdash B$, then there exists a $k \in W$ such that $k \leq n$ and $k \Vdash C$. 
	\item[(II)] If $0 \Vdash B$, then there exists an even number $k \in W$ such that $k \Vdash C$. 
\end{itemize}

We distinguish the following two cases. 
\begin{itemize}
	\item $0 \nVdash B$: Let $m \geq 0$. 
For any $n \in W$ satisfying $m R n$ and $n \Vdash B$, since $n \neq 0$, there exists a $k \leq n$ such that $k \Vdash C$ by the condition (I). 
Since $n S_{m} k$, we obtain $m \Vdash B \rhd C$.

	\item $0 \Vdash B$: By the condition (II), there exists an even number $k$ such that $k \Vdash C$. 
Let $m \geq k+1$ and let $n \in W$ be such that $m R n$ and $n \Vdash B$. 
If $n \neq 0$, then there exists a $k' \leq n$ such that $k' \Vdash C$ and $n S_{m}k'$ by the condition (I). 
If $n =0$, then since $k$ is even and $k < m$, we obtain $n S_{m} k$ and $k \Vdash C$. Therefore $m \Vdash B \rhd C$. 
\end{itemize}
\end{itemize}
\end{proof}

We suppose, towards a contradiction, that there exists a formula $A$ such that $v(A) = \emptyset$ and $\IL^{-}(\J{1}, \J{4}_{+}, \J{5}) \vdash A \leftrightarrow \top \rhd \lnot A$. 
Then $A \leftrightarrow \top \rhd \lnot A$ is valid in $\mathcal{F}$ because so is $\IL^{-}(\J{1}, \J{4}_{+}, \J{5})$. 
Then the following claim holds. 

\begin{cl}
For any $n \in W$, $n$ even if and only if $n \Vdash A$. 
\end{cl}

\begin{proof}
We prove by induction on $n$. 
For $n = 0$, obviously $0 \Vdash A$. 
Suppose $n > 0$ and the claim holds for any natural number less than $n$. 

$(\Leftarrow)$: Assume that $n$ is odd. 
Then $n R 0$. 
For any $k \in W$ which satisfies $0 S_{n} k$, since $n$ is odd, $k$ is even and $k < n$. 
By induction hypothesis, $k \Vdash A$. 
Thus, we obtain $n \nVdash \top \rhd \lnot A$ and hence, $n \nVdash A$. 

$(\Rightarrow)$: Assume that $n$ is even. 
Let $m \in W$ be such that $n R m$. 
We distinguish the following three cases.
\begin{itemize}
		\item $m=0$: Then $0 S_{n} n-1$. 
Since $n-1$ is odd, $n-1 \Vdash \lnot A$ by induction hypothesis.

		\item $m$ is even and $m \neq 0$: Then $m S_{n} m-1$. 
Since $m-1$ is odd, $m-1 \Vdash \lnot A$ by induction hypothesis.

		\item $m$ is odd: Then $m S_{n} m$. 
Since $m$ is odd, $m \Vdash \lnot A$ by induction hypothesis.
\end{itemize}
In any case, there exists a $w \in W$ such that $m S_{n} w$ and $w \Vdash \lnot A$. 
Therefore, we obtain $n \Vdash \top \rhd \lnot A$ and hence, $n \Vdash A$.
\end{proof}

This contradictions Claim \ref{145cl1}. 
Therefore, there is no formula $A$ such that $v(A) = \emptyset$ and $\IL^{-}(\J{1}, \J{4}_{+}, \J{5}) \nvdash A \leftrightarrow \top \rhd \lnot A$. 
\end{proof}

\begin{cor}
Every sublogic of $\IL^{-}(\J{1}, \J{4}_{+}, \J{5})$ does not have FPP. 
Furthermore, if $\IL^{-}(\J{4}_{+}) \subseteq L \subseteq \IL^{-}(\J{1}, \J{4}_{+}, \J{5})$, then $L$ does not have CIP. 
\end{cor}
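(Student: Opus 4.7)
The plan is to deduce the corollary immediately from Theorem \ref{c1} together with Lemma \ref{FPP-CU}. The first statement only requires a simple monotonicity observation: the formula $\top \rhd \lnot p$ is modalized in $p$ (the variable $p$ occurs only in the scope of $\rhd$), and $v(\top \rhd \lnot p) \setminus \{p\} = \emptyset$. So if some sublogic $L \subseteq \IL^-(\J{1}, \J{4}_+, \J{5})$ had FPP, there would exist a formula $A$ with $v(A) = \emptyset$ such that $L \vdash A \leftrightarrow (\top \rhd \lnot A)$. Since $L$ is a sublogic of $\IL^-(\J{1}, \J{4}_+, \J{5})$, the same equivalence would be provable in $\IL^-(\J{1}, \J{4}_+, \J{5})$, contradicting Theorem \ref{c1}.

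For the CIP part, I would invoke Lemma \ref{FPP-CU}. Any logic $L$ with $\IL^-(\J{4}_+) \subseteq L \subseteq \IL^-(\J{1}, \J{4}_+, \J{5})$ is in particular an extension of $\IL^-(\J{4}_+)$ that is closed under substituting a formula for a propositional variable (this closure is standard for the logics under consideration, since they are axiomatized by schemata and closed under Modus Ponens, Necessitation, $\R{1}$ and $\R{2}$). Hence, by Lemma \ref{FPP-CU}, CIP for $L$ would entail FPP for $L$. Combining this with the first part of the corollary, which rules out FPP for every such $L$, we conclude that $L$ does not have CIP.

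There is essentially no obstacle here: the theorem providing the counter-model (Theorem \ref{c1}) and the Smory\'nski-style implication ``CIP + UFP $\Rightarrow$ FPP'' (Lemma \ref{FPP-CU}, which internally uses Theorem \ref{UFP} for the UFP ingredient) have already done all the work. The only point meriting a line of comment in the write-up is the verification that the sublogics in the stated range contain $\IL^-(\J{4}_+)$, so that Lemma \ref{FPP-CU} applies; this is exactly the reason the hypothesis $\IL^-(\J{4}_+) \subseteq L$ appears in the statement.
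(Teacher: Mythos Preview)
Your proposal is correct and follows exactly the same approach as the paper: invoke Theorem \ref{c1} for the failure of FPP in every sublogic of $\IL^-(\J{1}, \J{4}_+, \J{5})$, and then apply Lemma \ref{FPP-CU} (the contrapositive of ``CIP $\Rightarrow$ FPP'' for extensions of $\IL^-(\J{4}_+)$) to conclude the failure of CIP. The paper's proof is just these two sentences; your additional remarks on monotonicity and closure under substitution are correct elaborations of details the paper leaves implicit.
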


\begin{proof}
By Theorem \ref{c1}, every sublogic of $\IL^{-}(\J{1}, \J{4}_{+}, \J{5})$ does not have FPP. 
By Lemma \ref{FPP-CU}, every logic $L$ with $\IL^{-}(\J{4}_{+}) \subseteq L \subseteq \IL^{-}(\J{1}, \J{4}_{+}, \J{5})$ does not have CIP. 
\end{proof}

\section{Concluding remarks}

In this paper, we provided a complete description of twelve sublogics of $\IL$ concerning UFP, FPP and CIP. 
In particular, for these sublogics $L$, we proved that $L$ has FPP if and only if $L$ contains $\IL^-(\J{2}_+, \J{5})$. 
On the other hand, there are many other logics between $\IL^-$ and $\IL$. 
For instance, Kurahashi and Okawa \cite{KO20} introduced eight sublogics such as $\IL^-(\J{2}, \J{4}_+, \J{5})$ that are not in Figure \ref{Fig1}, and proved that these eight logics are not complete with respect to regular Veltman semantics but complete with respect to generalized Veltman semantics. 
Then it is natural to investigate a sharper threshold for FPP in a larger class of sublogics. 
Then for example, we propose a question if $\J{2}_+$ can be weakened by $\J{2}$ in the statement of Corollary \ref{Cor:FPP}. 

\begin{prob}
Does the logic $\IL^-(\J{2}, \J{4}_+, \J{5})$ have FPP?
\end{prob}

In our proofs of Theorem \ref{T3-1}, Theorem \ref{FPrhd} and Theorem \ref{Thm:lFPP}, the use of the axiom scheme $\J{5}$ seems inevitable. 
In fact, $\CL$ ($= \IL^-(\J{1}, \J{2}_+)$) fails to have $\ell$FPP. 
Thus we propose a question whether $\J{5}$ is necessary or not for $\ell$FPP and FPP. 
For this question, we keep in mind the fact that an extension $L$ of $\mathbf{K4}$ proves the axiom scheme $\G{3}$ if $L$ has FPP.

\begin{prob}\leavevmode
\begin{enumerate}
	\item For every extension $L$ of $\IL^-(\J{2}_+)$, if $L$ has FPP, then does $L$ prove $\J{5}$?
	\item For every extension $L$ of $\IL^-(\J{4})$, if $L$ has $\ell$FPP, then does $L$ prove $\J{5}$?
\end{enumerate}
\end{prob}

\bibliographystyle{plain}
\bibliography{ref}

\end{document}